\documentclass[hidelinks,onefignum,onetabnum]{siamart251216}

\usepackage{lipsum}
\usepackage{amsfonts}
\usepackage{graphicx}
\usepackage{epstopdf}
\usepackage{algorithmic}
\usepackage{amssymb}
\ifpdf
  \DeclareGraphicsExtensions{.eps,.pdf,.png,.jpg}
\else
  \DeclareGraphicsExtensions{.eps}
\fi

\newsiamremark{remark}{Remark}
\newsiamremark{hypothesis}{Hypothesis}
\crefname{hypothesis}{Hypothesis}{Hypotheses}
\newsiamthm{claim}{Claim}
\newsiamremark{fact}{Fact}
\newsiamthm{ass}{Assumption}
\crefname{fact}{Fact}{Facts}

\headers{Complexity of an ITSQP Algorithm}{Michael J. O'Neill and Aoji Tang}

\title{Complexity of an inexact stochastic SQP algorithm for equality constrained optimization \thanks{\funding{This work was partially funded by the Office of Naval Research under award N00014-24-1-2638.}}}

\author {Michael J. O'Neill\thanks{Department of Statistics and Operations Research, The University of North Carolina at Chapel Hill, Chapel Hill, NC 
  (\email{mikeoneill@unc.edu}).}
\and Aoji Tang\thanks{Department of Statistics and Operations Research, The University of North Carolina at Chapel Hill, Chapel Hill, NC 
  (\email{ajtang@unc.edu}).}}

\usepackage{amsopn}

\newtheorem{thm}{Theorem}[section]
\newtheorem{lem}[thm]{Lemma}
\newtheorem{cor}[thm]{Corollary}

\newtheorem{ttt}{Termination Test}

\renewcommand{\leq}{\leqslant}

\newcommand{\eps}{\varepsilon}

\newcommand{\ie}{\emph{i.e.,}}

\newcommand{\uktrue}{u_k^\text{true}}
\newcommand{\dktrue}{d_k^\text{true}}
\newcommand{\yktrue}{y_k^\text{true}}
\newcommand{\wktrue}{w_k^\text{true}}
\newcommand{\ukt}{u_{k,t}}

\newcommand{\taum}{\tau_{\min}}

\let\ga=\alpha \let\gb=\beta   
             \let\go=\omega     
\let\gx=\chi

\newcommand{\cN}{\mathcal{N}}

\newcommand{\bE}{\mathbb{E}}

\newcommand{\bN}{\mathbb{N}}
\newcommand{\bR}{\mathbb{R}}
\newcommand{\bS}{\mathbb{S}}

\DeclareMathOperator{\argmin}{argmin}

\ifpdf
\hypersetup{
  pdftitle={Complexity of an inexact stochastic SQP algorithm for equality constrained optimization},
  pdfauthor={Michael J. O'Neill and Aoji Tang}
}
\fi

\usepackage{enumitem}
\usepackage{multirow}

\newcommand{\mo}[1]{#1}
\newcommand{\at}[1]{#1}

\begin{document}

\maketitle

\begin{abstract}
In this paper, we consider nonlinear optimization problems with a stochastic objective function and deterministic equality constraints. We propose an inexact two-stepsize stochastic sequential quadratic programming (SQP) algorithm and analyze its worst-case complexity under mild assumptions. The method utilizes a step decomposition strategy and handles stochastic gradient estimates by assigning different stepsizes to different components of the search direction. We establish the first known $\mathcal{O}(\epsilon_c^{-2})$ worst-case complexity with respect to the infeasibility measure when no constraint qualification is assumed and a worst-case complexity of $\mathcal{O}(\epsilon_c^{-1})$ when LICQ holds, matching the best known result in the literature. In addition, under mild conditions, our method achieves the optimal $\mathcal{O}(\epsilon_L^{-4})$ complexity with respect to the gradient of the Lagrangian regardless of constraint qualifications. Our results provide the first complexity guarantees for the popular Byrd-Omojukun step decomposition strategy and verify its theoretical efficacy. Numerical experiments show that our algorithm has a superior infeasibility convergence performance and a competitive KKT convergence rate compared to the state-of-the-art stochastic SQP method.
\end{abstract}

\begin{keywords}
constrained optimization, sequential quadratic programming, stochastic optimization, rank deficient constraint Jacobians, worst case complexity, inexact subproblem solver
\end{keywords}

\begin{MSCcodes}
 	49M37, 65K05, 65K10, 90C15, 90C30, 90C55
\end{MSCcodes}

\section{Introduction}

In this paper, we consider the equality-constrained optimization problem as follows:
\begin{equation}\label{eq: opt problem}
 \min _{x\in \bR^n} f(x) = \bE[F(x, \omega)] \quad \text{s.t.} \quad c(x) = 0,
\end{equation}
where $f$ is the expectation of $F:\bR^n\times \Omega \to \bR$, under the random variable $\omega$ with the associated probability space $(\Omega, \mathcal{F,P})$, and $c:\bR^n\to \bR^m$ is the constraint function.

Such problems emerge naturally in scenarios where system dynamics are strictly governed by physical laws. For instance, in stochastic optimal power flow problems, the objective is to minimize expected generation costs under highly stochastic renewable energy inputs, while the power flow balance must strictly hold as deterministic equality constraints dictated by Kirchhoff's circuit laws. Similarly, in Physics-Informed Deep Learning (PINN) problems \cite{chen2024physics, lu2021physics}, boundary and initial conditions are often enforced as hard deterministic constraints, while \mo{only stochastic gradient information} is available \mo{for computational efficiency}. Such formulations are also prevalent in science and engineering, with prominent applications in optimal control \cite{betts2010practical, teo2021applied} and PDE-constrained optimization \cite{kouri2018inexact}.

There are two general types of methods to solve our desired problem. Stochastic augmented Lagrangian methods \cite{bollapragada2023adaptive, jiang2022stochastic, li2024stochastic, shi2025momentum, xu2020primal} penalize the constraints in the objective and solve the corresponding unconstrained subproblem to get a \mo{primal update}. \mo{These methods} can benefit from simpler forms of constraints \mo{(e.g. linear constraints)} and \mo{naturally} generalize to inequality constrained problems. Meanwhile, stochastic Sequential Quadratic Programming (SQP) methods are better at coping with more complicated nonlinear constraints. SQP methods compute the \mo{iterates} via a sequence of quadratic programming subproblems which preserve linearized \mo{feasibility}.

Existing stochastic SQP frameworks \cite{berahas2022adaptive, berahas2021sequential, berahas2023accelerating, berahas2025optimistic, berahas2025sequential, curtis2024stochastic, na2023adaptive, o2024two} rely heavily on the Linear Independence Constraint Qualification (LICQ) at every iteration. However, this assumption often fails in high-dimensional applications. For example, in PINN problems, enforcing boundary conditions over dense collocation points often leads to redundant constraints, inherently causing rank-deficient constraint Jacobians. When LICQ fails, the KKT system becomes singular. This singularity disrupts standard Newton-type step computations and can cause the merit parameter to vanish prematurely, preventing the algorithm from adequately optimizing the objective function.

To safeguard against these degenerate constraint manifolds, \mo{one popular approach is to employ the step decomposition strategy of Byrd-Omojokun} \cite{omojokun1989trust}. The Byrd-Omojokun method computes two orthogonal components: the normal component (oriented toward feasibility), \mo{via a trust-region subproblem}, and the tangential component (optimizing the objective within the null space), \mo{through the solution of a linear system of equations. These two components are then combined to form the full step direction. This strategy has proven effective for stochastic SQP methods in recent work \cite{berahas2024stochastic}.}

\subsection{Contributions}

Given the critical absence of complexity results \mo{under mild assumptions}, we propose and analyze an Inexact Two-Stepsize SQP (ITSQP) algorithm. By employing an $l_2$ merit function, we comprehensively analyze the worst-case complexity results under three specific behaviors of the merit parameter sequence. We prove that our algorithm converges to stationary points (in expectation) in both infeasibility and first-order measures. 


Our specific contributions \mo{in terms of} worst-case complexity \mo{results} are as follows:

\begin{itemize}
    \item \textbf{Without Constraint Qualifications:} To achieve an $\epsilon_c$-feasible point, our algorithm requires at most $\mathcal{O}(\epsilon_c^{-2})$ iterations \mo{regardless of whether the} merit parameter sequence has a positive lower bound. To our knowledge, this is the \emph{first known} complexity result for stochastic SQP algorithm under such mild, rank-deficient assumptions.
    \item \textbf{With LICQ:} We prove that the Byrd-Omojokun safeguard does not impede performance. When everywhere LICQ holds, the algorithm perfectly recovers an improved $\mathcal{O}(\epsilon_c^{-1})$ worst-case complexity for the infeasibility measure. This result matches the best known complexity in \cite{o2024two}, \mo{where LICQ is assumed}.
    \item \textbf{First-order Optimality:} When the merit parameter sequence is bounded, our algorithm requires at most $\mathcal{O}(\epsilon_{L}^{-4})$ iterations to ensure that the first-order optimality condition \mo{falls below the tolerance} $\epsilon_L$, which matches the \mo{optimal complexity bounds} for stochastic gradient methods \cite{arjevani2023lower}.
\end{itemize}

Furthermore, we exploit the use of inexact iterative solvers in the tangential subproblems and prove \mo{equivalent} convergence and complexity results under mild assumptions on these solvers. Numerical experiments demonstrate that our ITSQP algorithm yields substantial enhancements in both feasibility convergence speed and accuracy compared to state-of-the-art exact stochastic SQP methods.

We conclude this section with \cref{tab: complexities}, which contains the complexity results of different algorithms. Note that the \mo{approximate convergence metrics} vary under different assumptions.

\begin{table}[htbp]
    \centering
    \begin{tabular}{|c|c|c|c|c|}
    \hline
     Algorithm  &  Assumption & $1^\text{st}$-order Comp. & Infeas. Comp. \\
     \hline
         SPD\cite{jin2022stochastic} &  $x_0$ feasible, CQ & $\mathcal{O}(\epsilon_L^{-5})$ & $\mathcal{O}(\epsilon_c^{-5})^\dagger $\\
    \hline
        MLALM\cite{shi2025momentum} & mean-squared smoothness, CQ & {$\mathcal{O}(\epsilon_L^{-4})$} & {$\mathcal{O}(\epsilon_c^{-4})^\dagger$}
        \\
    \hline
        SSQP\cite{berahas2021sequential} &  LICQ &  $\mathcal{O}(\epsilon_L^{-4})$ & $\mathcal{O}(\epsilon_c^{-2})$ \\
    \hline 
        SQP-AL\cite{na2023adaptive} & LICQ & $\mathcal{O}(\epsilon_L^{-4})$ & $\mathcal{O}(\epsilon_c^{-4})$ \\
    \hline
        TSSQP\cite{o2024two} &  LICQ &  $\mathcal{O}(\epsilon_L^{-4})$ & $\mathcal{O}(\epsilon_c^{-1})$ \\
    \hline
        \multirow{3}{*}{\shortstack{ITSQP \\ \small (this paper)}}
         & \textbf{No CQ assumed}, $\exists \ \taum$
          & $\mathcal{O}(\epsilon_L^{-4})$ 
          & $\mathcal{O}(\epsilon_c^{-2})^\ddagger$ \\
        \cline{2-4}
        & \textbf{No CQ assumed}, $\nexists \ \taum$
          & N/A
          & $\mathcal{O}(\epsilon_c^{-2})^\ddagger$ \\
        \cline{2-4}
         & LICQ
          & $\mathcal{O}(\epsilon_L^{-4})$ 
          & {$ \mathcal{O}(\epsilon_c^{-1})$} \\
    \hline
    \end{tabular}
    \caption{Complexity results of different algorithms solving \cref{eq: opt problem}. The default first-order measure is the norm of gradient of the Lagrangian function. The default infeasibility measure is the norm of constraint violation. $\dagger$: The stochastic augmented Lagrangian methods use a combined feasibility and optimality measure. $\ddagger$: We use a different infeasibility measure when no CQ is assumed, \mo{see \cref{cor: Jkck2 complexity}. Here, $\taum$ represents a lower bound on the merit parameter seqeunce.}}
    \label{tab: complexities}
\end{table}

\subsection{Organization}
The assumptions and our algorithm are presented in \Cref{sec: problem statement and algorithm description}. \Cref{sec: convergence analysis} details the worst-case complexity proofs across the three \mo{aforementioned} cases. \Cref{sec: inexact algs} discusses the implementation of inexact subproblem solutions and proves complexity results \mo{for the inexact method}. \mo{Preliminary} experiments are presented in \Cref{sec: numerical experiments} and \mo{we provide a brief discussion of our work} in \Cref{sec: conclusion}.

\subsection{Notation}
The set of real numbers is denoted as $\bR$, the set of positive real numbers is denoted as $\bR_{>0}$, the set of natural numbers is denoted as $\bN$, the set of $n$-dimensional real vectors is denoted as $\bR^n$, the set of $m\text{-by-}n$ dimensional real matrices is denoted as $\bR^{m\times n}$, and the set of $n$-by-$n$ dimensional symmetric matrices is denoted as $\bS^n$. \mo{We use $\|\cdot\|$ to denote the Euclidean norm.} As in \cref{eq: opt problem}, $f$ and $c$ denote the objective and constraint functions, respectively. $c_i$ denotes the $i$-th component of constraint $c$. Given $A\in \bR^{m\times n}$, the null space of $A$ is denoted as $\text{Null} (A)$, and the range space of $A^T$ is denoted as $\text{Range} (A^T)$. \mo{We denote the Moore-Penrose psuedoinverse of a matrix $A$ as $A^+$.}

Our algorithm is iterative in the sense that, given a starting point $x_0\in \bR^n$, it generates a sequence of iterations $\{x_k\}_{k\ge0}$ with $x_k\in \bR^n$. At iterate $k$, we denote $c_k = c(x_k)$ and $J_k = \nabla c(x_k)^T$. Given $J_k\in \bR^{m\times n}$, $Z_k$ denotes a matrix whose columns form an orthonormal basis for $\text{Null} (J_k)$.

\section{Problem Statement and Algorithm Description}\label{sec: problem statement and algorithm description}

We make the following assumption about the problem \cref{eq: opt problem} throughout the paper. 

\begin{ass}\label{assumption1}
    Let $\gx \subset \bR^n$ be an open convex set containing the sequence of iterations $\{x_k\}$ generated by our algorithm. The objective function $f : \bR^n\to \bR$ is continuously differentiable and bounded over $\gx$ and its gradient $\nabla f: \bR^n \to \bR$ is Lipschitz continuous with constant $L > 0$ and bounded over $\gx$. The constraint function $c: \bR^n\to \bR^m$ is continuously differentiable and bounded over $\gx$ and its Jacobian function $J:= \nabla c^T : \bR^n \to \bR^{m\times n}$ is bounded over $\gx$ and Lipschitz continuous with constant $\Gamma > 0$ (with respect to $\|\cdot\|$) over $\gx$.
\end{ass}
    
From Assumption \ref{assumption1}, it follows that there exist positive real numbers $(f_{\inf}, $ $\kappa_g, \kappa_c, \kappa_J)$ $ \in \bR \times \bR_{>0} \times \bR_{>0} \times \bR_{>0}$, such that 
$$
f_{\inf} \le f(x_k) , \,\,\|\nabla f(x_k)\|\le \kappa_g, \,\, \|c_k\|\le \kappa_c, \,\,\text{and} \,\, \|J_k\| \le \kappa_J \,\, \text{for all} \,\, k\in \bN.
$$
Assumption \ref{assumption1} are common assumptions on the smoothness of the functions. As for the boundedness of \mo{the} function and gradient, it is not ideal to assume \mo{these remain bounded in} a stochastic setting, since we can only provide convergence in expectation. \mo{However, such assumptions are essential in {\it deterministic} constrained optimization and are common in the stochastic constrained literature \cite{berahas2024stochastic,berahas2021sequential,o2024two,shi2025momentum}}.

Now we define the Lagrangian function $L:\bR^n \times \bR^m \to \bR$ corresponding to the problem \cref{eq: opt problem} by $L(x, y) = f(x) + c(x)^Ty$. The standard KKT condition under the linear independence constraint qualification \mo{is} 
\begin{equation}\label{eq: KKT condition of Lagrangian}
    \begin{bmatrix}
    \nabla_xL(x, y) \\ \nabla_y L(x,y)
\end{bmatrix} = \begin{bmatrix}
    \nabla f(x) + J(x)^Ty \\ c(x)
\end{bmatrix} = 0.
\end{equation}
However, without any kind of constraint qualification, it is possible for the KKT system to \mo{be} degenerate due to the possible rank deficiency of the constraint Jacobian matrix $J(x)$. Then \cref{eq: KKT condition of Lagrangian} might not necessarily be satisfied at a solution to the original problem \cref{eq: opt problem}, or the problem \cref{eq: opt problem} might be infeasible itself. In the latter case, we can only hope that some other measure of constraint violation converges to zero.

To handle this, we employ the $l_2$-norm of constraints, denoted by $\varphi (x) = \|c(x)\|$, as our infeasibility measure. We say that a point $x\in \bR^n$ is stationary with respect to $\varphi$, if and only if either:
\begin{enumerate}
    \item $\varphi(x) = 0$, or
    \item $\varphi(x) \neq 0$ and $\nabla \varphi (x) = \frac{J(x)^Tc(x)}{\|c(x)\|} = 0$.
\end{enumerate}
We acknowledge that this approach is a compromise, but it is common practice when no constraint qualification is assumed, as in \cite{berahas2024stochastic}. In our analysis, we mainly \mo{work with} $\|J(x)^Tc(x)\|$ as \mo{our} infeasibility measure. This is because $\varphi(x) = 0 $ will also lead to $\|J(x)^Tc(x)\|=0$, and $\varphi$ can be recovered from $\|J(x)^Tc(x)\|$ when the LICQ \mo{holds at} every iteration. \mo{When considering first-order stationarity, we use the gradient of the Lagrangian, $\|\nabla f(x)+J(x)^Ty\|$, with a properly chosen dual variable $y$.}

\subsection{Step computation and algorithm}

\mo{Standard stochastic SQP algorithms compute} the search direction via a sequence of quadratic programming subproblems which \mo{satisfy linearized feasibility}. At iteration $k$, the subproblem is formed as
\begin{equation}\label{eq: stochastic quadratic subproblem}
    \begin{aligned}
    \mo{d_k} = \underset{d\in\bR^n}{\argmin}& \quad  f_k + g_k^Td +\frac{1}{2}d^TH_kd \\ \text{s.t.}& \quad c_k + J_k d = 0,
\end{aligned}
\end{equation}
where $H_k$ is \mo{a chosen matrix} and $g_k$ is a stochastic estimate of $\nabla f(x_k)$.

For a general quadratic programming problem in the form \cref{eq: stochastic quadratic subproblem}, there are numerous ways to \mo{compute the solution} when $J_k$ has full row rank, such as factorization methods and conjugate gradient methods \cite{nocedal2006numerical}. However, \mo{as we do not assume full rank of the Jacobians, rather than solving \eqref{eq: stochastic quadratic subproblem} directly for $d_k$, we \mo{employ} a step decomposition strategy known as the Byrd-Omojokun method \cite{omojokun1989trust}}. 

At each iteration, we first compute the ``normal" component $v_k\in \text{Range}(J_k^T)$ of the search direction to minimize linearized constraint violation over a trust-region: 
\begin{equation}\label{eq: subproblem v_k}
    \min_{v\in\bR^n} \frac{1}{2}\|c_k+J_kv\|^2 \quad \text{s.t.} \quad \|v\| \le \go \|J_k^Tc_k\|,
\end{equation}
where $\go >0$ is a deterministic parameter to control the size of the normal component. Rather than using an adaptive trust-region radius sequence $\{\Delta_k\}$ (like in \cite{fang2024fully}), we intentionally link $v_k$ with our infeasibility measure $\|J_k^Tc_k\|$. 


Solving \cref{eq: subproblem v_k} exactly may be expensive, but fortunately we only \mo{require that} the normal step $v_k$ \mo{satisfies} the following Cauchy decrease condition: 
\begin{equation}\label{eq: cauchy decreasee condition}
    \|c_k\| - \|c_k+J_kv_k\| \ge \epsilon_v (\|c_k\| - \|c_k+\alpha_k^C J_kv_k^C\|),
\end{equation} for some $\epsilon_v \in (0, 1]$. Here, $v_k^C = -J_k^Tc_k$, and $\alpha_k^C$ is the solution to the problem $\min_\alpha \frac{1}{2}\|c_k + \alpha J_kv_k^C\|^2$, subject to $\alpha \le \go$. Since this condition can be satisfied simply by choosing the Cauchy step $v_k \leftarrow v_k^C$, the normal component can be computed at a relatively low cost. To obtain a more accurate solution, one can apply the linear conjugate gradient method with Steihaug stopping conditions \cite{Steihaug1983} to find a solution that satisfies the Cauchy decrease condition. \mo{An important property of \cref{eq: subproblem v_k} is that the normal component $v_k$ is independent of the stochasticity introduced by $g_k$}. This property \mo{is fundamental to our} two-stepsize scheme and lays the theoretical foundation for our superior infeasibility complexity compared to that of \cite{berahas2024stochastic}.

After computing the normal component $v_k$, our algorithm computes the tangential component $u_k$ through an \mo{additional} constrained subproblem, \mo{involving the stochastic gradient estimate $g_k$}. We start by introducing the tangential subproblem with an assumption on $H_k$, which assumes \mo{that $H_k$ is} positive-definite in the null space of $J_k$ \mo{and is chosen independently of $g_k$}.
\begin{ass}\label{ass: assumption on Hk}
    For all $k\in \bN$, the matrix $H_k\in \bS^n$ is chosen independently from $g_k$, the sequence $\{H_k\}$ is bounded in norm by $\kappa_H$, and there exists $\zeta \in\bR_{>0}$, such that $u^TH_ku\ge \zeta \|u\|^2$ for all $u\in \text{Null}(J_k).$
\end{ass}

The subproblem to compute the tangential component $u_k$ is formulated as 
\begin{equation}\label{eq: subproblem u_k}
    \min_{u\in \bR^n} (g_k+H_kv_k)^Tu + \frac{1}{2}u^TH_ku \quad \text{ s.t.} \quad J_ku=0.
\end{equation}
Under Assumption \ref{ass: assumption on Hk}, $u_k$ is unique and can be obtained by solving the corresponding Newton system
\begin{equation}\label{eq: subproblem u_k's KKT system}
    \begin{bmatrix}
        H_k\quad J_k^T \\
        J_k \quad 0
    \end{bmatrix}  \begin{bmatrix}
        u_k \\ y_k
    \end{bmatrix}= -\begin{bmatrix}
        g_k +H_kv_k \\ 0
    \end{bmatrix},
\end{equation}
\mo{although the Lagrange multipliers $y_k$ may} have multiple solutions \mo{due to rank deficiency}. An exact solution to this subproblem can be obtained through \mo{factorization methods}, while inexact solutions can be obtained with iterative solvers. We will later discuss the \mo{possibility of inexactly computing $u_k$} in \Cref{sec: inexact algs}.

\mo{Throughout, we assume the following standard assumptions about the stochastic gradient estimate $g_k$}.
\begin{ass}\label{ass2}
    For all $k \in \bN,$ the stochastic gradient estimate $g_k\in \bR^n$ is an unbiased estimator of $\nabla f(x_k)$, \ie $\ \bE_k[g_k] = \nabla f(x_k)$, where $\bE_k[\cdot]$ denotes the conditional expectation up to iteration $x_k$. In addition, there exists $M\in \bR_{>0}$ such that $\bE_k[ \|g_k - \nabla f(x_k)\|^2] \le M$.
\end{ass}


After computing the two orthogonal components, a natural way to compute the step direction $d_k$ is to simply add them together, as in \cite{berahas2024stochastic}. However, our algorithm first rescales the tangential part $u_k$ \mo{by} some pre-defined parameter sequence $\{\beta_k\} > 0$ and then \mo{combines the components} to form $d_k$ instead. Formally, \mo{we set}
\begin{equation}\label{eq: compute dk=bkuk+vk}
    d_k = \beta_k u_k + v_k.
\end{equation}
The next iteration $x_{k+1}$ is then produced by $x_{k+1} = x_k +\alpha_k d_k$, where $\alpha_k$ is the second stepsize. The role of $\beta_k$ is crucial in our analysis. Simply put, $\{\beta_k\}$ controls the variance introduced by the stochastic gradient estimates $g_k$, \mo{but does not impact convergence in the constraints, which is driven by $v_k$}. Now we can state a general algorithm framework, where the subproblem \cref{eq: subproblem u_k} can be solved exactly or inexactly.
\begin{algorithm}
\caption{Two-Stepsize Stochastic SQP Algorithm Framework}
\label{algorithm1}
\begin{algorithmic}
\STATE Initialize $x_0 \in \bR^n$.
\FOR{$k=0, 1, 2, ..., K-1$}
    \STATE Compute the normal component $v_k$ with \cref{eq: subproblem v_k}.
    \STATE Compute a stochastic gradient estimate $g_k$.
    \STATE Compute the tangential component $u_k$ exactly by \cref{eq: subproblem u_k} or inexactly by \cref{eq: define rho_k and r_k}.
    \STATE Choose $\beta_k > 0$.
    \STATE Set $d_k \leftarrow \beta_k u_k + v_k$.
    \STATE Choose $\alpha_k > 0$.
    \STATE Set $x_{k+1} \leftarrow x_k + \alpha_k d_k$.
\ENDFOR
\end{algorithmic}
\end{algorithm}

There are various ways to choose the stepsizes $\{\gb_k\}$ and $\{\alpha_k\}$. Since \mo{the purpose of} $\{\gb_k\}$ \mo{is to control} the variance of stochastic gradient estimates, they are essentially equivalent to the stepsize \mo{employed} in stochastic gradient methods. \mo{Thus, in order} to obtain our \mo{desired complexity} result, we set $\gb_k = O(1/\sqrt{K})$ where $K$ is the total number of iterations that we plan to perform, which is a standard choice in the stochastic gradient literature \cite{ghadimi2013stochastic}. For the sake of brevity, we assign $\kappa_\beta\in\bR_{>0}$ as an upper bound such that $\gb_k \le \kappa_\beta$ for all $k\in\bN$.  

On the other hand, $\alpha_k$ can be chosen to be independent of $K$ because we do not require it to control the stochastic error \mo{and it can be viewed as an essentially deterministic step size, which should be inversely proportional to smoothness constants of the problem}. Specifically, we choose $\alpha_k$ from an interval \mo{involving} $\beta_k$ as follows:
\begin{equation}\label{alpha choice: interval}
    \alpha_k \in [\nu, \nu + \theta \beta_k],
\end{equation}
where $\nu \in\bR_{>0} $ \mo{satisfies properties with respect to problem-specific parameters} (see, \cref{eq: define nu}) and $\theta \in\bR_{>0}$ is a user-defined constant. This interval type of $\alpha_k$ is common in stochastic SQP methods, as in \cite{berahas2021sequential} and \cite{o2024two}.

\section{Convergence Analysis}\label{sec: convergence analysis}

To start our analysis, we first define some ``true" variables, which are quantities that would be computed if we used the full gradient $\nabla f(x_k)$ rather than stochastic gradient estimates $g_k$. For instance, let
\begin{equation}\label{eq: true subproblem u_k}
    \uktrue := 
\operatorname*{argmin}_{u\in \bR^n} \, (\nabla f(x_k)+H_kv_k)^Tu + \frac{1}{2}u^TH_ku \quad \text{s.t.} \quad J_ku=0.
\end{equation}
Equivalently, $\uktrue$ can be obtained by solving the ``true" Newton system
\begin{equation}\label{eq: true KKT system}
    \begin{bmatrix}
        H_k\quad J_k^T \\
        J_k \quad 0
    \end{bmatrix}  \begin{bmatrix}
        \uktrue \\ \yktrue
    \end{bmatrix}= -\begin{bmatrix}
        \nabla f(x_k)+H_kv_k \\ 0
    \end{bmatrix}.
\end{equation}
\mo{As mentioned previously, $\yktrue$ may not be uniquely defined by \eqref{eq: true KKT system}. Thus, throughout our analysis, we impose the choice of the least-squares solution, i.e.,
\begin{equation} \label{eq:yktrue}
    \yktrue := -(J_k^T)^+ (\nabla f(x_k) + H_k v_k + H_k \uktrue).
\end{equation}} Since $v_k$ is independent from the stochastic gradient estimate $g_k$, $v_k^{\text{true}}$ is identical to $v_k$. Therefore, $\dktrue = \beta_k\uktrue + v_k$, and \mo{we have the} useful \mo{property}: 
\begin{equation}\label{eq: Jkdktrue = Jkdk}
    J_k\dktrue = J_k(\beta_k \uktrue+v_k) = J_kv_k=J_kd_k.
\end{equation}

\subsection{Merit function, local model and preliminary results}

To study the convergence of SQP methods, a useful tool is the merit function. We employ the $l_2$ merit function $\phi:\bR^n\times \bR_{> 0} \to \bR$ defined by
\begin{equation}\label{eq: merit function}
    \phi(x, \tau) = \tau f(x) + \|c(x)\|
\end{equation}
in our analysis, where $\tau$ is the merit parameter.

We also introduce a local model of the merit function, $l: \bR^n\times\bR_{>0}\times \bR^n \times \bR^n \to \bR:$\begin{equation}\label{eq: local model l}
    l(x, \tau, g, d) = \tau(f(x) +g^Td) +\|c(x)+J(x)d\|.
\end{equation} 
The local model $l(x, \tau, g, d)$ can be seen as an approximation \mo{of $\phi$} near the current point $x$. \mo{As we will see, $l$ serves as a bridge} between the merit function $\phi$ and the infeasibility measure $\|J(x)^Tc(x)\|$. We also define the reduction in the local model for a direction $d$ and a gradient estimate $g$ as follows:
\begin{equation}\label{eq: model reduction Delta l}
\begin{aligned}
    \Delta l(x, \tau, g, d) &:= l(x, \tau, g, 0) - l(x, \tau, g, d) \\
    &= -\tau g^Td + \|c(x)\| - \|c(x) +J(x)d\|.
\end{aligned}
\end{equation}

Although \cref{algorithm1} does not involve any computation of the merit parameter $\tau$, it still plays an important role in complexity analysis. Similarly to \cite{berahas2024stochastic}, we assume that the sequence of merit parameters $\{\tau_k\}$ can be generated by the following scheme. The only difference is that we substitute values involving stochastic gradients with ``true" values and the rescaling parameter $\gb_k$.

For some fixed $\sigma \in (0, 1)$ and $\varepsilon_\tau \in [0, 1)$, let 
\small{\begin{equation}\label{eq: tau_k trial generating}
    \tau_k^{trial} \leftarrow \Bigg\{ 
    \begin{array}{ll}
        \infty & \text{if} \,\  \nabla f(x_k)^T \dktrue + \beta_k(\uktrue) ^T H_k\uktrue \le 0 \\
        \frac{(1-\sigma) (\|c_k\| - \|c_k+J_k\mo{v_k}\|)}{\nabla f(x_k)^T \dktrue + \beta_k(\uktrue)^TH_k \uktrue} & \text{otherwise},
    \end{array}
\end{equation}}
and
\begin{equation} \label{eq:taukupdate}
    \tau_k \leftarrow \Big\{ 
    \begin{array}{ll}
        \tau_{k-1} & \text{if} \,\  \tau_{k-1} \le \tau_k^{\text{trial}} \\
        \min \{(1-\eps_\tau) \tau_{k-1}, \tau_k^{\text{trial}} \} & \text{otherwise}.
    \end{array}
\end{equation}
Hence, the merit parameter sequence $\{\tau_k\}$ is monotonically non-increasing and satisfies 
\begin{equation}\label{eq:tau_k(true) leq reduction}
    \tau_k(\nabla f(x_k)^T \dktrue + \beta_k(\uktrue)^TH_k \uktrue) \le (1-\sigma)(\|c_k\| - \|c_k+J_k\mo{v_k}\|).
\end{equation}
From the fact that $\Delta l(x_k, \tau_k, \nabla f(x_k), \dktrue) = -\tau_k \nabla f_k^T\dktrue + (\|c_k\| - \|c_k +J_k\mo{v_k}\|)$ and $\sigma < 1$, we have the following model reduction inequality:
\begin{equation}\label{eq: local model reduction}
    \Delta l(x_k, \tau_k, \nabla f(x_k), \dktrue) \ge \tau_k \beta_k (\uktrue)^TH_k\uktrue + \sigma(\|c_k\| - \|c_k +J_k\mo{v_k}\|).
\end{equation}

Now we state some useful bounds derived from the step computation subproblems \cref{eq: subproblem v_k} and \cref{eq: subproblem u_k}. They link the local model reduction with our infeasibility measure. Notice that these lemmas \emph{do not} depend on the behavior of the merit parameter sequence and can be used for all cases we later discuss. The first lemma provides a tight bound on the infeasibility measure reduction. 
\begin{lemma}\label{lem: useful lemma from Byrd paper}
    For any one dimensional optimization problem
    \begin{equation*}
        \min_{z\in\bR^n} \Phi(z) = \frac{1}{2}a z^2 - bz\quad \text{s.t.} \quad z\le \omega,
    \end{equation*}
    where $b>0$ and $\omega>0$, its optimal value $\Phi^*$ satisfies
    $$
    \Phi^*\le -\frac{b}{2}\min\Big\{\frac{b}{|a|}, \omega\Big\}.
    $$
\end{lemma}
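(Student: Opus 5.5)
The statement is a one-dimensional quadratic over $z\le\omega$; the variable is really a scalar (the ``$z\in\bR^n$'' should be read as $z\in\bR$). The plan is to exhibit a feasible point $\hat z$ with $\Phi(\hat z)\le -\frac{b}{2}\min\{b/|a|,\omega\}$. Since $\Phi^*\le\Phi(\hat z)$, the bound follows without computing the exact minimizer. I will take $\hat z\in(0,\omega]$ throughout, which is feasible, and split on the sign of $a$ and on which term attains the minimum.

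\emph{Case $a\le 0$.} For every $z\in(0,\omega]$ we have $\frac12 az^2\le 0$, so $\Phi(z)\le -bz$. Taking $\hat z=\omega$ gives $\Phi^*\le -b\omega\le -\frac{b}{2}\omega\le -\frac b2\min\{b/|a|,\omega\}$. When $a=0$ I use the convention $b/|a|=+\infty$, so the minimum is $\omega$ and the bound is still valid.

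\emph{Case $a>0$.} The unconstrained minimizer is $z^*=b/a>0$. If $b/a\le\omega$, this point is feasible and
\[
\Phi(b/a)=\frac{b^2}{2a}-\frac{b^2}{a}=-\frac{b^2}{2a}=-\frac b2\cdot\frac{b}{|a|},
\]
which is at most $-\frac b2\min\{b/|a|,\omega\}$. If instead $b/a>\omega$, take $\hat z=\omega$. Then $a\omega<b$, and so
\[
\Phi(\omega)=\omega\Bigl(\frac12 a\omega-b\Bigr)<\omega\Bigl(\frac b2-b\Bigr)=-\frac b2\,\omega,
\]
which again gives the claim because here $\min\{b/|a|,\omega\}=\omega$.

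The argument is elementary. The main care point is handling $a\le 0$, where the problem may be unbounded below ($\Phi^*=-\infty$ if $z$ is allowed to run toward $-\infty$ with $a<0$). The inequality still holds trivially in that case, and the feasible-point argument covers it uniformly. I would also note the degenerate convention $a=0$ explicitly.
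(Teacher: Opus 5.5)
Your proof is correct. The paper gives no argument of its own and simply cites \cite[Lemma 2.1]{byrd2000trust}; your case split (take $z=b/a$ when $a>0$ and $b/a\le\omega$, otherwise take $z=\omega$ and use $a\omega\le b$) is the standard elementary argument for this fact, and it makes the lemma self-contained. Because you always choose $\hat z\in(0,\omega]$, your bound also holds for the problem restricted to $0<\alpha\le\omega$, which is how the paper applies the lemma to the Cauchy step.
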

\begin{proof}
    See \cite[Lemma 2.1]{byrd2000trust}.
\end{proof}

We know that $\ga_k^C$ is the solution to the problem \mo{$\underset{\ga>0}{\min} \frac{1}{2}\|c_k+\ga J_kv_k^C\|^2 \ \text{s.t.} \ \alpha \le \omega$}. Using \cref{lem: useful lemma from Byrd paper}, setting $a=\|J_kJ_k^Tc_k\|^2$ and $ b=\|J_k^Tc_k\|^2$, we have \begin{equation}\label{eq: Phi star} \Phi^*(\ga) = \frac{1}{2}\|J_kJ_k^Tc_k\|^2 (\ga_k^C)^2 - \|J_k^Tc_k\|^2(\ga_k^C) \le -\frac{\|J_k^Tc_k\|^2} {2}\min\Big\{\frac{\|J_k^Tc_k\|^2}{\|J_kJ_k^Tc_k\|^2} , \omega \Big\}.
\end{equation}

\begin{lemma}\label{lemma: constraint model reduction}
    \mo{Let Assumption \ref{assumption1} hold.} Then, there exists $\kappa_v\in\bR_{>0}$ such that, for all $k\in \bN$ with $\|c_k\| > 0$,
    \begin{equation*}
        \|c_k\|(\|c_k\|-\|c_k+J_kv_k\|) \ge \kappa_v\|J_k^Tc_k\|^2.
    \end{equation*}
\end{lemma}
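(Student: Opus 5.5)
Throughout, $\|c_k\|>0$. The plan is to pass from the Cauchy decrease condition \cref{eq: cauchy decreasee condition} to a bound on the decrease of the squared linearized residual, using the one-dimensional estimate \cref{eq: Phi star}. Write $r_k := \|c_k+\alpha_k^C J_kv_k^C\|$. By the definition of $\alpha_k^C$,
\begin{equation*}
\tfrac12 r_k^2 = \tfrac12\|c_k\|^2 + \Phi^*(\alpha_k^C) \le \tfrac12\|c_k\|^2 - \frac{\|J_k^Tc_k\|^2}{2}\min\Big\{\frac{\|J_k^Tc_k\|^2}{\|J_kJ_k^Tc_k\|^2},\omega\Big\},
\end{equation*}
where the cross term is $-\alpha\, c_k^TJ_kJ_k^Tc_k=-\alpha\|J_k^Tc_k\|^2$. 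Hence
\begin{equation*}
\|c_k\|^2-r_k^2 \ge \|J_k^Tc_k\|^2\min\{\kappa_J^{-2},\omega\},
\end{equation*}
since $\|J_kJ_k^Tc_k\|\le \kappa_J\|J_k^Tc_k\|$ by Assumption \ref{assumption1}. If $J_k^Tc_k=0$ the lemma is trivial, because the Cauchy condition gives a nonnegative left-hand side; so assume $J_k^Tc_k\neq 0$.

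Next, convert the difference of squares into a difference of norms. Since $0\le r_k\le \|c_k\|$ (the choice $\alpha=0$ is feasible), we have
\begin{equation*}
\|c_k\|-r_k = \frac{\|c_k\|^2-r_k^2}{\|c_k\|+r_k} \ge \frac{\|c_k\|^2-r_k^2}{2\|c_k\|}.
\end{equation*}
Multiplying by $\|c_k\|$ gives
\begin{equation*}
\|c_k\|(\|c_k\|-r_k)\ge \tfrac12\min\{\kappa_J^{-2},\omega\}\,\|J_k^Tc_k\|^2.
\end{equation*}

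Finally, combine this with \cref{eq: cauchy decreasee condition}, multiplied by $\|c_k\|>0$:
\begin{equation*}
\|c_k\|(\|c_k\|-\|c_k+J_kv_k\|)\ge \epsilon_v\|c_k\|(\|c_k\|-r_k).
\end{equation*}
This yields the claim with $\kappa_v=\tfrac{\epsilon_v}{2}\min\{\kappa_J^{-2},\omega\}$.

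The only delicate points are two applications of \cref{lem: useful lemma from Byrd paper}. First, it requires $a=\|J_kJ_k^Tc_k\|^2>0$; this holds when $J_k^Tc_k\neq0$, because $\|J_k^Tc_k\|^2=c_k^TJ_kJ_k^Tc_k\le\|c_k\|\,\|J_kJ_k^Tc_k\|$. Second, the factor $\|c_k\|$ must be handled correctly. Passing from squares to norms is exactly why $\|c_k\|$ appears on the left of the statement, and that in turn is why the hypothesis $\|c_k\|>0$ is needed.
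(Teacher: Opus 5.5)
Your proof is correct and follows essentially the same route as the paper. Both arguments take the Cauchy-step bound \cref{eq: Phi star} together with $\|J_k^Tc_k\|^2/\|J_kJ_k^Tc_k\|^2\ge \|J_k\|^{-2}\ge\kappa_J^{-2}$, pass from squares to norms via $\|c_k\|(\|c_k\|-r_k)\ge\frac12(\|c_k\|^2-r_k^2)$, and finish with the Cauchy decrease condition \cref{eq: cauchy decreasee condition}. The paper invokes $x(x-y)\ge\frac12(x^2-y^2)$ for that middle step, which is equivalent to your quotient argument. Your explicit uniform constant $\kappa_v=\frac{\epsilon_v}{2}\min\{\kappa_J^{-2},\omega\}$ is exactly what this argument yields, and your separate handling of $J_k^Tc_k=0$ covers a case the paper leaves implicit.
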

\begin{proof}
    We first consider the Cauchy step $v_k^C= -J_k^Tc_k$ of the subproblem \cref{eq: subproblem v_k}. Using the previous conclusion \cref{eq: Phi star}, we have
    \begin{equation}
        \begin{aligned}
            \frac{1}{2}(\|c_k + \alpha_k^CJ_kv_k^C\|^2 - \|c_k\|^2) &= \frac{1}{2}(2\ga_k^Cc_k^TJ_kv_k^C + (\alpha_k^C)^2\|J_kv_k^C\|^2)\\ 
            & = -\ga_k^C\|J_k^Tc_k\|^2 + \frac{1}{2}(\ga_k^C)^2\|J_kJ_k^Tc_k\|^2 \\
            & \le -\frac{1}{2}\|J_k^Tc_k\|^2\min\Big\{\frac{\|J_k^Tc_k\|^2}{\|J_kJ_k^Tc_k\|^2} , \omega \Big\} \\
            & \le -\frac{1}{2}\|J_k^Tc_k\|^2\min\Big\{\frac{1}{\|J_k^TJ_k\|} , \omega \Big\}.
        \end{aligned}
    \end{equation}
    Now, since $v_k$ satisfies the Cauchy decreasing condition \cref{eq: cauchy decreasee condition}, we have the following inequality with the fact $x(x-y) \ge \frac{1}{2}(x^2-y^2)$ for any $x, y \in \bR$ that 
    \begin{equation}
        \begin{aligned}
            \|c_k\|(\|c_k\| - \|c_k+J_kv_k\|) &\ge \epsilon_v \|c_k\|(\|c_k\| - \|c_k+\alpha_k^C J_kv_k^C\|) \\
            & \ge \frac{1}{2}\epsilon_v (\|c_k\|^2 - \|c_k + \alpha_k^CJ_kv_k^C\|^2) \\
            &\ge \frac{\epsilon_v}{2}\|J_k^Tc_k\|^2 \min\{\go, \frac{\go^2}{\|J_k^TJ_k\|}\}.
        \end{aligned}
    \end{equation}
    Since $\|J_k^TJ_k\|$ is bounded from above by Assumption \ref{assumption1}, there exists a constant $\kappa_v \in (0, \min\{\go, \frac{\go^2}{\|J_k^TJ_k\|}\})$ such that 
    \begin{equation*}
         \|c_k\|(\|c_k\|-\|c_k+J_kv_k\|) \ge \kappa_v\|J_k^Tc_k\|^2,
    \end{equation*}
    which proves the result.
\end{proof}

\begin{lemma}\label{lemma: norm vk}
    There exists $\underline{\go} \in\bR_{>0}$ such that, for all $k\in\bN$ with $\|c_k\|>0$,
    $$
    \underline{\go}\|J_k^Tc_k\|^2 \le \|v_k\|\le \go \|J_k^Tc_k\|.
    $$
\end{lemma}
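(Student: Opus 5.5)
The upper bound $\|v_k\|\le \go\|J_k^Tc_k\|$ is immediate: $v_k$ is feasible for the trust-region subproblem \cref{eq: subproblem v_k}, whose constraint is exactly this inequality. So the work lies in the lower bound, which I would obtain by combining \cref{lemma: constraint model reduction} with a Lipschitz-type estimate for the map $v\mapsto\|c_k+J_kv\|$.

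By the reverse triangle inequality,
\begin{equation*}
\|c_k\|-\|c_k+J_kv_k\| \le \|J_kv_k\| \le \|J_k\|\,\|v_k\| \le \kappa_J\|v_k\|.
\end{equation*}
Multiplying by $\|c_k\|>0$ and invoking \cref{lemma: constraint model reduction} gives
\begin{equation*}
\kappa_v\|J_k^Tc_k\|^2 \le \|c_k\|\bigl(\|c_k\|-\|c_k+J_kv_k\|\bigr) \le \|c_k\|\,\kappa_J\|v_k\| \le \kappa_c\kappa_J\|v_k\|,
\end{equation*}
where the last step uses $\|c_k\|\le\kappa_c$ from Assumption \ref{assumption1}. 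Setting $\underline{\go}:=\kappa_v/(\kappa_c\kappa_J)$ then yields $\underline{\go}\|J_k^Tc_k\|^2\le\|v_k\|$, as claimed.

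The only delicate point is that $\kappa_v$ in \cref{lemma: constraint model reduction} must be uniform in $k$. This holds because its proof bounds $\|J_k^TJ_k\|\le\kappa_J^2$, so one can take $\kappa_v=\frac{\epsilon_v}{2}\min\{\go,\go^2/\kappa_J^2\}$. With that, $\underline{\go}$ is independent of $k$ and the lemma holds for all $k$ with $\|c_k\|>0$.
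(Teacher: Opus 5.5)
Your proof is correct and is essentially the paper's argument (trust-region constraint for the upper bound; reverse triangle inequality plus \cref{lemma: constraint model reduction} and the bounds $\kappa_c,\kappa_J$ for the lower bound), with the minor improvement that multiplying through by $\|c_k\|$ instead of dividing by $\|J_k\|$ avoids the paper's separate $\|J_k\|=0$ case and gives the explicit constant $\underline{\go}=\kappa_v/(\kappa_c\kappa_J)$. One caveat on your side remark: the uniform constant actually supported by the proof of \cref{lemma: constraint model reduction} is $\kappa_v=\frac{\epsilon_v}{2}\min\{\go,1/\kappa_J^2\}$, not $\frac{\epsilon_v}{2}\min\{\go,\go^2/\kappa_J^2\}$ (the $\go^2$ is a slip in the paper, and the larger value can fail, e.g.\ for $\go=\kappa_J=2$), but this does not affect your argument because that lemma already asserts a $k$-independent $\kappa_v$.
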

\begin{proof}
    The right inequality comes from the trust-region constraint of \cref{eq: subproblem v_k}. From the triangle inequality and the Cauchy-Schwarz inequality, we have 
    $$
    \|c_k\| - \|c_k+J_kv_k\| \le \|J_kv_k\| \le \|J_k\|\|v_k\|.
    $$
    If $\|J_k\| = 0$ \mo{or $\|c_k\| = 0$}, the desired inequality is trivial. Otherwise, by \cref{lemma: constraint model reduction},
    \begin{equation*}
        \|v_k\|\ge \frac{ \|c_k\| - \|c_k+J_kv_k\|}{\|J_k\|}\ge \frac{\kappa_v\|J_k^Tc_k\|^2}{\|J_k\|\|c_k\|}.
    \end{equation*}
    Since $J_k$ and $c_k$ are assumed to be bounded in norm, there exists some $\underline{\go} > 0$ that satisfy $\|v_k\|\ge \underline{\go}\|J_k^Tc_k\|^2$ for all $k$.
\end{proof}

\cref{lemma: constraint model reduction} and \cref{lemma: norm vk} are \mo{are very useful in the Byrd-Omojokun type of methods \cite{omojokun1989trust} when constraint qualifications do not hold}. \mo{We additionally prove stronger versions of them when LICQ holds in \Cref{sec: when LICQ holds}}. \mo{Next, we consider a bound on the ``true" tangential component, $\uktrue$}.

\begin{lemma}\label{lemma2: u_k norm}
    Let Assumptions \ref{assumption1} and \ref{ass: assumption on Hk} hold. Let $\yktrue$ be defined by \eqref{eq:yktrue} for all $k\in\bN$. Then, we have
    \begin{equation*}
        \|\uktrue\| \le \zeta^{-1} (\|\nabla f(x_k) + J_k^T \yktrue\| + \kappa_H \omega \|J_k^T c_k\|),
    \end{equation*}
    and
    $$\|\uktrue\|\le \zeta^{-1}(\kappa_g + \kappa_H\omega \kappa_J \kappa_c) = : \kappa_u.$$
\end{lemma}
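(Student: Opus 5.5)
The argument uses the null-space curvature of $H_k$ on $\uktrue$ and then reads off the first line of the Newton system \cref{eq: true KKT system}. Since $J_k\uktrue=0$, \cref{ass: assumption on Hk} gives $\zeta\|\uktrue\|^2\le (\uktrue)^TH_k\uktrue$. Take the inner product of the first block row of \cref{eq: true KKT system} with $\uktrue$. For \emph{any} $y$ solving that system we have $H_k\uktrue = -(\nabla f(x_k)+H_kv_k+J_k^Ty)$. Moreover $(\uktrue)^TJ_k^Ty = (J_k\uktrue)^Ty=0$, so we may add $J_k^Ty$ freely. This yields
\begin{equation*}
\zeta\|\uktrue\|^2 \le -(\uktrue)^T(\nabla f(x_k)+J_k^T\yktrue) - (\uktrue)^TH_kv_k .
\end{equation*}
This holds with the specific choice $\yktrue$ from \eqref{eq:yktrue}, since the $J_k^T\yktrue$ term contributes zero against $\uktrue$ in any case.

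Next apply Cauchy--Schwarz to get $\zeta\|\uktrue\|^2\le \|\uktrue\|(\|\nabla f(x_k)+J_k^T\yktrue\| + \|H_k\|\|v_k\|)$. Bound $\|H_k\|\le\kappa_H$ by \cref{ass: assumption on Hk} and $\|v_k\|\le\omega\|J_k^Tc_k\|$ by the trust-region constraint in \cref{eq: subproblem v_k}, as in \cref{lemma: norm vk}. If $\uktrue=0$ the claim is trivial; otherwise divide by $\zeta\|\uktrue\|$ to obtain the first inequality.

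For the uniform bound $\kappa_u$, drop the multiplier by working with $y=0$ in the same computation. This is legitimate because $(\uktrue)^TJ_k^Ty=0$ for every $y$, so the inner product identity holds with $\nabla f(x_k)$ alone. We then get $\zeta\|\uktrue\|\le \|\nabla f(x_k)\| + \kappa_H\omega\|J_k^T\|\|c_k\|\le \kappa_g+\kappa_H\omega\kappa_J\kappa_c$ by the bounds following \cref{assumption1}.

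The only subtle point is this last step. One must not bound $\|\nabla f(x_k)+J_k^T\yktrue\|$ directly, because $\yktrue$ involves a pseudoinverse that may be unbounded under rank deficiency. The orthogonality $J_k\uktrue=0$ avoids this issue entirely.
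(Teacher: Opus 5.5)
Your proof is correct and follows the paper's argument essentially line for line. Like the paper, you use the null-space curvature of $H_k$, pair the first block row of the true Newton system with $\uktrue$, use $J_k\uktrue=0$ to insert or drop $J_k^T\yktrue$, and finish with Cauchy--Schwarz, $\|H_k\|\le\kappa_H$ and $\|v_k\|\le\omega\|J_k^Tc_k\|$. (You also handle the $\uktrue=0$ case that the paper skips.)

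One small correction to your closing remark. $J_k^T\yktrue$ is minus the orthogonal projection of $\nabla f(x_k)+H_kv_k+H_k\uktrue$ onto $\text{Range}(J_k^T)$, so it can never blow up even when $\yktrue$ does. The real reason not to bound $\|\nabla f(x_k)+J_k^T\yktrue\|$ directly is that this reintroduces $\|\uktrue\|$ and makes the estimate circular.
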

\begin{proof}
    From \cref{eq: true KKT system}, we know $H_k\uktrue + J_k^T\yktrue=-\nabla f(x_k) + H_k v_k$. By Assumption \ref{ass: assumption on Hk}, since $\uktrue$ lies strictly in the null space of $J_k$, we have
    \begin{align*}
        \zeta\|\uktrue\|^2 & \le (\uktrue)^T H_k \uktrue \\
        & = -\nabla f(x_k)^T\uktrue - v_k^T H_k \uktrue \\
        & = -(\nabla f(x_k) + J_k^T \yktrue)^T\uktrue - v_k^T H_k \uktrue \\
        &\le \|\nabla f(x_k) + J_k^T \yktrue\| \|\uktrue\| + \kappa_H \|v_k\| \|\uktrue\| \\
        &\le \|\uktrue\| (\|\nabla f(x_k) + J_k^T \yktrue\| + \kappa_H \omega \|J_k^T c_k\|).
    \end{align*} 
    Dividing both sides by $\zeta \|\uktrue\|$ yields the first result. Similarly,
    \begin{align*}
        \zeta\|\uktrue\|^2 &\le -\nabla f(x_k)^T\uktrue - v_k^T H_k \uktrue \\
        & \le \|\nabla f(x_k)\|\|\uktrue\| 
        + \kappa_H \|v_k\|\|\uktrue\| \\
        & \le \|\uktrue\|(\kappa_g + \kappa_H \omega \|J_k^Tc_k\|) \\
        & \le \|\uktrue\|(\kappa_g + \kappa_H\omega \kappa_J\kappa_c).
    \end{align*}
    As before, dividing through yields the second result.
\end{proof}
The following lemma bounds the distance in expectation between the computed direction $d_k$ and the true direction $\dktrue$, which reflects the variance of the stochastic estimates.

\begin{lemma}\label{lemma: reduced KKT}
    Let Assumptions \ref{ass: assumption on Hk} and \ref{ass2} hold. Then, for all $k\in \bN$, $\bE_k [u_k] = \uktrue$, $\bE_k[d_k] = \dktrue$, 
    and $\bE_k[\|d_k-\dktrue\|]\le \beta_k \zeta^{-1}\sqrt{M}$.
\end{lemma}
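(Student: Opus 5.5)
The plan is to exploit linearity of the tangential subproblem in the gradient estimate. By Assumption \ref{ass: assumption on Hk}, $H_k$ is positive definite on $\text{Null}(J_k)$, so \cref{eq: subproblem u_k} has a unique solution. It can be written with the null-space basis $Z_k$ as
\begin{equation*}
u_k = -Z_k (Z_k^T H_k Z_k)^{-1} Z_k^T (g_k + H_k v_k),
\end{equation*}
and similarly for $\uktrue$ with $\nabla f(x_k)$ in place of $g_k$. The reduced Hessian $Z_k^TH_kZ_k$ is invertible, and since $Z_k$ has orthonormal columns, Assumption \ref{ass: assumption on Hk} gives $\lambda_{\min}(Z_k^TH_kZ_k)\ge\zeta$.

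Next, $v_k$, $H_k$, $J_k$ and $Z_k$ are all determined before $g_k$ is drawn. Here $v_k$ comes from \cref{eq: subproblem v_k}, which involves only $c_k$ and $J_k$, and $H_k$ is chosen independently of $g_k$ by Assumption \ref{ass: assumption on Hk}. Hence they are deterministic under $\bE_k$, and the map $g_k\mapsto u_k$ is affine. Taking conditional expectations and using $\bE_k[g_k]=\nabla f(x_k)$ from Assumption \ref{ass2} gives $\bE_k[u_k]=\uktrue$. Since $\beta_k$ is a predefined (deterministic) parameter, $\bE_k[d_k]=\beta_k\bE_k[u_k]+v_k=\beta_k\uktrue+v_k=\dktrue$.

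For the variance bound, write
\begin{equation*}
d_k-\dktrue=\beta_k(u_k-\uktrue)=-\beta_k Z_k(Z_k^TH_kZ_k)^{-1}Z_k^T(g_k-\nabla f(x_k)).
\end{equation*}
The norm of $Z_k(Z_k^TH_kZ_k)^{-1}Z_k^T$ is at most $\|(Z_k^TH_kZ_k)^{-1}\|\le\zeta^{-1}$. Therefore $\|d_k-\dktrue\|\le\beta_k\zeta^{-1}\|g_k-\nabla f(x_k)\|$. Jensen's inequality then gives $\bE_k\|g_k-\nabla f(x_k)\|\le\sqrt{\bE_k\|g_k-\nabla f(x_k)\|^2}\le\sqrt{M}$, which yields $\bE_k[\|d_k-\dktrue\|]\le\beta_k\zeta^{-1}\sqrt{M}$.

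An alternative route avoids $Z_k$ by arguing directly from the KKT systems. Subtracting \cref{eq: true KKT system} from \cref{eq: subproblem u_k's KKT system} and setting $\delta=u_k-\uktrue\in\text{Null}(J_k)$ gives $\zeta\|\delta\|^2\le\delta^TH_k\delta=-\delta^T(g_k-\nabla f(x_k))$, since $\delta^TJ_k^T(y_k-\yktrue)=0$. This also gives $\|\delta\|\le\zeta^{-1}\|g_k-\nabla f(x_k)\|$. The only delicate point is the measurability and independence bookkeeping in the first step, namely that $v_k$, $H_k$ and $\beta_k$ are not affected by $g_k$. The non-uniqueness of the multipliers is harmless because only the primal component $u_k$ enters.
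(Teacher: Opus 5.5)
Your proof is correct and follows essentially the same route as the paper: you write $u_k=-Z_k(Z_k^TH_kZ_k)^{-1}Z_k^T(g_k+H_kv_k)$ and use linearity plus unbiasedness (with $v_k$, $H_k$, $\beta_k$ fixed under $\bE_k$) to get the two expectation identities, then bound the deviation via $\|Z_k(Z_k^TH_kZ_k)^{-1}Z_k^T\|\le\zeta^{-1}$ and Jensen's inequality. Your alternative KKT-difference argument for $\|u_k-\uktrue\|\le\zeta^{-1}\|g_k-\nabla f(x_k)\|$ is also valid and avoids choosing $Z_k$, but it is not needed.
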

\begin{proof}
    Under Assumption \ref{ass: assumption on Hk}, there exists a matrix $Z_k$ whose columns form an orthogonal basis for $\text{Null}(J_k)$, and vectors $w_k, \wktrue$ such that $u_k = Z_kw_k$ and $\uktrue = Z_k\wktrue$. From subproblem \cref{eq: subproblem u_k}, \mo{explicit forms of $w_k$ and $\wktrue$ are given as}:
    $$
    w_k = -(Z_k^TH_kZ_k)^{-1}Z_k^T (g_k +H_kv_k); \quad \wktrue = -(Z_k^TH_kZ_k)^{-1}Z_k^T (\nabla f(x_k) +H_kv_k).
    $$
    Since $(Z_k^TH_kZ_k)^{-1}Z_k^T$ and $Z_k$ are both linear operators, from Assumption \ref{ass2} and the property of expectation, we have $\bE_k [ u_k] = \uktrue$. \mo{In addition, since $\beta_k$ is independent of $g_k$}, $\bE_k[d_k] = \bE_k[\beta_ku_k +v_k] = \beta_k\bE_k[u_k] +v_k = \beta_k \uktrue +v_k = \dktrue$.
    
     The rest of the proof follows from Jensen's inequality and Assumptions \ref{ass: assumption on Hk} and \ref{ass2}.
        \begin{align*}
            \bE_k[\|d_k - \dktrue\|] & = \bE_k [\|\beta_k (u_k - \uktrue)\|]  \\
            &= \gb_k\bE_k[\|Z_k(w_k - \wktrue)\|] \\
            & = \gb_k \bE_k [\|Z_k(Z_k^TH_kZ_k)^{-1}Z_k^T (g_k -\nabla f(x_k)\|] \\
            & \le \beta_k \bE_k [\|Z_k(Z_k^TH_kZ_k)^{-1}Z_k^T\| \|g_k - \nabla f(x_k)\|] \\
            & = \beta_k \|(Z_k^TH_kZ_k)^{-1}\| \bE_k [\|g_k - \nabla f(x_k)\| ] \\
            & \le \gb_k\zeta^{-1} \sqrt{\bE_k [\|g_k - \nabla f(x_k)\|^2]} \\
            &\le \gb_k\zeta^{-1}\sqrt{M}.
        \end{align*}

\end{proof}

Next, we provide an important upper bound on the expectation of $\|u_k\|^2$.

\begin{lemma}\label{lem: upper bound of norm u_k}
Let Assumptions \ref{ass: assumption on Hk} and \ref{ass2} hold. Then, for all $k \in \bN,$ 
$$\bE_k [\|u_k\|^2] \le \zeta^{-2}M + \zeta^{-1} (\uktrue)^T H_k \uktrue.$$
\end{lemma}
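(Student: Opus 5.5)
We decompose $u_k$ around its conditional mean, as in the proof of \cref{lemma: reduced KKT}. With the null-space representation $u_k = Z_k w_k$ and $\uktrue = Z_k \wktrue$, set $P_k := Z_k (Z_k^T H_k Z_k)^{-1} Z_k^T$. Then
\begin{equation*}
u_k - \uktrue = -P_k (g_k - \nabla f(x_k)).
\end{equation*}
The cross term in the expansion will vanish, leaving two terms to bound:
\begin{equation*}
\bE_k[\|u_k\|^2] = \bE_k[\|u_k - \uktrue\|^2] + \|\uktrue\|^2 .
\end{equation*}
This holds because $\uktrue$ is $\mathcal{F}_k$-measurable: $v_k$ and $H_k$ do not depend on $g_k$ by Assumption \ref{ass: assumption on Hk}. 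Also $\bE_k[u_k - \uktrue] = 0$ by \cref{lemma: reduced KKT}, so $2\bE_k[(u_k - \uktrue)^T \uktrue] = 0$.

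\textbf{Variance term.} Since $Z_k$ has orthonormal columns, $\|P_k\| = \|(Z_k^T H_k Z_k)^{-1}\|$. By Assumption \ref{ass: assumption on Hk}, $Z_k^T H_k Z_k \succeq \zeta I$, hence $\|(Z_k^T H_k Z_k)^{-1}\| \le \zeta^{-1}$. Therefore
\begin{equation*}
\bE_k[\|u_k - \uktrue\|^2] \le \zeta^{-2}\, \bE_k[\|g_k - \nabla f(x_k)\|^2] \le \zeta^{-2} M
\end{equation*}
by Assumption \ref{ass2}. This is the squared analogue of the Jensen step in \cref{lemma: reduced KKT}, without taking a square root.

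\textbf{Deterministic term.} Since $\uktrue \in \text{Null}(J_k)$, Assumption \ref{ass: assumption on Hk} gives $\zeta \|\uktrue\|^2 \le (\uktrue)^T H_k \uktrue$. Hence $\|\uktrue\|^2 \le \zeta^{-1} (\uktrue)^T H_k \uktrue$.

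Combining the two bounds yields the claim. No real obstacle is expected. The only point to check carefully is that the cross term vanishes, which requires that $H_k$, $v_k$ and $Z_k$, and therefore $\uktrue$ and $P_k$, are deterministic given the history up to $x_k$. That is exactly what the independence clause in Assumption \ref{ass: assumption on Hk} and the $g_k$-independence of $v_k$ provide.
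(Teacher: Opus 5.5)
Your argument is correct, but it is organized differently from the paper's. The paper first applies the curvature bound to the random vector, $\bE_k[\|u_k\|^2]\le\zeta^{-1}\bE_k[u_k^TH_ku_k]$. It then uses the Newton systems for $u_k$ and $\uktrue$ to turn $\bE_k[u_k^TH_ku_k]-(\uktrue)^TH_k\uktrue$ into $\nabla f(x_k)^T\uktrue-\bE_k[g_k^Tu_k]$; the multiplier terms vanish because $J_ku_k=J_k\uktrue=0$. This quantity is then bounded by $\zeta^{-1}M$ through the explicit null-space formula for $u_k$. In effect the paper proves a bias--variance identity in the $H_k$-energy norm: since your $P_k$ satisfies $P_kH_kP_k=P_k$,
\begin{equation*}
\bE_k[u_k^TH_ku_k]=(\uktrue)^TH_k\uktrue+\bE_k\bigl[(g_k-\nabla f(x_k))^TP_k(g_k-\nabla f(x_k))\bigr].
\end{equation*}
You instead split in the Euclidean norm, apply the curvature bound only to the deterministic $\uktrue$, and control the variance with the operator bound $\|P_k\|\le\zeta^{-1}$ already used in \cref{lemma: reduced KKT}. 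Both routes need the same measurability of $H_k$, $v_k$, $Z_k$ given the history, and they give identical constants. Yours is shorter and never touches the KKT system or the possibly non-unique multipliers. It also avoids bounding $\bE_k[g_k^Tu_k]$ and $\nabla f(x_k)^T\uktrue$ separately. That separate bounding is delicate, because one-sided estimates with the constant $\zeta^{-1}$ on each term do not hold in the needed directions; the argument is clean only once the two terms are combined into the exact quadratic form above. The paper's route has the mild advantage of measuring the noise in the natural norm of the tangential subproblem, so it yields a bound on $\bE_k[u_k^TH_ku_k]$ itself along the way.
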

\begin{proof} Since $u_k, \uktrue\in \text{Null}(J_k)$, by Assumption \ref{ass: assumption on Hk} and \cref{eq: subproblem u_k's KKT system}, we have
    \begin{align*}
        & \quad \ \bE_k [\|u_k\|^2] - \zeta^{-1}(\uktrue)^T H_k\uktrue \\
        &\le \zeta^{-1}( \bE_k [u_k^TH_ku_k] - (\uktrue)^T H_k\uktrue) \\
        & = \zeta^{-1} (\bE_k[u_k^T(-u_k^TJ_k^Ty_k-g_k-H_kv_k)] -(\uktrue)^T(-(\uktrue)^TJ_k^T\yktrue - \nabla f(x_k)-H_kv_k)\\
        & = \zeta^{-1} (\nabla f(x_k)^Tu_k -\bE_k [g_k^Tu_k]).
    \end{align*}
    Since $u_k = Z_kw_k = Z_k(Z_k^TH_kZ_k)^{-1}Z_k^T (g_k + H_kv_k)$, we know 
    \begin{align*}
        \bE_k[g_k^Tu_k] &= \bE_k[-g_k^TZ_k(Z_k^TH_kZ_k)^{-1}Z_k^T (g_k + H_kv_k)] \\ 
        &\le -\zeta^{-1}\bE_k[\|g_k^TZ_k\|^2] - \zeta^{-1}\bE_k[g_k^TH_kv_k] \\
        &= -\zeta^{-1}(\bE_k[\|g_k^TZ_k\|^2] + \nabla f(x_k)^TH_kv_k).
    \end{align*}
    Similarly, $\nabla f(x_k)^Tu_k \le -\zeta^{-1} (\|g_k^TZ_k\|^2 + \nabla f(x_k)^TH_kv_k).$ Combining the inequalities and Assumption \ref{ass2}, we have 
    \begin{equation*}
        \bE_k[\|u_k\|^2]-\zeta^{-1}(\uktrue)^TH_k\uktrue) \le \zeta^{-2} (-\|Z_k^Tg_k\|^2 + \bE_k [\|Z_k^Tg_k\|^2]) \le \zeta^{-2}M,
    \end{equation*}
    which proves the result.
\end{proof}

\mo{The following lemma allows us to connect \eqref{eq: local model reduction} with the gradient of the Lagrangian.}

\begin{lemma}\label{lemma: uHutrue with KKT measure no LICQ}
    Let Assumptions \ref{assumption1} and \ref{ass: assumption on Hk} hold and let $\yktrue$ be defined by \eqref{eq:yktrue}. Then,
    $$
    (\uktrue)^TH_k(\uktrue) \ge \frac{\zeta }{2\kappa_H^{2}} \|\nabla f(x_k) + J_k^Ty_k^\text{true}\|^2 - \kappa_0 \|J_k^T c_k\|^2,
    $$
    where $\kappa_0:=\omega^2(2 \kappa_H +2 \zeta^{-1}\kappa_H^2 + \zeta)$.
\end{lemma}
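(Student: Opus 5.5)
The plan is to express the Lagrangian gradient $\nabla f(x_k)+J_k^T\yktrue$ exactly in terms of $\uktrue$ and $v_k$, and then use the curvature bound of Assumption~\ref{ass: assumption on Hk} together with the trust-region bound on $\|v_k\|$.

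\textbf{Step 1 (identify the Lagrangian gradient).} The first block row of \cref{eq: true KKT system} says that some multiplier $\tilde y$ satisfies
\[
\nabla f(x_k)+H_kv_k+H_k\uktrue=-J_k^T\tilde y .
\]
So the vector $q_k:=\nabla f(x_k)+H_kv_k+H_k\uktrue$ lies in $\text{Range}(J_k^T)$. The least-squares choice \eqref{eq:yktrue} gives $J_k^T\yktrue=-J_k^T(J_k^T)^+q_k$. Since $J_k^T(J_k^T)^+$ is the orthogonal projector onto $\text{Range}(J_k^T)$, this equals $-q_k$. Hence the exact identity
\[
\nabla f(x_k)+J_k^T\yktrue=-H_k(\uktrue+v_k).
\]
This is the one point that needs care: rank deficiency makes $\yktrue$ non-unique, and we must check that the pseudoinverse choice still cancels $q_k$ exactly. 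It does, because consistency of the KKT system places $q_k$ in the range of $J_k^T$.

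\textbf{Step 2 (bound the norm).} Using $(a+b)^2\le 2a^2+2b^2$, $\|H_k\|\le\kappa_H$, and $\|v_k\|\le\omega\|J_k^Tc_k\|$ from \cref{lemma: norm vk} (the trust-region constraint in \cref{eq: subproblem v_k}),
\[
\|\nabla f(x_k)+J_k^T\yktrue\|^2\le 2\kappa_H^2\|\uktrue\|^2+2\kappa_H^2\omega^2\|J_k^Tc_k\|^2 .
\]

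\textbf{Step 3 (use curvature).} Since $\uktrue\in\text{Null}(J_k)$, Assumption~\ref{ass: assumption on Hk} gives $(\uktrue)^TH_k\uktrue\ge\zeta\|\uktrue\|^2$. Combining with Step~2,
\[
(\uktrue)^TH_k\uktrue\ge\zeta\|\uktrue\|^2\ge \frac{\zeta}{2\kappa_H^2}\|\nabla f(x_k)+J_k^T\yktrue\|^2-\zeta\omega^2\|J_k^Tc_k\|^2 .
\]
Finally, $\zeta\omega^2\le\kappa_0=\omega^2(2\kappa_H+2\zeta^{-1}\kappa_H^2+\zeta)$, so the claimed inequality follows. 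The stated constant $\kappa_0$ has slack relative to what this argument needs.

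If the projection argument in Step~1 were unavailable, a fallback would be to expand $\|\nabla f(x_k)+J_k^T\yktrue\|$ via the first bound of \cref{lemma2: u_k norm} and cross terms $v_k^TH_k\uktrue$. That route is presumably where the extra $2\kappa_H$ and $2\zeta^{-1}\kappa_H^2$ terms in $\kappa_0$ come from. With the exact identity, however, it is unnecessary.
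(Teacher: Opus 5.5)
Your proof is correct. It reaches the stated inequality by a shorter route than the paper, and with a sharper constant.

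Both arguments rest on the identity $\nabla f(x_k)+J_k^T\yktrue=-H_k(\uktrue+v_k)$. The paper simply reads it off \eqref{eq: true KKT system}. Your Step 1 actually checks that the pseudoinverse multiplier \eqref{eq:yktrue} satisfies the first block row, via $q_k\in\text{Range}(J_k^T)$ and $J_k^T(J_k^T)^+$ being the orthogonal projector onto that range. That is a point the paper glosses over in the rank-deficient setting.

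After that the two routes diverge:
\begin{itemize}
\item The paper first uses $(\uktrue)^TH_k\uktrue\ge\zeta\kappa_H^{-2}\|H_k\uktrue\|^2$ and writes $H_k\uktrue=H_k(\uktrue+v_k)-H_kv_k$. It expands the square and bounds the cross term $2v_k^TH_kH_k\uktrue$ with the first estimate of \cref{lemma2: u_k norm}. It then absorbs the resulting product $\|\nabla f(x_k)+J_k^T\yktrue\|\,\|J_k^Tc_k\|$ with Young's inequality. As you guessed, this is where the $2\kappa_H$ and $2\zeta^{-1}\kappa_H^2$ terms in $\kappa_0$ come from.
\item You instead upper bound $\|H_k(\uktrue+v_k)\|^2\le 2\kappa_H^2(\|\uktrue\|^2+\|v_k\|^2)$ and solve for $\|\uktrue\|^2$. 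This needs neither \cref{lemma2: u_k norm} nor Young's inequality.
\end{itemize}

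Your route keeps the same leading coefficient $\zeta/(2\kappa_H^2)$ but replaces $\kappa_0$ by $\zeta\omega^2$. The stated lemma then follows trivially, since $\zeta\omega^2\le\kappa_0$.
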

\begin{proof}
    From Assumption \ref{ass: assumption on Hk}, we know
    \begin{equation*}
        (\uktrue)^TH_k(\uktrue) \ge \zeta \|\uktrue\|^2 \ge \zeta \kappa_H^{-2} \|H_k\uktrue\|^2.
    \end{equation*}
    Then, from \cref{eq: true KKT system},  \cref{lemma: norm vk}, and \cref{lemma2: u_k norm}, we can conclude that
    \begin{align*}
        \|H_k\uktrue\|^2 &=\|H_k\uktrue + H_kv_k - H_kv_k\|^2 \\
        &= \|H_k\uktrue + H_kv_k\|^2 - 2v_k^TH_kH_k(\uktrue+v_k) + \|H_kv_k\|^2 \\
        & \ge \| \nabla f(x_k) + J_k^T\yktrue\|^2 - 2v_k^TH_kH_k\uktrue - \|H_kv_k\|^2 \\
        &\ge \| \nabla f(x_k) + J_k^T\yktrue\|^2 - \kappa_H^2 (2\|\uktrue\|+\|v_k\|)\|v_k\|\\
        &\ge \| \nabla f(x_k) + J_k^T\yktrue\|^2 \\
        &\qquad- \kappa_H^2 (2\zeta^{-1}\|\nabla f(x_k) + J_k^T\yktrue\| + 2\zeta^{-1} \kappa_H \omega \|J_k^T c_k\| + \|v_k\|)\|v_k\| \\
        &\ge \| \nabla f(x_k) + J_k^T\yktrue\|^2 - 2\omega \zeta^{-1} \kappa_H^2\|\nabla f(x_k) + J_k^T\yktrue\|\|J_k^T c_k\| \\
        &\qquad- \kappa_H^2\omega^2 (2\zeta^{-1} \kappa_H +1) \|J_k^T c_k\|^2.
    \end{align*}
    Next, applying Young's inequality to the 2nd term in the above inequality,
    \begin{equation*}
        2\omega \zeta^{-1} \kappa_H^2\|\nabla f(x_k) + J_k^T\yktrue\|\|J_k^T c_k\| \le \frac12 \|\nabla f(x_k) + J_k^T\yktrue\|^2 + 2 \omega^2 \zeta^{-2} \kappa_H^4\|J_k^T c_k\|^2,
    \end{equation*}
    and thus
    \begin{equation*}
        \|H_k\uktrue\|^2 \ge \frac12\| \nabla f(x_k) + J_k^T\yktrue\|^2 - \kappa_H^2\omega^2(2\zeta^{-1} \kappa_H +2 \zeta^{-2}\kappa_H^2 + 1)\|J_k^T c_k\|^2,
    \end{equation*}
    which proves the result.
\end{proof}

    

\mo{Recall that} the merit parameter sequence $\{\tau_k\}$ is monotonically non-increasing and non-negative. Therefore, by the monotone convergence theorem, there are two possible tail behaviors: 
\begin{enumerate}[label=(\roman*)]
    \item There exists some $\taum \in \bR_{>0}$ such that $\lim_{k\to \infty} \tau_k = \tau_{\min}$,
    \item $\{\tau_k\}$ vanishes, \ie \ $\lim_{k\to \infty} \tau_k = 0$. 
\end{enumerate}
\mo{We formalize event (i) in the following assumption.
\begin{ass} \label{ass:taumin}
    There exists $\taum > 0$ such that
    \begin{equation} \label{eq:taumin}
            \taum(\nabla f(x_k)^T \dktrue + \beta_k(\uktrue)^TH_k \uktrue) \le (1-\sigma)(\|c_k\| - \|c_k+J_kv_k\|),
    \end{equation}
    holds for all $k \in \mathbb{N}$.
\end{ass}}

\mo{As an immediate consequence of this assumption}, by \cref{eq: local model reduction} we know that for all $k\in\bN$,
\begin{equation}\label{eq: model reduction and ukHkuk}
     \Delta l(x_k, \taum, \nabla f(x_k), \dktrue) \ge \taum \beta_k (\uktrue)^TH_k\uktrue + \sigma(\|c_k\| - \|c_k +J_k\mo{v_k}\|).
\end{equation}
This generic bound on the model reduction frees us from computing $\tau_k$ values, thus avoiding the over-complication of the analysis and extra computation in practice, unlike in \cite{berahas2024stochastic} \mo{which relies on stochastic estimates of $\tau_k$ at each iteration}.

Event (ii) leads the merit function $\phi$ to ignore the objective $f$ in \mo{the limit} and to minimize the constraint violation $\|c_k\|$ only. \mo{Such behavior can occur even in \textit{deterministic} SQP methods when $\{J_k\}$ does not have full rank, in which case the best one can hope for is convergence to an infeasible stationary point, which we prove below}.

We \mo{consider the convergence of \Cref{algorithm1} and associated complexity results under these different events}. In fact, \mo{Assumption \ref{ass:taumin} could hold regardless of whether LICQ holds}, which \mo{leads} to different complexity results \mo{in terms of} the infeasibility measure. Therefore, we will mainly discuss \emph{three} cases in the following subsections.

\subsection{Case I: \mo{Assumption \ref{ass:taumin} holds without constraint qualifications}} \label{sec: when taum and no licq}

In this subsection, we assume that $\tau_k \searrow \taum >0$ without any constraint qualifications. We first state a generic merit function descent lemma that relates the improvement of the merit function to the local model.

\begin{lemma}\label{lemma: generic descent}
    Let Assumptions \ref{assumption1}, \ref{ass: assumption on Hk}, \ref{ass2}, \mo{and \ref{ass:taumin}} hold, and $\alpha_k$ is chosen as in \cref{alpha choice: interval}. Then if $\alpha_k \le 1$, we have 
    \begin{equation}\label{eq: phi reduction}
        \begin{aligned}
        & \quad \ \phi(x_k+ \alpha_k d_k, \tau_{\min}) - \phi(x_k, \taum) \\ & \le  -\alpha_k \Delta l(x_k, \taum, \nabla f(x_k), \dktrue) + \frac{\alpha_k^2\beta_k^2}{2}(\taum L + \Gamma) \|u_k\|^2 \\ 
        & \qquad + \frac{\alpha_k^2}{2}(\taum L + \Gamma)\|v_k\|^2  +\alpha_k \taum \nabla f(x_k)^T(d_k - \dktrue).
    \end{aligned}
    \end{equation}
\end{lemma}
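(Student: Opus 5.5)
We bound the change in each part of the merit function with the Lipschitz conditions of Assumption \ref{assumption1}, and then regroup the first-order terms into the model reduction $\Delta l$ evaluated at the true direction.

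\textbf{Objective term.} Lipschitz continuity of $\nabla f$ on the convex set $\gx$ gives
$$f(x_k+\alpha_k d_k)\le f(x_k)+\alpha_k\nabla f(x_k)^Td_k+\tfrac{L}{2}\alpha_k^2\|d_k\|^2.$$
We split $\nabla f(x_k)^Td_k=\nabla f(x_k)^T\dktrue+\nabla f(x_k)^T(d_k-\dktrue)$. After multiplying by $\taum$, the second piece is exactly the last term of \cref{eq: phi reduction}.

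\textbf{Constraint term.} Lipschitz continuity of $J$ gives
$$\|c(x_k+\alpha_k d_k)-c_k-\alpha_kJ_kd_k\|\le \tfrac{\Gamma}{2}\alpha_k^2\|d_k\|^2.$$
By the triangle inequality,
$$\|c(x_k+\alpha_kd_k)\|\le\|c_k+\alpha_kJ_kd_k\|+\tfrac{\Gamma}{2}\alpha_k^2\|d_k\|^2.$$
By \cref{eq: Jkdktrue = Jkdk}, $J_kd_k=J_kv_k$. Since $\alpha_k\le 1$ and the norm is convex,
$$\|c_k+\alpha_kJ_kv_k\|\le(1-\alpha_k)\|c_k\|+\alpha_k\|c_k+J_kv_k\|.$$
This is the only place the hypothesis $\alpha_k\le1$ is needed.

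\textbf{Combining.} We add $\taum$ times the objective inequality to the constraint inequality. The first-order terms become
$$\alpha_k\big(\taum\nabla f(x_k)^T\dktrue+\|c_k+J_kv_k\|-\|c_k\|\big).$$
Because $J_k\dktrue=J_kv_k$, this equals $-\alpha_k\Delta l(x_k,\taum,\nabla f(x_k),\dktrue)$ by \cref{eq: model reduction Delta l}. The remaining term is $\tfrac{\alpha_k^2}{2}(\taum L+\Gamma)\|d_k\|^2$.

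\textbf{Splitting $\|d_k\|^2$.} The components satisfy $u_k\in\text{Null}(J_k)$ and $v_k\in\text{Range}(J_k^T)$, so they are orthogonal. The paper states that $v_k$ lies in the range space, and the Cauchy step and CG/Steihaug iterates do too. Hence
$$\|d_k\|^2=\beta_k^2\|u_k\|^2+\|v_k\|^2,$$
which gives exactly the stated quadratic terms.

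The main point requiring care is this orthogonality. If $v_k$ were not guaranteed to lie in $\text{Range}(J_k^T)$, we would instead use $\|d_k\|^2\le 2\beta_k^2\|u_k\|^2+2\|v_k\|^2$, which changes the constants by a factor of 2. So I would explicitly invoke $v_k\in\text{Range}(J_k^T)$ as stated in the setup. Assumptions \ref{ass: assumption on Hk}, \ref{ass2} and \ref{ass:taumin} are not really needed beyond fixing $\taum>0$ and making $u_k$ and $\dktrue$ well defined.
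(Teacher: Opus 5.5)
Your proposal is correct and follows essentially the same route as the paper's proof. It uses the Lipschitz bounds on $f$ and $c$, the convex-combination step $\|c_k+\alpha_k J_k d_k\| \le (1-\alpha_k)\|c_k\| + \alpha_k\|c_k+J_kd_k\|$ (the only place $\alpha_k\le 1$ enters), the identity $J_kd_k = J_kv_k$ to form $-\alpha_k\Delta l$ at the true direction, and orthogonality of $u_k\in\text{Null}(J_k)$ and $v_k\in\text{Range}(J_k^T)$ to split $\|d_k\|^2$ — just as the paper does.
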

\begin{proof}
    By the $L$-Lipschitz continuity of $\nabla f(x)$ and $\Gamma$-Lipschitz continuity of $J_k$, we have 
    \begin{align} \label{generic descent of phi}
    \notag &\quad \ \phi(x_k+\alpha_k d_k, \taum) - \phi (x_k, \taum) \\ \notag
        & = \taum [f(x+\alpha_k d_k) - f(x_k)] + \|c(x_k+\alpha_kd_k)\| - \|c(x_k)\|  \\ \notag
        & \le \alpha_k\taum \nabla f(x_k)^Td_k + \alpha_k\|c_k+ J_kd_k\| + |1-\alpha_k|\|c_k\| \\ \notag
        & \qquad - \|c_k\| + \frac{1}{2}(\taum L+\Gamma) \alpha_k^2\|d_k\|^2 \\
        & = \alpha_k\taum \nabla f(x_k)^Td_k -\alpha_k (\|c_k\|-\|c_k+ J_kd_k\|) + \frac{1}{2}(\taum L+\Gamma) \alpha_k^2\|d_k\|^2.
    \end{align}
Since $d_k = \beta_k u_k + v_k$ and $u_k, v_k$ are orthogonal, we have $\|d_k\|^2 = \beta_k^2\|u_k\|^2 + \|v_k\|^2$. Combining with the fact $J_kd_k = J_k\dktrue$, we have 
\begin{align*}
    & \quad \ \phi (x_k +\alpha_k d_k, \taum) - \phi(x_k, \taum) \\
    & \le \alpha_k\taum \nabla f(x_k)^T \dktrue - \alpha_k ( \|c_k\| - \|c_k+J_k\dktrue\|) \\
    & \qquad + \frac{{\alpha_k^2}}{2}(\taum L + \Gamma)(\beta_k^2\|u_k\|^2 + \|v_k\|^2) + \alpha_k\taum \nabla f(x_k)^T (d_k - \dktrue) \\
    & \le -\alpha_k \Delta l(x_k, \taum, \nabla f(x_k), \dktrue) + \frac{\alpha_k^2\beta_k^2}{2}(\taum L + \Gamma) \|u_k\|^2  \\
    & \qquad + \alpha_k \taum \nabla f(x_k)^T(d_k - \dktrue) + \frac{\alpha_k^2}{2}(\taum L + \Gamma)\|v_k\|^2.
\end{align*}
\end{proof}

\cref{lemma: generic descent} relates the reduction in the merit function $\phi$ to the local model $l$, the norms of $v_k$ and $u_k$, and the stochastic error in $d_k$. It also reveals how $\beta_k^2$ acts on $\|u_k\|^2$ to \mo{control} the stochastic variance.

Recall that $\alpha_k$ is chosen from the interval $[\nu, \nu + \theta \beta_k]$. Now we choose \begin{equation}\label{eq: define nu}
\nu \in \left(0,  \min\left\{ \frac{\sigma\kappa_v\kappa_c^{-1}}{2(\taum L+\Gamma)\omega^2}, 1-\theta \kappa_\beta\right\}\right],
\end{equation} which ensures $\ga_k \le 1$. A natural drawback of this step size choosing scheme is that we must have knowledge of the Lipschitz constants $\Gamma, L$, and the bound of the merit parameter $\taum$, which \mo{may be} unreasonable in practice. Practically, one could consider adaptive parameter choosing schemes such as AdaGrad or \at{Adam} style stepsizes, which will yield similar complexity results with additional logarithmic terms \mo{\cite{o2024two, wang2026projected}}.

\begin{lemma}\label{lem: 3.9}
Let Assumptions \ref{assumption1}, \ref{ass: assumption on Hk}, \ref{ass2}, \mo{and \ref{ass:taumin}} hold, then 
\begin{equation*}
    \bE_k[\alpha_k \taum \nabla f(x_k)^T(d_k - \dktrue)] \le \beta_k^2\theta \taum \kappa_g \zeta^{-1}\sqrt{M}.
\end{equation*}
\end{lemma}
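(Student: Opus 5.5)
The idea is to split the random stepsize $\alpha_k$ into its deterministic lower endpoint plus a small perturbation, so that only the perturbation interacts with the stochastic error. Since $\alpha_k \in [\nu, \nu+\theta\beta_k]$ by \cref{alpha choice: interval}, we write $\alpha_k = \nu + (\alpha_k - \nu)$ with $0 \le \alpha_k - \nu \le \theta\beta_k$. This gives
\begin{equation*}
\bE_k[\alpha_k \taum \nabla f(x_k)^T(d_k-\dktrue)] = \nu\taum \nabla f(x_k)^T\bE_k[d_k-\dktrue] + \taum\bE_k[(\alpha_k-\nu)\nabla f(x_k)^T(d_k-\dktrue)].
\end{equation*}
Here we use that $\nu$, $\taum$, $\nabla f(x_k)$ and $\dktrue$ are all determined by $x_k$; $\dktrue$ depends only on $x_k$, $H_k$, $v_k$ and the deterministic $\beta_k$.

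For the first term, \cref{lemma: reduced KKT} gives $\bE_k[d_k]=\dktrue$, so the term vanishes. Note that this needs $\beta_k$ to be independent of $g_k$, which is the convention already used in that lemma.

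For the second term, we bound the integrand pointwise. By Cauchy--Schwarz and the bound $\|\nabla f(x_k)\|\le\kappa_g$,
\begin{equation*}
(\alpha_k-\nu)\nabla f(x_k)^T(d_k-\dktrue) \le |\alpha_k-\nu|\,\|\nabla f(x_k)\|\,\|d_k-\dktrue\| \le \theta\beta_k\kappa_g\|d_k-\dktrue\|.
\end{equation*}
Taking conditional expectation and applying the last claim of \cref{lemma: reduced KKT}, namely $\bE_k[\|d_k-\dktrue\|]\le\beta_k\zeta^{-1}\sqrt{M}$, yields $\taum\theta\beta_k\kappa_g\beta_k\zeta^{-1}\sqrt{M}$. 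This is exactly the claimed bound $\beta_k^2\theta\taum\kappa_g\zeta^{-1}\sqrt M$.

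The main subtlety is that $\alpha_k$ may itself depend on $g_k$, since it is only required to lie in the interval. For that reason we cannot pull $\alpha_k$ out of the expectation. This is precisely why the decomposition isolates the $g_k$-independent part $\nu$, which can be factored out, and handles the remainder by an absolute-value bound that uses only the interval width $\theta\beta_k$. Everything else is a direct combination of \cref{lemma: reduced KKT} with Assumption \ref{assumption1}.
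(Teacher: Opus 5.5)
Your proof is correct and follows essentially the same route as the paper. The paper writes $\alpha_k = \nu + \theta\xi_k\beta_k$ with $\xi_k\in[0,1]$, kills the $\nu$-term via $\bE_k[d_k]=\dktrue$ from \cref{lemma: reduced KKT}, and bounds the remainder with Cauchy--Schwarz and $\bE_k[\|d_k-\dktrue\|]\le\beta_k\zeta^{-1}\sqrt{M}$. Your pointwise bound keeps the factor $\theta\beta_k$ explicit throughout, whereas the paper's displayed chain drops it in an intermediate line before restoring $\beta_k^2$ at the end.
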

\begin{proof}
    Let $\xi_k \in[0, 1]$ be a random variable such that $\ga_k = \nu +\theta\xi_k\beta_k$. Then from \cref{lemma: reduced KKT},
    \begin{align*}
        & \quad \ \bE_k[\alpha_k \taum \nabla f(x_k)^T(d_k - \dktrue)] \\
        & = \bE_k [(\nu +\theta\xi_k\beta_k)\taum \nabla f(x_k)^T(d_k - \dktrue)] \\
        & = \nu \taum\nabla f(x_k)^T\bE[d_k - \dktrue] + \bE_k[\theta \xi_k \gb_k\taum \nabla f(x_k)^T(d_k - \dktrue)] \\ 
        & \le \theta\taum \bE_k[\|\nabla f(x_k)\|\|d_k-\dktrue\|] \\
        & \le \beta_k^2\theta\taum \kappa_g \zeta^{-1} \sqrt{M} .
    \end{align*}
\end{proof}

With all \mo{of these results} in hand, we can prove our first main theorem, which is stated generically for any choice of $\beta_k$.

\begin{theorem} \label{thm: taum exists convergence}
    Let Assumptions \ref{assumption1}, \ref{ass: assumption on Hk}, \ref{ass2}, \mo{and \ref{ass:taumin}} hold. Choose $\nu$ as in \cref{eq: define nu}, $\ga_k$ as in \cref{alpha choice: interval} and let 
\begin{align*}
\kappa_1 := \frac{1}{2}(\taum L + \Gamma)(\zeta^{-1} \kappa_H\kappa_u^2 + \zeta^{-2} M) + \theta \kappa_g \taum \zeta^{-1} \sqrt{M} + \theta^2 (\taum L +\Gamma) \omega \kappa_J\kappa_c.
\end{align*}
Then we have, for all $K\in \bN$,
\begin{align*}
    &\quad \sum_{k=0}^{K-1} \bE [\alpha_k\beta_k\taum (\uktrue)^TH_k\uktrue + \frac{1}{2} {\alpha_k\sigma\kappa_v\kappa_c^{-1}} \|J_k^Tc_k\|^2] \\ 
    & \le \taum(f_0 -  f_{\inf})+ \taum \|c_0\|+ \kappa_1 \sum_{k=0}^{K-1}\beta_k^2.
\end{align*}
\end{theorem}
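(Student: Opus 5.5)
We take conditional expectations in the descent inequality of \cref{lemma: generic descent}, bound every error term by a multiple of $\beta_k^2$ or absorb it into the model reduction, and telescope the result. Since $\nu \le 1-\theta\kappa_\beta$, we have $\alpha_k\le 1$, so \cref{lemma: generic descent} applies. Into \cref{eq: phi reduction} we substitute the lower bound \cref{eq: model reduction and ukHkuk} for $\Delta l(x_k,\taum,\nabla f(x_k),\dktrue)$. This gives
\begin{equation*}
    \phi(x_{k+1},\taum)-\phi(x_k,\taum)\le -\alpha_k\taum\beta_k(\uktrue)^TH_k\uktrue-\alpha_k\sigma(\|c_k\|-\|c_k+J_kv_k\|)+R_k,
\end{equation*}
where $R_k$ collects the three error terms: the $\|u_k\|^2$ term, the $\|v_k\|^2$ term, and the noise term.

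Next we convert the feasibility terms. When $\|c_k\|>0$, \cref{lemma: constraint model reduction} together with $\|c_k\|\le\kappa_c$ gives $\|c_k\|-\|c_k+J_kv_k\|\ge \kappa_v\kappa_c^{-1}\|J_k^Tc_k\|^2$. When $c_k=0$, both sides vanish, so the bound holds in all cases. For the $v_k$-term we use \cref{lemma: norm vk}, $\|v_k\|^2\le\omega^2\|J_k^Tc_k\|^2$, and the random stepsize bound $\alpha_k\le\nu+\theta\beta_k$. This gives $\alpha_k^2\le \alpha_k\nu+\alpha_k\theta\beta_k$. By \cref{eq: define nu}, the part $\frac{\alpha_k\nu}{2}(\taum L+\Gamma)\omega^2\|J_k^Tc_k\|^2$ is at most $\frac{\alpha_k}{4}\sigma\kappa_v\kappa_c^{-1}\|J_k^Tc_k\|^2$. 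The part $\frac{\alpha_k\theta\beta_k}{2}(\taum L+\Gamma)\omega^2\|J_k^Tc_k\|^2$ is handled by expanding $\alpha_k\le\nu+\theta\beta_k$ once more. Its $\nu$-piece is a cross term $\nu\theta\beta_k(\cdots)\|J_k^Tc_k\|^2$. This is the main obstacle, since it is linear rather than quadratic in $\beta_k$.

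We plan to handle the cross term by splitting $\|J_k^Tc_k\|^2\le\kappa_J\kappa_c\|J_k^Tc_k\|$ in one factor and applying Young's inequality: half of the resulting term is absorbed into the remaining $\frac{\alpha_k}{4}\sigma\kappa_v\kappa_c^{-1}\|J_k^Tc_k\|^2$, and the other half produces a $\beta_k^2$ multiple. The constant $\theta^2(\taum L+\Gamma)\omega\kappa_J\kappa_c$ in $\kappa_1$ suggests exactly this kind of bound. The leftover piece $\theta^2\beta_k^2(\cdots)\|J_k^Tc_k\|^2$ is bounded directly by the constants. After these absorptions at least $\frac12\alpha_k\sigma\kappa_v\kappa_c^{-1}\|J_k^Tc_k\|^2$ remains on the descent side. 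If the constants do not line up, we fall back on shrinking $\nu$ slightly; this does not change the form of the result.

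For the $u_k$-term, $\alpha_k\le1$ and \cref{lem: upper bound of norm u_k} give
\begin{equation*}
    \bE_k\left[\tfrac{\alpha_k^2\beta_k^2}{2}(\taum L+\Gamma)\|u_k\|^2\right]\le\tfrac{\beta_k^2}{2}(\taum L+\Gamma)\bigl(\zeta^{-2}M+\zeta^{-1}\kappa_H\kappa_u^2\bigr),
\end{equation*}
using $(\uktrue)^TH_k\uktrue\le\kappa_H\kappa_u^2$ from \cref{lemma2: u_k norm}. The noise term is bounded by \cref{lem: 3.9}. Adding these contributions gives $\bE_k[\phi(x_{k+1},\taum)]-\phi(x_k,\taum)\le -\bE_k[\alpha_k\beta_k\taum(\uktrue)^TH_k\uktrue+\frac12\alpha_k\sigma\kappa_v\kappa_c^{-1}\|J_k^Tc_k\|^2]+\kappa_1\beta_k^2$.

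Finally, we take total expectations and sum over $k=0,\dots,K-1$. The left-hand side telescopes to $\bE[\phi(x_K,\taum)]-\phi(x_0,\taum)\ge \taum f_{\inf}-\taum f_0-\|c_0\|$, since $\phi(x_K,\taum)\ge\taum f_{\inf}$. This yields the claimed inequality, with the constant term $\taum(f_0-f_{\inf})+\|c_0\|$ in the form stated (possibly up to the weight placed on $\|c_0\|$).
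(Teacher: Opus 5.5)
Your proposal is correct and is essentially the paper's proof: conditional expectation of the generic descent lemma, the lower bound on $\Delta l$ from Assumption~\ref{ass:taumin}, the bounds on $\bE_k[\|u_k\|^2]$ and on the noise term, absorption of the $\|v_k\|^2$ term via the choice of $\nu$, and telescoping; the one difference is that the paper never creates your cross term, because it bounds $\frac{1}{2}\alpha_k^2\le\frac{1}{2}(\nu+\theta\beta_k)^2\le\nu^2+\theta^2\beta_k^2$ directly, absorbs the $\nu^2$ part into $-\alpha_k\sigma(\|c_k\|-\|c_k+J_kv_k\|)$ (kept in that form because it is reused in the LICQ case), and sends the $\theta^2\beta_k^2$ part into $\kappa_1$, so your Young's-inequality step is valid but unnecessary and no shrinking of $\nu$ is needed. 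Your caution about constants is warranted: both your argument and the paper's produce $\|c_0\|$ rather than $\taum\|c_0\|$ (the paper's proof ends with $\phi(x_0,\taum)-\taum f_{\inf}$), and they produce a term $\theta^2(\taum L+\Gamma)\omega^2\kappa_J^2\kappa_c^2$ rather than $\theta^2(\taum L+\Gamma)\omega\kappa_J\kappa_c$ in $\kappa_1$ (it comes from bounding $\|v_k\|^2$ by $\omega^2\kappa_J^2\kappa_c^2$), so these look like typos in the statement rather than hints at a cross-term estimate you need to reproduce.
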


\begin{proof}
    Taking the conditional expectation \mo{of the $k$-th iterate} on both sides of \eqref{eq: phi reduction} \mo{and applying the results of Lemmas \ref{lem: upper bound of norm u_k} and \ref{lem: 3.9}}, we have 
    \begin{align*}
        &\quad \ \bE_k[\phi (x_k +\alpha_k d_k, \taum)] - \phi(x_k, \taum) \\
        & \le \bE_k[-\alpha_k \Delta l(x_k, \taum, \nabla f(x_k), \dktrue)] +\frac{\alpha_k^2\beta_k^2}{2}(\taum L + \Gamma) \bE_k[\|u_k\|^2] \\
        & \qquad+\frac{\alpha_k^2}{2}(\taum L + \Gamma)\|v_k\|^2 + \bE_k[\alpha_k \taum \nabla f(x_k)^T(d_k - \dktrue)] \\
        & \le  -\alpha_k \Delta l(x_k, \taum, \nabla f(x_k), \dktrue)  + (\theta \kappa_g \taum \zeta^{-1} \sqrt{M}) \beta_k^2 + \frac{\alpha_k^2}{2}(\taum L + \Gamma) \|v_k\|^2 \\
        & \qquad+ \frac{\alpha_k^2\beta_k^2}{2}(\taum L + \Gamma)(\zeta^{-1} (\uktrue)^T H_k \uktrue + \zeta^{-2} M) \\
        & \le -\alpha_k \Delta l(x_k, \taum, \nabla f(x_k), \dktrue)  + (\nu^2 + \theta^2 \beta_k^2) (\taum L + \Gamma) \|v_k\|^2 \\ 
        & \qquad+ \beta_k^2 [\frac{\alpha_k^2}{2}(\taum L + \Gamma)(\zeta^{-1} (\uktrue)^T H_k \uktrue + \zeta^{-2} M) + \theta \kappa_g \taum \zeta^{-1} \sqrt{M}]\\
        & \le  -\alpha_k \Delta l(x_k, \taum, \nabla f(x_k), \dktrue) + \beta_k^2 \kappa_1 + \nu^2 (\taum L + \Gamma) \|v_k\|^2.
    \end{align*}
From \cref{eq: model reduction and ukHkuk} and Lemma \ref{lemma: constraint model reduction}, we have
    \begin{align}
        & \quad \ \bE_k[\phi (x_k +\alpha_k d_k, \taum)] - \phi(x_k, \taum) \notag \\
        & \le -\alpha_k (\tau_{\min}\beta_k (\uktrue)^TH_k\uktrue + \sigma (\|c_k\| - \|c_k+J_k\mo{v_k}\|)\mo{)} \nonumber \\
        & \qquad+ \beta_k^2 \kappa_1  + \nu^2 (\taum L + \Gamma) \|v_k\|^2  \notag \\
        & \le -\alpha_k\tau_{\min}\beta_k (\uktrue)^TH_k\uktrue + \beta_k^2\kappa_1  \nonumber \\ 
        & \qquad + (-\alpha_k\sigma + \nu^2(\taum L+\Gamma)\frac{\omega^2\kappa_c}{\kappa_v})(\|c_k\| - \|c_k+J_k\mo{v_k}\|)  \notag \\
        & \quad \Big( \text{Since} \,  \nu\le \min{\Big\{ \frac{\sigma\kappa_v}{2(\taum L+\Gamma)\omega^2\kappa_c}, 1-\theta\kappa_\beta
        \Big\}}\Big) \notag \\
        & \le -\alpha_k\tau_{\min}\beta_k (\uktrue)^TH_k\uktrue + \beta_k^2\kappa_1-\frac{1}{2}{\alpha_k\sigma}(\|c_k\| - \|c_k+J_k\mo{v_k}\|) \label{eq: used in LICQ equation} \\
        & \le -\alpha_k\tau_{\min}\beta_k (\uktrue)^TH_k\uktrue + \beta_k^2\kappa_1-\frac{1}{2}{\alpha_k\sigma\kappa_v}\kappa_c^{-1}\|J_k^Tc_k\|^2. \notag
    \end{align}
    Taking the total expectation, rearranging and summing the inequalities from $k=0$ to $K-1$,
    \begin{align*}
        & \quad \sum_{k=0}^{K-1} \bE [\alpha_k\beta_k\taum (\uktrue)^TH_k\uktrue + \frac{1}{2}\alpha_k\sigma\kappa_v\kappa_c^{-1}\|J_k^Tc_k\|^2] \\
        & \le \phi(x_0, \taum) - \bE[\phi(x_K, \taum)]+ \kappa_1 \sum_{k=0}^{K-1} \beta_k^2.
    \end{align*}
    Since $-\bE[\phi(x_K, \taum)]=-\bE [\taum f(x_K)+\|c_K\|] \le -\taum f_{\inf}$, we can conclude that 
    \begin{align*}
        &\quad \sum_{k=0}^{K-1} \bE [\alpha_k\beta_k\taum (\uktrue)^TH_k\uktrue + \frac{1}{2}\alpha_k\sigma\kappa_v\kappa_c^{-1}\|J_k^Tc_k\|^2] \\ 
        & \le \phi(x_0, \taum) - \taum f_{\inf}+ \kappa_1 \sum_{k=0}^{K-1}\beta_k^2.
    \end{align*}
\end{proof}

\Cref{thm: taum exists convergence} shows that \mo{under Assumption \ref{ass:taumin}} and $\sum\beta_k^2 < \infty$, the infeasibility measure $\|J_k^Tc_k\|$ converges in expectation. The convergence rate is determined by problem-specific constants, such as $L$ and $\Gamma$, and the starting point $x_0$. In fact, if the exact order of $\{\gb_k\}$ is known, we can obtain a worst-case complexity for $\bE\|J_k^Tc_k\|$ \mo{as well as $\bE\|\nabla f(x_k) + J_k^T \yktrue\|$}.

\begin{corollary}\label{cor: Jkck2 complexity}
    \mo{Let the assumptions of \Cref{thm: taum exists convergence} hold.} For any $K\in \bN$, let $\beta_k := \beta = \eta / \sqrt K $ for all $k\in[0, K-1]$, where $\eta >0$. Let $\kappa_2 := \taum (f_0 - f_{\inf} ) +\|c_0\| + \eta^2 \kappa_1$, then 
    $$
    \frac{1}{K} \sum_{k=0}^{K-1} \bE [\|J_k^T c_k\|^2] \le \frac{2\kappa_c\kappa_2}{\nu \sigma \kappa_v K}.
    $$
\end{corollary}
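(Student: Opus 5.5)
This is a direct specialization of \Cref{thm: taum exists convergence} to the constant choice $\beta_k = \eta/\sqrt{K}$. The plan is to discard the nonnegative $(\uktrue)^TH_k\uktrue$ term on the left-hand side, lower bound $\alpha_k$ by $\nu$, and evaluate $\sum \beta_k^2$ explicitly.

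\textbf{Step 1: handle the left-hand side.} Since $\uktrue \in \text{Null}(J_k)$, Assumption \ref{ass: assumption on Hk} gives
\[
(\uktrue)^TH_k\uktrue \ge \zeta\|\uktrue\|^2 \ge 0 .
\]
Also $\alpha_k,\beta_k,\taum > 0$. Hence the term $\alpha_k\beta_k\taum(\uktrue)^TH_k\uktrue$ can be dropped from the left-hand side of the theorem's inequality. By \cref{alpha choice: interval}, $\alpha_k \ge \nu$ deterministically, so the remaining sum is bounded below by
\[
\tfrac12 \nu\sigma\kappa_v\kappa_c^{-1}\sum_{k=0}^{K-1}\bE\|J_k^Tc_k\|^2 .
\]

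\textbf{Step 2: handle the right-hand side.} With $\beta_k = \eta/\sqrt K$ we get $\sum_{k=0}^{K-1}\beta_k^2 = \eta^2$, so the right-hand side equals
\[
\taum(f_0-f_{\inf}) + \taum\|c_0\| + \eta^2\kappa_1 .
\]
To reach $\kappa_2$, I will bound $\taum\|c_0\|$ by $\|c_0\|$. The theorem's own proof actually ends with $\phi(x_0,\taum)-\taum f_{\inf} = \taum(f_0-f_{\inf}) + \|c_0\|$, which is exactly $\kappa_2$, so I will cite that form. If the stated form is used instead, I will note that $\taum \le \tau_{-1}$, which is $\le 1$ under the standard initialization $\tau_{-1}\le 1$. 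Either way the right-hand side is at most $\kappa_2$.

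\textbf{Step 3: combine.} Dividing both sides by $K\cdot\tfrac12\nu\sigma\kappa_v\kappa_c^{-1}$ gives
\[
\frac1K\sum_{k=0}^{K-1}\bE\|J_k^Tc_k\|^2 \le \frac{2\kappa_c\kappa_2}{\nu\sigma\kappa_v K},
\]
which is the claim.

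One check is still needed: the condition $\nu \le 1-\theta\kappa_\beta$ from \cref{eq: define nu} requires $\kappa_\beta \ge \eta/\sqrt K$. This holds for every $K \ge 1$ if $\kappa_\beta$ is taken to be $\eta$.

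The main obstacle is only the bookkeeping of the $\|c_0\|$ versus $\taum\|c_0\|$ constant. Everything else is immediate.
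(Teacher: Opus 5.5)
Your proposal is correct and matches the paper's proof: drop the nonnegative $\alpha_k\beta_k\taum(\uktrue)^TH_k\uktrue$ term via Assumption~\ref{ass: assumption on Hk}, use $\alpha_k\ge\nu$ and $\sum_{k=0}^{K-1}\beta_k^2=\eta^2$, and divide by $\tfrac12\nu\sigma\kappa_v\kappa_c^{-1}K$. You are also right to take the constant from the end of the theorem's proof, $\phi(x_0,\taum)-\taum f_{\inf}=\taum(f_0-f_{\inf})+\|c_0\|$, since the theorem statement's $\taum\|c_0\|$ does not match it; the paper's proof of the corollary uses this form too, so your fallback via $\taum\le 1$, which the paper never assumes, is not needed.
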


\begin{proof}
    From \cref{thm: taum exists convergence}, the definition of $\beta_k$, \mo{Assumption \ref{ass: assumption on Hk}}, and the fact that $\nu \le \alpha_k$ for all $k\in\bN$, we have 
    \begin{align*}
        \frac{1}{K} \sum_{k=0}^{K-1} \bE [\|J_k^T c_k\|^2] \le \frac{2\kappa_c(\taum (f_0 - f_{\inf}) + \|c_0\| + \kappa_1 \sum \beta_k^2)}{\nu \sigma \kappa_v K} \le \frac{2\kappa_c\kappa_2}{\nu\sigma\kappa_v K}.
    \end{align*}
\end{proof}

\begin{corollary}\label{cor: reduced gradient complexity}
    \mo{Let the assumptions of \Cref{thm: taum exists convergence} hold.} For any $K\in\bN$, let $\beta_k := \beta = \eta / \sqrt K $ for all $k\in[0, K-1]$, where $\eta >0$. Let $\yktrue$ be defined in \eqref{eq:yktrue}. Then,
    \begin{equation} \label{eq:complgradl}
        \frac{1}{K}\sum_{k=0}^{K-1} \bE[\|\nabla f(x_k) + J_k^Ty_k^\text{true}\|^2] \le \frac{2\kappa_H^2\kappa_2}{\nu\eta\taum\zeta\sqrt{K}} + \frac{4\kappa_H^2 \kappa_c\kappa_0\kappa_2}{\nu \sigma\kappa_v \zeta K}.
    \end{equation}
\end{corollary}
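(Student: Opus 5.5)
The plan is to chain \cref{lemma: uHutrue with KKT measure no LICQ} with \cref{thm: taum exists convergence} and then reuse \cref{cor: Jkck2 complexity} to handle the negative $\|J_k^Tc_k\|^2$ term. First, I would rearrange \cref{lemma: uHutrue with KKT measure no LICQ} into
\begin{equation*}
\|\nabla f(x_k) + J_k^T\yktrue\|^2 \le \frac{2\kappa_H^2}{\zeta}\left((\uktrue)^TH_k\uktrue + \kappa_0\|J_k^Tc_k\|^2\right),
\end{equation*}
which holds pointwise for every $k$. Taking total expectations and averaging over $k=0,\dots,K-1$ splits the left-hand side of \eqref{eq:complgradl} into a term involving $\frac{1}{K}\sum_k \bE[(\uktrue)^TH_k\uktrue]$ and a term involving $\frac{1}{K}\sum_k\bE[\|J_k^Tc_k\|^2]$.

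For the first term, I would drop the nonnegative $\|J_k^Tc_k\|^2$ contribution in \cref{thm: taum exists convergence}. I would also use $\alpha_k\ge\nu$, $\beta_k=\eta/\sqrt K$, and $(\uktrue)^TH_k\uktrue\ge 0$ (by Assumption \ref{ass: assumption on Hk}, since $\uktrue\in\text{Null}(J_k)$). This gives
\begin{equation*}
\nu\frac{\eta}{\sqrt K}\taum\sum_{k=0}^{K-1}\bE[(\uktrue)^TH_k\uktrue]\le \taum(f_0-f_{\inf})+\taum\|c_0\|+\kappa_1\eta^2.
\end{equation*}
Here $\sum_k\beta_k^2=\eta^2$. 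The right-hand side should be bounded by $\kappa_2$; this needs $\taum\|c_0\|\le\|c_0\|$, i.e.\ $\taum\le 1$. If necessary I would note that $\taum$ may be taken at most $1$ (shrinking $\taum$ preserves \eqref{eq:taumin} whenever its left side is nonnegative; otherwise one simply accepts $\kappa_2$ as stated, following how the corollary above was written). Dividing by $K\nu\eta\taum/\sqrt K$ then gives $\frac1K\sum_k\bE[(\uktrue)^TH_k\uktrue]\le \kappa_2/(\nu\eta\taum\sqrt K)$. Multiplying by $2\kappa_H^2/\zeta$ yields the first term of \eqref{eq:complgradl}.

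For the second term, I would apply \cref{cor: Jkck2 complexity} directly: $\frac1K\sum_k\bE\|J_k^Tc_k\|^2\le 2\kappa_c\kappa_2/(\nu\sigma\kappa_v K)$. Multiplying by $2\kappa_H^2\kappa_0/\zeta$ gives $4\kappa_H^2\kappa_c\kappa_0\kappa_2/(\nu\sigma\kappa_v\zeta K)$, matching the second term. Adding the two bounds completes the argument.

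The only delicate step is the bookkeeping in the first term. The theorem's constant carries $\taum\|c_0\|$ while $\kappa_2$ carries $\|c_0\|$, so I need $\taum\le1$ (or the looser inequality) to absorb it. I also need the nonnegativity of $(\uktrue)^TH_k\uktrue$ so that discarding terms is legitimate. Everything else is direct substitution.
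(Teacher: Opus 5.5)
Your proposal is correct and follows essentially the paper's argument: combine \cref{lemma: uHutrue with KKT measure no LICQ} with the bound from \cref{thm: taum exists convergence}, and control the leftover $\|J_k^Tc_k\|^2$ term with \cref{cor: Jkck2 complexity}. Rearranging the lemma pointwise before averaging, as you do, lets you apply that corollary verbatim without carrying the $\alpha_k$ weights. The $\taum\le 1$ detour is unnecessary, because the $\taum\|c_0\|$ in the statement of \cref{thm: taum exists convergence} is a typo: its proof actually ends with $\phi(x_0,\taum)-\taum f_{\inf}+\kappa_1\sum_k\beta_k^2=\taum(f_0-f_{\inf})+\|c_0\|+\kappa_1\eta^2=\kappa_2$, which is exactly the form the paper's proof of this corollary uses.
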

\begin{proof}
 Similarly to \cref{cor: Jkck2 complexity}, we know 
    \begin{equation*}
        \sum_{k=0}^{K-1} \bE [\alpha_k\beta_k\taum (\uktrue)^TH_k\uktrue] \le \phi(x_0, \taum) - \taum f_{\inf}+ \kappa_1 \sum_{k=0}^{K-1}\beta_k^2.
    \end{equation*}
    From \cref{lemma: uHutrue with KKT measure no LICQ}, it follows that
    \begin{align*}
        & \quad \ \sum_{k=0}^{K-1} \bE [\frac{\alpha_k\beta_k\taum \zeta }{2\kappa_H^{2}} \|\nabla f(x_k) + J_k^Ty_k^\text{true}\|^2] \\
        & \le \phi(x_0, \taum) - \taum f_{\inf}+ \kappa_1 \sum_{k=0}^{K-1}\beta_k^2 + \kappa_0 \sum_{k=0}^{K-1} \bE[\alpha_k\beta_k\taum \|J_k^T c_k\|^2].
    \end{align*}
    Substituting $\beta_k $ with $\eta/\sqrt{K}$, rearranging the inequality, dividing both sides by $K$, and applying \cref{cor: Jkck2 complexity}, we have 
    \begin{equation*}
        \frac{1}{K}\sum_{k=0}^{K-1} \bE[\|\nabla f(x_k) + J_k^Ty_k^\text{true}\|^2] \le \frac{2\kappa_H^2\kappa_2}{\nu\eta\taum\zeta\sqrt{K}} + \frac{4\kappa_H^2 \kappa_c\kappa_0\kappa_2}{\nu \sigma\kappa_v \zeta K},
    \end{equation*}
    \mo{which proves the result.}
\end{proof}

\Cref{cor: Jkck2 complexity} and \cref{cor: reduced gradient complexity} \mo{prove our first set of} complexity results. It is obvious that for any small tolerance $\epsilon_c, \epsilon_L>0$, to achieve $\bE\|J_k^Tc_k\| \le \epsilon_c$, we need at most $\mathcal{O}(\epsilon_c^{-2})$ iterations. To achieve \mo{$\bE\|\nabla f(x_k) + J_k^T \yktrue\|\le \epsilon_L$}, we need at most $\mathcal{O}(\epsilon_L^{-4})$ iterations. To our knowledge, \mo{the complexity with respect to feasibility represents the best known result in the absence of a constraint qualification}. The latter result, \mo{in terms of $\epsilon_L$}, is known to be \emph{optimal} \mo{\cite{arjevani2023lower}} for stochastic gradient methods to find first-order stationary points under our assumptions.

\subsection{Case II: LICQ holds}\label{sec: when LICQ holds}
In this subsection, we assume that the Linear Independence Constraint Qualification is satisfied for every iteration. The analysis is parallel to Section \ref{sec: when taum and no licq}, however, under this assumption, we are able to prove tighter bounds with respect to convergence in the infeasibility measure and \at{the gradient of the Lagrangian}. It is worth mentioning that most of the existing literature on \mo{complexity results in} constrained optimization assumes LICQ or other types of constraint qualification in their analysis, such as \mo{variants of MFCQ} \cite{boob2025level, shi2025momentum}. We formally state LICQ as an assumption here.

\begin{ass}\label{ass: LICQ}(LICQ)
    For all $k\in\bN$, the constraint Jacobian matrix $J(x_k)$ has full row rank. Equivalently, there exists a positive constant $\sigma_J > 0$ such that $\|J_k^Tc_k\| \ge \sigma_J \|c_k\|$.
\end{ass}

The following lemma is an improved version of \cref{lemma: constraint model reduction}. Since LICQ holds for all iterations, we can now use the exact infeasibility measure $\varphi = \|c(x)\|$. 
\begin{lemma}\label{lemma: model reduction and cknorm}
    Let \mo{Assumptions \ref{assumption1}} and \ref{ass: LICQ} hold for all $k\in \bN$, then with $\|c_k\| > 0$, 
    \begin{equation}
         \|c_k\|-\|c_k+J_kv_k\| \ge \mo{\kappa_v\sigma_J \|J_k^T c_k\|} \ge \kappa_v\sigma_J^2 \|c_k\|.
    \end{equation}
\end{lemma}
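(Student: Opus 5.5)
The plan is to derive this directly from \cref{lemma: constraint model reduction}, using Assumption \ref{ass: LICQ} to turn one factor of $\|J_k^Tc_k\|$ into a multiple of $\|c_k\|$. Fix $k\in\bN$ with $\|c_k\|>0$. \Cref{lemma: constraint model reduction} depends only on Assumption \ref{assumption1} and the Cauchy decrease condition \cref{eq: cauchy decreasee condition}, so it applies here with the same constant $\kappa_v$. It gives
\begin{equation*}
\|c_k\|-\|c_k+J_kv_k\| \ge \frac{\kappa_v\|J_k^Tc_k\|^2}{\|c_k\|} = \kappa_v\,\frac{\|J_k^Tc_k\|}{\|c_k\|}\,\|J_k^Tc_k\|,
\end{equation*}
where we divided by $\|c_k\|>0$.

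The key step is to bound the ratio $\|J_k^Tc_k\|/\|c_k\|$ from below. Assumption \ref{ass: LICQ} states $\|J_k^Tc_k\|\ge\sigma_J\|c_k\|$, so this ratio is at least $\sigma_J$. Substituting gives the first inequality,
\begin{equation*}
\|c_k\|-\|c_k+J_kv_k\| \ge \kappa_v\sigma_J\|J_k^Tc_k\|.
\end{equation*}
Applying the LICQ bound once more to the remaining factor $\|J_k^Tc_k\|$ gives $\kappa_v\sigma_J\|J_k^Tc_k\|\ge\kappa_v\sigma_J^2\|c_k\|$, which is the second inequality.

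I do not expect a real obstacle. The one point to check is that $\kappa_v$ is the uniform constant from \cref{lemma: constraint model reduction}, chosen independently of $k$ using the bounds on $\|J_k\|$ and $\|c_k\|$ from Assumption \ref{assumption1}, and that it is unaffected by the LICQ assumption. The resulting constants $\kappa_v\sigma_J$ and $\kappa_v\sigma_J^2$ are therefore also uniform in $k$. The case $\|c_k\|=0$ is excluded by hypothesis, and it is only needed to justify the division.
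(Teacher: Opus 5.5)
Your proposal is correct and follows essentially the same argument as the paper: invoke \cref{lemma: constraint model reduction}, apply the LICQ bound $\|J_k^Tc_k\|\ge\sigma_J\|c_k\|$ to one factor (and then again), and use $\|c_k\|>0$ to divide. The only cosmetic difference is that the paper chains the inequalities on $\|c_k\|(\|c_k\|-\|c_k+J_kv_k\|)$ first and divides by $\|c_k\|$ at the end, whereas you divide first.
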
 
\begin{proof}
    From \cref{lemma: constraint model reduction}, 
    \begin{equation*}
        \|c_k\|(\|c_k\|-\|c_k+J_kv_k\|) \ge \kappa_v\|J_k^Tc_k\|^2 \ge \mo{\kappa_v\sigma_J \|J_k^T c_k\| \|c_k\|} \ge \kappa_v \sigma_J^2 \|c_k\|^2.
    \end{equation*}
    Dividing both sides by $\|c_k\|$ yields \mo{both results}.
\end{proof}

The following lemma shows that LICQ serves as a sufficient condition for the existence of $\taum > 0$, \mo{thus implying that Assumption \ref{ass:taumin} holds throughout this subsection}.

\begin{lemma}\label{lemma: taumin exists}
    Let Assumptions \ref{assumption1}, \ref{ass: assumption on Hk}, \ref{ass2} and \ref{ass: LICQ} hold and let $\sigma \in (0, 1)$ and $\beta_k \le \kappa_\beta$ hold for all $k\in \bN$. Then, for all $k \in \bN$, there exists such $\tau_{\min}:= \frac{(1-\sigma)\kappa_v\sigma_J}{\omega(\kappa_\beta\kappa_H \kappa_u + \kappa_g)}$ satisfying 
    \begin{equation*}
        \tau_{\min} (\nabla f(x_k)^T\dktrue + \beta_k (\uktrue)^TH_k \uktrue) \le (1-\sigma) (\|c_k\| - \|c_k+J_k\mo{v_k}\|).
    \end{equation*}
\end{lemma}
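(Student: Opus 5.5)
Our plan is to bound the denominator quantity $\nabla f(x_k)^T\dktrue + \beta_k(\uktrue)^TH_k\uktrue$ from above by a multiple of $\|J_k^Tc_k\|$, and the numerator $\|c_k\|-\|c_k+J_kv_k\|$ from below by a multiple of the same quantity. The key structural fact is that when $\|c_k\|=0$ the left-hand side should be nonpositive, while for $\|c_k\|>0$ LICQ together with \cref{lemma: model reduction and cknorm} gives linear (not quadratic) decrease in $\|J_k^Tc_k\|$.

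First I would rewrite the left-hand quantity using the optimality conditions \cref{eq: true KKT system}. Since $\dktrue=\beta_k\uktrue+v_k$,
\[
\nabla f(x_k)^T\dktrue + \beta_k(\uktrue)^TH_k\uktrue = \beta_k\bigl(\nabla f(x_k)+H_k\uktrue\bigr)^T\uktrue + \nabla f(x_k)^Tv_k .
\]
Because $\uktrue\in\text{Null}(J_k)$, the first KKT row gives $(\nabla f(x_k)+H_kv_k+H_k\uktrue)^T\uktrue = -(\yktrue)^TJ_k\uktrue = 0$. Hence the first term equals $-\beta_k v_k^TH_k\uktrue$. The whole quantity is therefore
\[
-\beta_k v_k^TH_k\uktrue+\nabla f(x_k)^Tv_k \le (\kappa_\beta\kappa_H\kappa_u+\kappa_g)\|v_k\| \le \omega(\kappa_\beta\kappa_H\kappa_u+\kappa_g)\|J_k^Tc_k\|,
\]
using \cref{lemma2: u_k norm} for $\|\uktrue\|\le\kappa_u$, the bound $\beta_k\le\kappa_\beta$, and the trust-region bound $\|v_k\|\le\omega\|J_k^Tc_k\|$ from \cref{lemma: norm vk}.

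Next I would split into cases. If the left-hand quantity is nonpositive, the inequality holds trivially, since the right side is nonnegative: $v=0$ is feasible in \cref{eq: subproblem v_k}, and the Cauchy condition gives a nonnegative decrease. This covers $\|c_k\|=0$, because then $J_k^Tc_k=0$, so $v_k=0$ and the quantity vanishes. Otherwise $\|c_k\|>0$, and \cref{lemma: model reduction and cknorm} gives $\|c_k\|-\|c_k+J_kv_k\|\ge\kappa_v\sigma_J\|J_k^Tc_k\|$. Multiplying the upper bound by $\tau_{\min}=\frac{(1-\sigma)\kappa_v\sigma_J}{\omega(\kappa_\beta\kappa_H\kappa_u+\kappa_g)}$ yields $(1-\sigma)\kappa_v\sigma_J\|J_k^Tc_k\|$, which is at most $(1-\sigma)(\|c_k\|-\|c_k+J_kv_k\|)$, as required.

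The main obstacle is the algebraic cancellation in the first step. A naive bound on $\beta_k(\uktrue)^TH_k\uktrue$ by $\kappa_\beta\kappa_H\kappa_u^2$ would not be proportional to $\|J_k^Tc_k\|$ and would break the argument. The cancellation against $\beta_k\nabla f(x_k)^T\uktrue$ through the null-space KKT condition is what makes the bound scale with $\|v_k\|$. This is also why the stated constant contains $\kappa_\beta\kappa_H\kappa_u$.
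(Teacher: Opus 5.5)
Your proof is correct and follows the paper's argument. Both use the KKT null-space cancellation to reduce the quantity to $-\beta_k v_k^TH_k\uktrue + \nabla f(x_k)^Tv_k$, bound it by $\omega(\kappa_\beta\kappa_H\kappa_u+\kappa_g)\|J_k^Tc_k\|$, and invoke \cref{lemma: model reduction and cknorm} for the lower bound. Your explicit handling of $\|c_k\|=0$ (where $v_k=0$ and both sides vanish) is a small gain in rigor, since the paper applies \cref{lemma: model reduction and cknorm} without noting its hypothesis $\|c_k\|>0$.
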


\begin{proof}
    By the definition of $\dktrue$,
    \begin{align*}
        \nabla f(x_k)^T\dktrue &= \nabla f(x_k)^T(\beta_k \uktrue + v_k) \\ 
        &= -\beta_k (\uktrue)^TH_k \uktrue - \beta_k v_k^TH_k\uktrue + \nabla f(x_k)^Tv_k.
    \end{align*}
    From the trust-region constraint in subproblem \cref{eq: subproblem v_k} \mo{and Lemma \ref{lemma: model reduction and cknorm}}, we have
    \begin{align*}
        \nabla f(x_k)^T  \dktrue + \beta_k (\uktrue)^TH_k \uktrue 
        &= - \beta_k v_k^TH_k\uktrue + \nabla f(x_k)^Tv_k \\
        &\le (\beta_k \kappa_H \|\uktrue\| + \|\nabla f(x_k)\|) \|v_k\| \\
        &\le (\kappa_\beta\kappa_H \kappa_u + \kappa_g) \omega \|J_k^Tc_k\| \\
        &\le \omega(\kappa_\beta\kappa_H \kappa_u + \kappa_g) \frac{(\|c_k\| - \|c_k+J_k\mo{v_k}\|)}{\kappa_v\sigma_J}.
    \end{align*}
    Multiplying $\tau_{\min}$ on both sides proves the result.
\end{proof}

    

We can now prove the main theorem for Case II.

\begin{theorem}\label{thm: LICQ hold complexity}
    Let Assumptions \ref{assumption1}, \ref{ass: assumption on Hk}, \ref{ass2} and \ref{ass: LICQ} hold for all $k\in \bN$. Choose $\beta_k = \eta/\sqrt{K}$ and $\ga_k$ as in \cref{alpha choice: interval}. Then for any $K\in \bN$,
    \begin{equation} \label{eq: average sum of Eck}
        \frac{1}{K}\sum_{k=0}^{K-1} \bE[\|c_k\|] \le \frac{2\kappa_2}{\nu \sigma\kappa_v \sigma^2_JK},
    \end{equation}
    and
    \begin{equation}
        \frac{1}{K}\sum_{k=0}^{K-1} \bE [\| \nabla f(x_k) + J_k^T\yktrue\|^2] \le \frac{2\kappa_H^2\kappa_2}{\nu \eta \taum \zeta\sqrt{K}} + \frac{4 \kappa_2 \kappa_H^2 \kappa_0 \kappa_J^2 \kappa_c}{\nu\sigma\kappa_v\sigma_J^2\zeta K}.    \end{equation}
\end{theorem}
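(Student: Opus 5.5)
By \cref{lemma: taumin exists}, LICQ guarantees that Assumption \ref{ass:taumin} holds with the explicit constant $\taum$ given there. Hence every ingredient of \cref{thm: taum exists convergence} is available, and we may reuse its per-iteration descent inequality. The plan is to rerun that proof, stopping at the intermediate bound \eqref{eq: used in LICQ equation}:
\begin{equation*}
\bE_k[\phi(x_{k+1},\taum)]-\phi(x_k,\taum)\le -\alpha_k\taum\beta_k(\uktrue)^TH_k\uktrue+\beta_k^2\kappa_1-\tfrac12\alpha_k\sigma(\|c_k\|-\|c_k+J_kv_k\|).
\end{equation*}
At this point, instead of converting the last term into $\|J_k^Tc_k\|^2$ via \cref{lemma: constraint model reduction}, we apply \cref{lemma: model reduction and cknorm}. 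It gives $\|c_k\|-\|c_k+J_kv_k\|\ge\kappa_v\sigma_J^2\|c_k\|$, which is linear rather than quadratic in $\|c_k\|$. When $\|c_k\|=0$ the bound is trivial.

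For the infeasibility estimate, take total expectations and sum over $k=0,\dots,K-1$. Telescope $\phi$ and bound $-\bE[\phi(x_K,\taum)]\le-\taum f_{\inf}$. Then drop the nonnegative term $\alpha_k\beta_k\taum(\uktrue)^TH_k\uktrue$, using Assumption \ref{ass: assumption on Hk} and $\uktrue\in\text{Null}(J_k)$. With $\alpha_k\ge\nu$ and $\sum\beta_k^2=\eta^2$ this yields
\begin{equation*}
\tfrac12\nu\sigma\kappa_v\sigma_J^2\sum_{k}\bE\|c_k\|\le\taum(f_0-f_{\inf})+\|c_0\|+\eta^2\kappa_1\le\kappa_2,
\end{equation*}
using $\taum\|c_0\|\le\|c_0\|$ where needed (or matching the constant $\kappa_2$ as defined). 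Dividing by $K$ gives \eqref{eq: average sum of Eck}.

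For the Lagrangian gradient, start again from the summed inequality, this time keeping the $(\uktrue)^TH_k\uktrue$ term, so that $\sum_k\bE[\alpha_k\beta_k\taum(\uktrue)^TH_k\uktrue]\le\kappa_2$. Apply \cref{lemma: uHutrue with KKT measure no LICQ} to lower bound this term by $\frac{\zeta}{2\kappa_H^2}\|\nabla f(x_k)+J_k^T\yktrue\|^2-\kappa_0\|J_k^Tc_k\|^2$. Move the $\kappa_0$ term to the right-hand side. The only new ingredient relative to \cref{cor: reduced gradient complexity} is how that error term is controlled: we bound $\|J_k^Tc_k\|^2\le\kappa_J^2\|c_k\|^2\le\kappa_J^2\kappa_c\|c_k\|$ and then invoke \eqref{eq: average sum of Eck}. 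Finally, use $\nu\le\alpha_k\le1$ and $\beta_k=\eta/\sqrt K$, and divide by $\nu\eta\taum\zeta K^{1/2}/(2\kappa_H^2)$. This produces the $K^{-1/2}$ term plus the $K^{-1}$ term with constant $4\kappa_2\kappa_H^2\kappa_0\kappa_J^2\kappa_c/(\nu\sigma\kappa_v\sigma_J^2\zeta)$. The factor $\alpha_k\beta_k\taum$ multiplying $\kappa_0$ cancels against the division, leaving exactly $\kappa_0$ times the average of $\bE\|J_k^Tc_k\|^2$.

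The main obstacle is bookkeeping rather than concept. One must make sure \eqref{eq: used in LICQ equation} is invoked before the quadratic conversion, so that the linear rate in $\|c_k\|$ is obtained. One must also track the constants, in particular the factor $\taum$ on $\|c_0\|$ versus its absence in $\kappa_2$, and the factor $2$, so that they match the stated bounds.
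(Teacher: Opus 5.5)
Your proposal is correct and follows the paper's proof essentially step for step: \cref{lemma: taumin exists} activates \eqref{eq: used in LICQ equation}, \cref{lemma: model reduction and cknorm} gives the bound linear in $\|c_k\|$, and you telescope (in fact $\phi(x_0,\taum)-\taum f_{\inf}+\eta^2\kappa_1=\kappa_2$ exactly, so your $\taum\le 1$ hedge is unnecessary), then apply \cref{lemma: uHutrue with KKT measure no LICQ} with $\|J_k^Tc_k\|^2\le\kappa_J^2\kappa_c\|c_k\|$ and \eqref{eq: average sum of Eck}. The claimed exact cancellation of $\alpha_k\beta_k\taum$ needs tightening: it holds only if you first replace $\alpha_k$ by $\nu$ in the nonnegative term $\alpha_k\beta_k\taum(\uktrue)^TH_k\uktrue$ and then apply \cref{lemma: uHutrue with KKT measure no LICQ} with the deterministic multiplier $\nu\beta_k\taum$; bounding $\alpha_k\le 1$ on the $\kappa_0$ term after the fact, as you describe, leaves an extra factor $1/\nu$ in the $K^{-1}$ term (the paper's write-up glosses over the same point).
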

\begin{proof}
    By \cref{lemma: taumin exists}, we know that $\taum > 0$ exists. Therefore, the conditions for \cref{thm: taum exists convergence} are satisfied and thus, by \eqref{eq: used in LICQ equation} and $\beta_k = \eta/\sqrt{K}$,
    \begin{equation*}
        \sum_{k=0}^{K-1} \bE [\alpha_k\beta_k\taum (\uktrue)^TH_k\uktrue + \frac{1}{2}\alpha_k\sigma(\|c_k\| - \|c_k+J_k\mo{v_k}\|)] \le \phi(x_0, \taum) - \taum f_{\inf}+ \kappa_1 \eta^2.
    \end{equation*}
    From \cref{lemma: model reduction and cknorm}, we have 
    \begin{equation*}
            \sum_{k=0}^{K-1} \bE [\alpha_k\beta_k\taum (\uktrue)^TH_k\uktrue + \frac12 \alpha_k \sigma \kappa_v \sigma_J^2\|c_k\|] \le \phi(x_0, \taum) - \taum f_{\inf}+ \kappa_1 \eta^2.
    \end{equation*}
    Therefore, by Assumption \ref{ass: assumption on Hk},
    \begin{equation*}
        \sum_{k=0}^{K-1} \frac{1}{2}\nu\sigma\kappa_v\sigma_J^2 \bE[\|c_k\|] \le \kappa_2.
    \end{equation*}
    Dividing both sides by $K$ yields the first result. Next, from \cref{lemma: uHutrue with KKT measure no LICQ}, we have
    \begin{align*}
        &\sum_{k=0}^{K-1} \bE \left[\frac{\zeta \alpha_k \beta_k \taum}{2\kappa_H^{2}} \|\nabla f(x_k) + J_k^Ty_k^\text{true}\|^2 - \alpha_k\beta_k\taum \kappa_0 \|J_k^T c_k\|^2\right] \\ \le &\sum_{k=0}^{K-1} \bE [\alpha_k\beta_k\taum (\uktrue)^TH_k\uktrue] \le \kappa_2.
    \end{align*}
    Therefore,
    \begin{equation}
        \sum_{k=0}^{K-1} \bE [\| \nabla f(x_k) + J_k^T\yktrue\|^2] \le \frac{2\kappa_H^2\kappa_2\sqrt{K}}{\nu \eta \taum \zeta} + 2 \zeta^{-1} \kappa_H^2 \kappa_0 \kappa_J^2 \kappa_c\sum_{k=0}^{K-1}  \bE[\|c_k\|].
    \end{equation}
    Dividing both sides by $K$ and using \cref{eq: average sum of Eck}, yields
    \begin{equation}\notag
        \frac{1}{K}\sum_{k=0}^{K-1} \bE [\| \nabla f(x_k) + J_k^T\yktrue\|^2] \le \frac{2\kappa_H^2\kappa_2}{\nu \eta \taum \zeta\sqrt{K}} + \frac{4 \kappa_2 \kappa_H^2 \kappa_0 \kappa_J^2 \kappa_c}{\nu\sigma\kappa_v\sigma_J^2\zeta K}. 
    \end{equation}
\end{proof}

Parallel to Corollary \ref{cor: Jkck2 complexity} and \ref{cor: reduced gradient complexity}, we can obtain worst-case complexity results from \at{\cref{thm: LICQ hold complexity}}. To achieve $\bE[\|c_k\|]\le \epsilon_c$, we only need at most $\mathcal{O}(\epsilon_c^{-1})$ iterations. This result matches the complexity of the algorithm proposed in \cite{o2024two} to reach an $\epsilon_c$-feasible point (although it uses a $l_1$-norm and assumes the exact $d_k$ is accessible), which is \mo{the best known result when LICQ holds}. Clearly, the improvement \mo{when compared with the results of Section \ref{sec: when taum and no licq} is directly as a result of Assumption \ref{ass: LICQ}. On the other hand, we still require at most $\mathcal{O}(\epsilon_L^{-4})$ iterations to achieve $\bE[\|\nabla f(x_k)+J_k^T\yktrue\|]\le \epsilon_L$, matching the optimal complexity with respect to this measure.}

\subsection{Case III: Merit parameter sequence vanishes}

For the analysis in the previous subsections, we assumed that $\taum > 0$ exists, even though our algorithm does not directly compute it. Unfortunately, in certain cases, such $\taum$ does not exist, \ie the merit parameter sequence $\{\tau_k\}$ may decrease to 0 in the limit.  \mo{This behavior is fundamentally tied to the existence of a (sub)sequence of iterates along which the constraint} Jacobians tends toward rank deficiency. \mo{In such a case, the best result we can hope for is convergence to an infeasible stationary point, i.e., convergence in $\|J_k^T c_k\|$. In this setting, we can prove our convergence result under the less restrictive choice of $\nu$, given by,
\begin{equation} \label{eq:nuvanishingtau}
    \nu \in \left(0,  \min\left\{ \frac{\kappa_v\kappa_c^{-1}}{2\Gamma\omega^2}, 1-\theta \kappa_\beta\right\}\right].
\end{equation}
}

Armed with this definition, we now derive the fundamental lemma of this section.

\begin{lemma} \label{lemma:c_k-c_k+1new}
    Under Assumptions \ref{assumption1}, \ref{ass: assumption on Hk} and \ref{ass2}, it follows for all $k\in \bN$ that
    \mo{\begin{equation}\label{eq: tauk vanish summing new}
        \bE_k [\|c_k\| - \|c (x_k+\alpha_k d_k)\|] \ge \frac{1}{2}\nu\kappa_v\kappa_c^{-1} \bE_k[\|J_k^Tc_k\|^2] - \kappa_3 \beta_k^2,
    \end{equation}
    where we define $\kappa_3 := \Gamma( \theta^2\kappa_J^2 \kappa_c^2 \omega^2 + \frac{1}{2} (\zeta^{-1}\kappa_H\kappa_u^2 + \zeta^{-2}M)).$}
\end{lemma}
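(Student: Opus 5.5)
This is the constraint-only analogue of the merit-descent computation in \cref{lemma: generic descent} and \cref{thm: taum exists convergence}: repeat that argument with the objective term removed, i.e.\ with $\tau=0$. The plan is to bound $\|c(x_k+\alpha_kd_k)\|$ from above by Lipschitz continuity of $J$, take conditional expectations, control the second-order terms via $\beta_k^2$ and $\nu^2$, and then absorb the $\nu^2\|v_k\|^2$ term using the choice of $\nu$ in \eqref{eq:nuvanishingtau}.

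\textbf{Step 1: pathwise bound.} Since $\alpha_k\le \nu+\theta\kappa_\beta\le 1$, the $\Gamma$-Lipschitz continuity of $J$ and convexity of the norm give
\[
\|c(x_k+\alpha_kd_k)\|\le (1-\alpha_k)\|c_k\|+\alpha_k\|c_k+J_kd_k\|+\tfrac{\Gamma}{2}\alpha_k^2\|d_k\|^2 .
\]
Use $J_kd_k=J_kv_k$ from \cref{eq: Jkdktrue = Jkdk}, and $\|d_k\|^2=\beta_k^2\|u_k\|^2+\|v_k\|^2$ by orthogonality. This yields
\[
\|c_k\|-\|c(x_k+\alpha_kd_k)\|\ge \alpha_k(\|c_k\|-\|c_k+J_kv_k\|)-\tfrac{\Gamma}{2}\alpha_k^2(\beta_k^2\|u_k\|^2+\|v_k\|^2).
\]

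\textbf{Step 2: bound the error terms.}
\begin{itemize}
\item[(a)] Write $\alpha_k=\nu+\theta\xi_k\beta_k$ with $\xi_k\in[0,1]$. Then $\alpha_k^2\le 2\nu^2+2\theta^2\beta_k^2$. (One could try $\alpha_k^2\le\nu^2+\theta^2\beta_k^2$ as in the proof of \cref{thm: taum exists convergence}, but the factor-$2$ version is safe; the constants in $\kappa_3$ suggest that splitting with $\alpha_k\le 1$ in the $u$-term is enough.)
\item[(b)] For the $u$-term, use $\alpha_k\le1$ and \cref{lem: upper bound of norm u_k} together with $(\uktrue)^TH_k\uktrue\le\kappa_H\kappa_u^2$ from \cref{lemma2: u_k norm}. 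This gives $\frac{\Gamma}{2}\alpha_k^2\beta_k^2\bE_k\|u_k\|^2\le\frac{\Gamma}{2}\beta_k^2(\zeta^{-1}\kappa_H\kappa_u^2+\zeta^{-2}M)$.
\item[(c)] For the $v$-term, use $\|v_k\|\le\omega\|J_k^Tc_k\|\le\omega\kappa_J\kappa_c$. The $\theta^2\beta_k^2$ part of $\alpha_k^2$ then produces $\Gamma\theta^2\omega^2\kappa_J^2\kappa_c^2\beta_k^2$, matching $\kappa_3$.
\end{itemize}

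\textbf{Step 3: absorb the $\nu^2$ part.} The remaining piece is of order $\Gamma\nu^2\|v_k\|^2\le\Gamma\nu^2\omega^2\|J_k^Tc_k\|^2$. By \cref{lemma: constraint model reduction} and $\|c_k\|\le\kappa_c$, the linear term satisfies
\[
\alpha_k(\|c_k\|-\|c_k+J_kv_k\|)\ge\nu\kappa_v\kappa_c^{-1}\|J_k^Tc_k\|^2 .
\]
The bound $\nu\le\kappa_v\kappa_c^{-1}/(2\Gamma\omega^2)$ gives $\Gamma\nu^2\omega^2\le\frac12\nu\kappa_v\kappa_c^{-1}$. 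This leaves $\frac12\nu\kappa_v\kappa_c^{-1}\|J_k^Tc_k\|^2$. If $\|c_k\|=0$, both sides are trivially handled since $v_k=0$. Finally, take $\bE_k$; $x_k$ and $v_k$ are $\mathcal F_k$-measurable, so $\bE_k\|J_k^Tc_k\|^2=\|J_k^Tc_k\|^2$.

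\textbf{Main obstacle.} The delicate point is the bookkeeping of $\alpha_k^2\|v_k\|^2$, i.e.\ getting the constants exactly as in $\kappa_3$ with no leftover factor of $2$. I would first use $\alpha_k^2\le(\nu+\theta\beta_k)^2$ and then $2ab\le a^2+b^2$. If that leaves a stray factor of $2$ on the $\nu^2$ term, the fallback is to use $\frac{\Gamma}{2}$ together with $\alpha_k^2\le 2\nu^2+2\theta^2\beta_k^2$, which makes the $\nu^2$ coefficient exactly $\Gamma\nu^2$ as required.
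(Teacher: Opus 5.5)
Your proposal is correct and matches the paper's proof essentially line by line. The paper uses the same Lipschitz bound with $J_kd_k=J_kv_k$ and orthogonality, the same split $\alpha_k^2\le 2\nu^2+2\theta^2\beta_k^2$ on the $v$-term (giving coefficients exactly $\Gamma\nu^2$ and $\Gamma\theta^2\beta_k^2$), the same use of $\alpha_k\le 1$ with \cref{lem: upper bound of norm u_k} and \cref{lemma2: u_k norm} on the $u$-term, and the same absorption via \eqref{eq:nuvanishingtau}; your "fallback" bookkeeping is exactly the paper's, so no stray factor of $2$ appears, and your explicit handling of $\|c_k\|=0$ is a small clean addition the paper leaves implicit.
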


\begin{proof}
    By Lipschitz continuity of the Jacobian of $c$ and $\alpha_k \leq 1$,
    \begin{align*}
        \|c_k\| - \|c(x_k+ \alpha_k d_k)\| 
        & \ge \alpha _k (\|c_k\| - \|c_k + J_k v_k\|) - \frac{\Gamma}{2} \alpha_k^2 \|d_k\|^2  \\ 
        & \ge \nu \kappa_v \kappa_c^{-1} \|J_k^T c_k\|^2 - \frac{\Gamma}{2} \alpha_k^2 (\|v_k\|^2 + \beta_k^2 \|u_k\|^2)  \\ 
        & \ge \nu \kappa_v \kappa_c^{-1} \|J_k^T c_k\|^2 - \Gamma (\nu^2 + \theta^2 \beta^2) \|v_k\|^2 - \frac{\Gamma\beta_k^2}{2} \|u_k\|^2  \\ 
        & \ge \nu \kappa_v \kappa_c^{-1} \|J_k^T c_k\|^2 - \Gamma \nu^2 \omega^2 \|J_k^T c_k\|^2 - \beta_k^2 \Gamma \theta^2   \kappa_J^2 \kappa_c^2 \omega^2  - \frac{\Gamma\beta_k^2}{2} \|u_k\|^2  \\ 
        & \ge \frac12 \nu \kappa_v \kappa_c^{-1} \|J_k^T c_k\|^2 - \beta_k^2 \Gamma \theta^2   \kappa_J^2 \kappa_c^2 \omega^2  - \frac{\Gamma\beta_k^2}{2} \|u_k\|^2
    \end{align*}
    Taking the conditional expectation on both sides and applying Lemmas \ref{lemma2: u_k norm} and \ref{lem: upper bound of norm u_k} yields the result.
    \end{proof}

Now we present our main result for this section.

\begin{theorem}\label{thm: taum not exist new}
    Let Assumptions \ref{assumption1}, \ref{ass: assumption on Hk} and \ref{ass2} hold for all $k\in \bN$. Set $\beta_k = \eta/\sqrt{K}$, $\ga_k$ as in \cref{alpha choice: interval}, and $\nu$ as in \eqref{eq:nuvanishingtau}. Then for any $K\in \bN$,
    \begin{equation} \label{eq: J_kc_k no taumin}
        \frac{1}{K} \sum_{k=0}^{K-1} \bE[\|J_k^Tc_k\|^2] \le \frac{2 \kappa_c \left(\|c_0\| + \eta^2 \kappa_3\right)}{\nu \kappa_v K}.
    \end{equation}
\end{theorem}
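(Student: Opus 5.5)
The plan is a telescoping argument built on \cref{lemma:c_k-c_k+1new}, using $\|c_k\|$ itself as the Lyapunov function. Because the merit parameter plays no role here, no merit function with $\tau$ is needed, and $f$ never enters. Fix $K$ and set $\beta_k=\eta/\sqrt K$. Since $\nu$ is chosen as in \eqref{eq:nuvanishingtau}, we have $\nu\le 1-\theta\kappa_\beta$, so $\alpha_k\le 1$ and the hypotheses of \cref{lemma:c_k-c_k+1new} are in force for every $k\in\{0,\dots,K-1\}$. Since $x_{k+1}=x_k+\alpha_k d_k$, that lemma reads
\begin{equation*}
\bE_k[\|c_k\|-\|c_{k+1}\|]\ge \tfrac12\nu\kappa_v\kappa_c^{-1}\|J_k^Tc_k\|^2-\kappa_3\beta_k^2,
\end{equation*}
where $\|J_k^Tc_k\|^2$ is measurable with respect to the history up to $x_k$.

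Next we take total expectations via the tower property and sum over $k=0,\dots,K-1$. The left-hand side telescopes to $\|c_0\|-\bE[\|c_K\|]\le\|c_0\|$, using only $\|c_K\|\ge 0$. This gives
\begin{equation*}
\tfrac12\nu\kappa_v\kappa_c^{-1}\sum_{k=0}^{K-1}\bE[\|J_k^Tc_k\|^2]\le \|c_0\|+\kappa_3\sum_{k=0}^{K-1}\beta_k^2=\|c_0\|+\kappa_3\eta^2 .
\end{equation*}
Multiplying by $2\kappa_c/(\nu\kappa_v K)$ yields \eqref{eq: J_kc_k no taumin}.

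The only delicate point is inherited from \cref{lemma:c_k-c_k+1new} and concerns the handling of $\alpha_k$. That lemma uses $\alpha_k\ge\nu$ in the first-order term and $\alpha_k^2\le 2(\nu^2+\theta^2\beta_k^2)$ in the second-order term. It also absorbs $\Gamma\nu^2\omega^2\|J_k^Tc_k\|^2$ into half of the decrease, and this is exactly where the bound $\nu\le\kappa_v\kappa_c^{-1}/(2\Gamma\omega^2)$ is needed. The $\beta_k^2\|u_k\|^2$ term is controlled in conditional expectation by \cref{lem: upper bound of norm u_k} together with $(\uktrue)^TH_k\uktrue\le\kappa_H\kappa_u^2$ from \cref{lemma2: u_k norm}.

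Given the lemma, the theorem itself is routine. The one thing to confirm is that no expectation is needed for $\alpha_k$ correlated with $g_k$. This holds because the lemma is proved pathwise before conditioning, using only deterministic bounds on $\alpha_k$.
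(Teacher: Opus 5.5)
Your proposal is correct and follows the paper's proof. You sum the per-iteration bound of Lemma~\ref{lemma:c_k-c_k+1new}, take total expectations so that the left side telescopes to $\|c_0\| - \bE[\|c_K\|] \le \|c_0\|$, use $\sum_{k=0}^{K-1}\beta_k^2 = \eta^2$, and divide by $K$; your remarks on where $\alpha_k \le 1$ and the choice \eqref{eq:nuvanishingtau} enter that lemma agree with how the paper uses them.
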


\begin{proof}
    Summing \eqref{eq: tauk vanish summing new} for $k=0,\dots,K-1$ and rearranging, we have
    \begin{equation*}
        \quad \ \frac12 \nu \kappa_v \kappa_c^{-1} \sum_{k=0}^{K-1} \bE_k[\|J_k^Tc_k\|^2]
        \le \sum_{k=0}^{K-1} \bE_k[\|c_k\| - \|c (x_{k+1})\|] + \kappa_3 \beta_k^2.
    \end{equation*}
    Taking the total expectation and applying $\beta_k = \eta/\sqrt{K}$, we have
    \begin{equation*}
        \frac12 \nu \kappa_v \kappa_c^{-1} \sum_{k=0}^{K-1} \bE[\|J_k^Tc_k\|^2] \le \|c_0\| + \eta^2 \kappa_3.
    \end{equation*}
    Rearranging terms and dividing through by $K$ yields the result.
\end{proof}

To our knowledge, this represents the first complexity result for convergence to an infeasible stationary point, and matches the $\mathcal{O}(\epsilon_c^{-2})$ complexity result obtained in \Cref{sec: when taum and no licq}. While convergence to an infeasible stationary point is obviously undesired, it is the best possible convergence behavior one can hope for under such mild assumptions, even when $f$ is deterministic.

\section{The Inexact Version of Stochastic SQP Algorithm}\label{sec: inexact algs}

In our previous discussion, we assumed that an \emph{exact} solution of the subproblem \cref{eq: subproblem u_k} is obtained so that condition $u_k \in \text{Null}(J_k)$, \ie \  $J_ku_k = 0$ is strictly satisfied. However, such an exact $u_k$ is expensive to compute in practice, especially when the problem is large-scale. More importantly, since $u_k$ \mo{arises from} the stochastic subproblem \cref{eq: subproblem u_k} involving $g_k$, it is natural that we need not expend too much computation solving for $u_k$, which is fundamentally corrupted by stochastic noise. Therefore, we turn to \emph{inexact} solvers for the subproblem \cref{eq: subproblem u_k}. We introduce termination tests, which are used as a stopping criterion for iterative linear system solvers such as MINRES and enable our analysis to track the residuals of the linear system. We now introduce and (re)define some variables for the analysis in this section. 

\mo{In this section, we assume $u_k$ and $y_k$ are computed} using an inexact iterative linear system solver. \mo{This solver also generates a pair of residuals} $(\rho_k, r_k)\in \bR^n \times\bR^m$ such that, for every iteration $k\in \bN$, $(\rho_k, r_k)$ satisfy our termination tests. \mo{We denote the exact solution of the subproblem \cref{eq: subproblem u_k} by $u_{k,*}$}, which lies in the Null space of $J_k$ (\ie $u_{k,*}$ is the $u_k$ used in previous sections). Recalling the general KKT system \cref{eq: subproblem u_k's KKT system} of the tangential subproblem, we define the residual pair $(\rho_k, r_k)$ formally as follows:
\begin{equation}\label{eq: define rho_k and r_k}
    \begin{bmatrix}
        \rho_k \\ r_k
    \end{bmatrix} :=\begin{bmatrix}
        H_k & J_k^T \\J_k & 0 
    \end{bmatrix}\begin{bmatrix}
        u_k \\y_k
    \end{bmatrix} + \begin{bmatrix}
        g_k + H_kv_k \\0
    \end{bmatrix}.
\end{equation}
We make the following assumptions on the iterative solver we use. Note that Assumption \ref{ass: iterative solver} is a common property of linear system solvers. In fact, Krylov Subspace solvers, such as MINRES, can produce an exact solution \ie \ $(\rho_k, r_k) = (0,0)$ after \mo{$t=n+m$} iterations.
\begin{ass}\label{ass: iterative solver}
    For all $k\in\bN$, the iterative system solver used to compute $u_k$ and $y_k$ generates a sequence $\{(u_{k,t}, \rho_{k, t}, r_{k, t})\}_{t\ge 0}$ with 
    \begin{equation}
    \begin{bmatrix}
        \rho_{k, t} \\ r_{k, t}
    \end{bmatrix} :=\begin{bmatrix}
        H_k & J_k^T \\J_k & 0 
    \end{bmatrix}\begin{bmatrix}
        u_{k, t} \\y_{k, t}
    \end{bmatrix} + \begin{bmatrix}
        g_k + H_kv_k \\0
    \end{bmatrix},
\end{equation}
    such that $\lim_{t\to \infty} \|(\ukt, \rho_{k,t}, r_{k, t})-(u_{k,*},  0, 0)\| =0$.
\end{ass}

Note that, since the $y_k$ part of the solution to \eqref{eq: subproblem u_k} is not unique when $J_k$ is rank deficient and we do not need $y_k$ in our analysis, we do not include it in the assumption. Formally, we ask for the following termination tests of the iterative linear system solver.
\begin{ttt}\label{tt: termination test on residual norms}
    Let $(\gamma_r, \gamma_\rho) \in \bR_{>0} \times \bR_{>0}$ and $\beta_k \in \bR_{>0}$ be a predefined rescaling parameter. For all $k\in\bN$, we terminate the iterative linear solver as long as the residuals satisfy the following conditions:
    $$
    \|r_k\| \le \gamma_r\beta_k \quad \text{and} \quad \|\rho_k\| \le \gamma_\rho\beta_k.
    $$
\end{ttt}
Under Assumption \ref{ass: iterative solver}, the termination test is \mo{always satisfied for all sufficiently large $t$.}

Since our inexact $u_k$ \mo{is no longer contained in the} Null space of $J_k$ now, we \mo{first prove a few} parallel lemmas to \cref{lem: upper bound of norm u_k} to continue our convergence analysis.


We now derive the explicit form of our inexact $u_k$. With $J_k^+$, we can write $u_k$ as the decomposition $u_k = u_{k,1} + u_{k,2}$, where $u_{k,1} \in \text{Range}(J_k)$ and $u_{k,2}\in \text{Null} (J_k)$.

From \cref{eq: define rho_k and r_k}, we have 
\begin{equation}\label{eq: uk1 and uk2}
    \begin{gathered}
        u_{k,1} = J_k^+ r_k, \quad 
        u_{k,2} = -Z_k(Z_k^TH_kZ_k)^{-1}Z_k^T (g_k+H_kv_k + H_kJ_k^+r_k - \rho_k).
    \end{gathered}
\end{equation}
Thus the \emph{inexact} solution $u_k$ of \eqref{eq: define rho_k and r_k} is 
\begin{equation}\label{eq: inexact u_k explicit form}
    u_k = J_k^+ r_k-Z_k(Z_k^TH_kZ_k)^{-1}Z_k^T (g_k+H_kv_k + H_kJ_k^+r_k - \rho_k).
\end{equation}
Also, recall that the tangential component computed \mo{exactly} with the true gradient is
$$
\uktrue = -Z_k(Z_k^TH_kZ_k)^{-1}Z_k^T(\nabla f(x_k)+ H_kv_k).
$$
To derive a uniform bound on $\|u_k\|^2$, we make the following assumption on the behavior of constraint Jacobian matrices $\{J_k\}$.
\begin{ass}\label{ass: uniform lower singular value}
    For all $k\in\bN$, the nonzero singular values of $J_k$ is uniformly lower bounded by some positive value $\bar\sigma_{\min} \in \bR_{>0}$.
\end{ass}
This assumption covers the LICQ case, where all the singular values of the constraint Jacobians are positive and bounded from below. To clarify, since the assumption does not require all singular values to be positive, it includes certain ``ideal" cases of rank-deficient Jacobians. \mo{We note that this requirement is necessary to provide any meaningful bound on $\|u_{k,1}\|$ and thus is fundamental to our analysis with an inexact $u_k$.} In particular, Assumption \ref{ass: uniform lower singular value} yields a uniform upper bound that is $\|J_k^+\|\le 1/\bar\sigma_{\min}$ for all $k\in\bN$.
\begin{lem}\label{lem: bound on norm of expecation u_k-uktre}
    \mo{Let Assumptions \ref{ass: iterative solver} and \ref{ass: uniform lower singular value} hold.} Then, for all $k\in\bN, \|\bE_k[u_k - \uktrue]\| \le \kappa_4 \beta_k$, where $\kappa_4 := (\bar\sigma_{\min}^{-1}\gamma_r + \zeta^{-1} \gamma_\rho)$.
\end{lem}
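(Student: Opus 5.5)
Using the explicit representation \eqref{eq: inexact u_k explicit form} of the inexact tangential component together with the formula $\uktrue = -Z_k(Z_k^TH_kZ_k)^{-1}Z_k^T(\nabla f(x_k)+H_kv_k)$, I will take the difference directly:
\begin{equation*}
u_k - \uktrue = J_k^+ r_k - Z_k(Z_k^TH_kZ_k)^{-1}Z_k^T\big(g_k - \nabla f(x_k) + H_kJ_k^+ r_k - \rho_k\big).
\end{equation*}
The $H_kv_k$ terms cancel because $v_k$ is independent of $g_k$. Taking the conditional expectation $\bE_k$, the term involving $g_k-\nabla f(x_k)$ vanishes by Assumption~\ref{ass2} and linearity, since $Z_k$, $H_k$ and $J_k$ are determined by $x_k$ and are independent of $g_k$ (Assumption~\ref{ass: assumption on Hk}). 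This leaves
\begin{equation*}
\bE_k[u_k - \uktrue] = \bE_k\big[J_k^+ r_k\big] - Z_k(Z_k^TH_kZ_k)^{-1}Z_k^T\,\bE_k\big[H_kJ_k^+ r_k - \rho_k\big].
\end{equation*}

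Next, I will take norms and use Jensen's inequality, $\|\bE_k[\cdot]\|\le \bE_k\|\cdot\|$. The following bounds are available:
\begin{itemize}
\item $\|J_k^+\|\le \bar\sigma_{\min}^{-1}$ by Assumption~\ref{ass: uniform lower singular value};
\item $\|Z_k(Z_k^TH_kZ_k)^{-1}Z_k^T\|\le \zeta^{-1}$ by Assumption~\ref{ass: assumption on Hk}, using that $Z_k$ has orthonormal columns;
\item $\|r_k\|\le\gamma_r\beta_k$ and $\|\rho_k\|\le \gamma_\rho\beta_k$, holding almost surely by Termination Test~\ref{tt: termination test on residual norms}, with $\beta_k$ deterministic.
\end{itemize}
These give $\|\bE_k[J_k^+r_k]\|\le \bar\sigma_{\min}^{-1}\gamma_r\beta_k$ and a $\rho_k$ contribution of at most $\zeta^{-1}\gamma_\rho\beta_k$. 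Assumption~\ref{ass: iterative solver} is used only to guarantee that the termination test is eventually met, so that $u_k$ is well defined.

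The main obstacle is the cross term $Z_k(Z_k^TH_kZ_k)^{-1}Z_k^TH_kJ_k^+r_k$. A naive bound gives an extra $\zeta^{-1}\kappa_H\bar\sigma_{\min}^{-1}\gamma_r\beta_k$, which does not appear in the stated $\kappa_4$. I would first try to absorb it by combining it with $J_k^+r_k$. Writing $P := I - Z_k(Z_k^TH_kZ_k)^{-1}Z_k^TH_k$, the two terms equal $PJ_k^+r_k$. The operator $P$ is an oblique projector onto $\mathrm{Range}(J_k^T)$ along $\mathrm{Null}(J_k)$, where the null-space direction is taken in the $H_k$-weighted sense. Since $J_k^+r_k$ already lies in $\mathrm{Range}(J_k^T)$, one might hope for $\|PJ_k^+r_k\|\le\|J_k^+r_k\|$. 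That inequality need not hold for a general oblique projector, however.

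If this absorption fails, I would fall back to a cruder bound. One option is to enlarge the constant to $\bar\sigma_{\min}^{-1}\gamma_r(1+\zeta^{-1}\kappa_H)+\zeta^{-1}\gamma_\rho$. The other is to argue that the cross term is dominated, noting that only the order $\mathcal{O}(\beta_k)$ matters downstream. In either case the conclusion $\|\bE_k[u_k-\uktrue]\|=\mathcal{O}(\beta_k)$ follows with a constant of the form of $\kappa_4$.
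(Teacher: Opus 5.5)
Your reduction is the same as the paper's. You use the same explicit form \eqref{eq: inexact u_k explicit form} and remove the $g_k-\nabla f(x_k)$ term by unbiasedness, as the paper does. You get the same $\zeta^{-1}\gamma_\rho\beta_k$ bound on the $\rho_k$ term. You also regroup the cross term the same way, as $PJ_k^+r_k$ with $P := I - Z_k(Z_k^TH_kZ_k)^{-1}Z_k^TH_k$.

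The paper then closes the argument at exactly the step you flagged, by asserting $\|P\|\le 1$ (attributed to \cite[Lemma 11]{berahas2024stochastic}). That assertion is the only reason the cross term is missing from $\kappa_4$. Your suspicion is correct. $P$ is idempotent, so $\|P\|\ge 1$ whenever $P\ne 0$, with equality only if $P$ is symmetric. Here that means $H_k$ must map $\text{Null}(J_k)$ into itself.

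Your description of $P$ is also slightly off. $P$ projects along $\text{Null}(J_k)$ onto $\{x: Z_k^TH_kx=0\}$, not onto $\text{Range}(J_k^T)$. The projector onto $\text{Range}(J_k^T)$ along $\text{Null}(J_k)$ is $J_k^+J_k$, which would make the step trivial.

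On the vectors that matter, the hoped-for inequality actually runs the wrong way. The vector $w:=J_k^+r_k\in\text{Range}(J_k^T)$ is orthogonal to $\text{Range}(Z_k)$, so
\begin{equation*}
\|Pw\|^2=\|w\|^2+\|(Z_k^TH_kZ_k)^{-1}Z_k^TH_kw\|^2\ge\|w\|^2,
\end{equation*}
strictly unless $Z_k^TH_kw=0$.

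Consequently the lemma with the stated $\kappa_4$ is false in general, and no argument can close your gap. Here is a counterexample.
\begin{itemize}
\item Take $n=2$, $m=1$, $J_k=[1\ \ 0]$ and $H_k=\begin{bmatrix}1&b\\ b&1\end{bmatrix}$. Then $Z_k=e_2$, the values $\zeta=\bar\sigma_{\min}=1$ are admissible, and $\kappa_4=\gamma_r+\gamma_\rho$.
\item Use a solver whose first iterate solves \eqref{eq: subproblem u_k's KKT system} up to the deterministic residual pair $(\rho_k,r_k)=(0,\gamma_r\beta_k)$, and whose later iterates are exact. It satisfies Assumption~\ref{ass: iterative solver} and stops immediately under Termination Test~\ref{tt: termination test on residual norms}.
\item Then $\bE_k[u_k-\uktrue]=PJ_k^+r_k=\gamma_r\beta_k(1,-b)^T$. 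Its norm $\gamma_r\beta_k\sqrt{1+b^2}$ exceeds $\kappa_4\beta_k$ as soon as $\gamma_\rho<\gamma_r(\sqrt{1+b^2}-1)$.
\end{itemize}

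Your first fallback is the right repair. The identity above even sharpens it to
\begin{equation*}
\|\bE_k[u_k-\uktrue]\|\le\big(\bar\sigma_{\min}^{-1}\gamma_r\sqrt{1+\zeta^{-2}\kappa_H^2}+\zeta^{-1}\gamma_\rho\big)\beta_k.
\end{equation*}
State the corrected constant explicitly rather than claiming one ``of the form of $\kappa_4$''. Drop the second option, which is not an argument.

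Later results use $\kappa_4$ (in $\tilde\kappa_u$, \eqref{eq:inexactgdk} and $\tilde\kappa_1$) only as a constant independent of $\beta_k$. So the complexity orders in Section~\ref{sec: inexact algs} survive this correction.
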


\begin{proof}
    \at{From \cref{eq: inexact u_k explicit form} we know}
    \begin{align*}
        \bE_k [u_k - \uktrue] &= \bE_k [J_k^+r_k - Z_k(Z_k^TH_kZ_k)^{-1} Z_k^T(g_k - \nabla f(x_k)) \\ 
        &\qquad - Z_k(Z_k^TH_kZ_k)^{-1} Z_k^TH_kJ_k^+r_k + Z_k(Z_k^TH_kZ_k)^{-1} Z_k^T \rho_k] \\ 
        &= \bE_k [(I- Z_k(Z_k^TH_kZ_k)^{-1}Z_k^TH_k) J_k^+ r_k] + \bE_k[Z_k(Z_k^TH_kZ_k)^{-1}Z_k^T\rho_k].
    \end{align*}
    From \at{\cite[Lemma 11]{berahas2024stochastic}}, we know $\|I- Z_k(Z_k^TH_kZ_k)^{-1}Z_k^TH_k\|\le 1$ and $\|Z_k(Z_k^TH_kZ_k)^{-1}Z_k^T\|\le \zeta^{-1}$. 
    Taking the norm of both sides and using Cauchy-Schwarz and the triangle inequality, we have 
    \begin{align*}
        \|\bE_k(u_k - \uktrue)\| & \le \|\bE_k [(I- Z_k(Z_k^TH_kZ_k)^{-1}Z_k^TH_k) J_k^+ r_k]\| \\ 
        & \qquad + \|\bE_k[Z_k(Z_k^TH_kZ_k)^{-1}Z_k^T\rho_k]\| \\
        & \le \frac{1}{\bar \sigma_{\min}}\gamma_r \beta_k + \zeta^{-1}\gamma_\rho \beta_k,
    \end{align*}
    which proves the result.
\end{proof}

\begin{lem}\label{lem: upper bound on inexact u_k norm}
    \mo{Let Assumptions \ref{assumption1}, \ref{ass: assumption on Hk}, \ref{ass2}, \ref{ass: iterative solver}, and \ref{ass: uniform lower singular value} hold.} For all $k\in\bN$, $\bE_k\|u_k\|^2$ is bounded from above by $\tilde\kappa_u:= 2\kappa_u^2 + 2 (\zeta^{-1}\sqrt{M} + \kappa_4 \mo{\kappa_\beta})^2$.
\end{lem}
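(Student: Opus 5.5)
The plan is to split $u_k$ around its true counterpart and bound each piece separately, which is where the factor $2$ and the two summands of $\tilde\kappa_u$ come from. Write $u_k = \uktrue + (u_k - \uktrue)$ and apply $\|a+b\|^2 \le 2\|a\|^2 + 2\|b\|^2$, so that
\[
\bE_k\|u_k\|^2 \le 2\|\uktrue\|^2 + 2\,\bE_k\|u_k-\uktrue\|^2 .
\]
The first term is deterministic given $x_k$. By \cref{lemma2: u_k norm} it is at most $2\kappa_u^2$.

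For the second term, I would use the explicit form \cref{eq: inexact u_k explicit form}, exactly as in the proof of \cref{lem: bound on norm of expecation u_k-uktre}. This gives
\[
u_k - \uktrue = -Z_k(Z_k^TH_kZ_k)^{-1}Z_k^T(g_k-\nabla f(x_k)) + \big(I - Z_k(Z_k^TH_kZ_k)^{-1}Z_k^TH_k\big)J_k^+r_k + Z_k(Z_k^TH_kZ_k)^{-1}Z_k^T\rho_k .
\]
The first piece is the stochastic error. The remaining two pieces are the inexactness error, which I would call $e_k$.

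For the stochastic piece, the bound $\|Z_k(Z_k^TH_kZ_k)^{-1}Z_k^T\|\le\zeta^{-1}$ from \cite[Lemma 11]{berahas2024stochastic} together with Assumption \ref{ass2} gives a second moment at most $\zeta^{-2}M$. For $e_k$, Termination Test \ref{tt: termination test on residual norms}, Assumption \ref{ass: uniform lower singular value}, and the norm bounds on the two projector-type operators give the deterministic bound $\|e_k\| \le (\bar\sigma_{\min}^{-1}\gamma_r + \zeta^{-1}\gamma_\rho)\beta_k = \kappa_4\beta_k \le \kappa_4\kappa_\beta$. This holds almost surely, not just in expectation.

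To combine the two pieces into the square $(\zeta^{-1}\sqrt M + \kappa_4\kappa_\beta)^2$, I would expand
\[
\bE_k\|s_k + e_k\|^2 = \bE_k\|s_k\|^2 + 2\,\bE_k[s_k^Te_k] + \bE_k\|e_k\|^2,
\]
where $s_k$ is the stochastic piece. The cross term is bounded by Cauchy--Schwarz: since $\|e_k\|\le\kappa_4\kappa_\beta$ surely, $\bE_k[s_k^Te_k] \le \kappa_4\kappa_\beta\,\bE_k\|s_k\| \le \kappa_4\kappa_\beta\,\zeta^{-1}\sqrt M$, using Jensen's inequality as in \cref{lemma: reduced KKT}. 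Summing the three terms gives exactly $(\zeta^{-1}\sqrt M + \kappa_4\kappa_\beta)^2$. Equivalently, one can apply Minkowski's inequality in $L^2$: $\sqrt{\bE_k\|s_k+e_k\|^2} \le \sqrt{\bE_k\|s_k\|^2} + \sup\|e_k\|$.

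The main obstacle is that $r_k$ and $\rho_k$ depend on $g_k$, so $e_k$ is correlated with $s_k$. The cross term therefore cannot be dropped by unbiasedness, and the argument has to rely on the almost-sure bound from the termination test, as above. I would also check that $\uktrue$ is the correct centering, given that $u_k$ now has a range-space component $J_k^+r_k$. This component is accounted for inside $e_k$ through the projector $I - Z_k(Z_k^TH_kZ_k)^{-1}Z_k^TH_k$, whose norm is at most $1$ by \cite[Lemma 11]{berahas2024stochastic}.
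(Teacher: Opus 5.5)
Your proposal is correct. It uses the same outer split as the paper, $\bE_k\|u_k\|^2 \le 2\|\uktrue\|^2 + 2\bE_k\|u_k-\uktrue\|^2$ with $\|\uktrue\|\le\kappa_u$, but it controls the second term differently, and more soundly.

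The paper quotes only a \emph{first-moment} bound, $\bE_k[\|u_k-\uktrue\|]\le \zeta^{-1}\sqrt{M}+\kappa_4\beta_k$, and then squares it. That inference does not follow: Jensen only gives $(\bE_k\|u_k-\uktrue\|)^2\le \bE_k\|u_k-\uktrue\|^2$, which is the wrong direction. Your argument supplies the missing step:
\begin{itemize}
\item the noise piece $s_k$ satisfies $\bE_k\|s_k\|^2\le\zeta^{-2}M$;
\item the inexactness piece $e_k$ is bounded surely by $\kappa_4\beta_k$ through the termination test;
\item Minkowski in $L^2$, or your cross-term estimate, then gives $\bE_k\|u_k-\uktrue\|^2\le(\zeta^{-1}\sqrt M+\kappa_4\kappa_\beta)^2$.
\end{itemize}
You are also right not to assume $s_k$ and $e_k$ are uncorrelated, since $r_k$ and $\rho_k$ depend on $g_k$. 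So your route is the one that actually justifies the stated constant.

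One caveat applies both to your proof and to the paper's definition of $\kappa_4$. The quoted bound $\|I - Z_k(Z_k^TH_kZ_k)^{-1}Z_k^TH_k\|\le 1$ is false in general, because this matrix is an oblique projector. For example, take $J_k=(1\ \ 0)$, $Z_k=e_2$, and $H_k$ with entries $h_{11}=0$, $h_{12}=h_{22}=1$, so that $\zeta=1$ is admissible. The matrix then maps $e_1\in\mathrm{Range}(J_k^T)$ to $e_1-e_2$, which has norm $\sqrt2$.

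What is true is the following. For $x\in\mathrm{Range}(J_k^T)$, the vectors $x$ and $Z_k(Z_k^TH_kZ_k)^{-1}Z_k^TH_kx$ are orthogonal, so $\|(I - Z_k(Z_k^TH_kZ_k)^{-1}Z_k^TH_k)x\|\le\sqrt{1+\zeta^{-2}\kappa_H^2}\,\|x\|$. Since $J_k^+r_k\in\mathrm{Range}(J_k^T)$, you can replace $\bar\sigma_{\min}^{-1}\gamma_r$ by $\sqrt{1+\zeta^{-2}\kappa_H^2}\,\bar\sigma_{\min}^{-1}\gamma_r$ in $\kappa_4$. With that corrected constant, your argument goes through verbatim.
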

\begin{proof}
    From \cite[Lemma 11]{berahas2024stochastic}, we know that $\bE_k[\|u_k - \uktrue\|] \le \zeta^{-1}\sqrt{M} + \kappa_4 \beta_k$. \\ 
    Therefore, $\bE_k \|u_k\|^2 \le \bE_k [2(\|\uktrue\|^2 + \|u_k - \uktrue\|^2)] \le 2\kappa_u^2 + 2 (\zeta^{-1}\sqrt{M} + \kappa_4 \mo{\kappa_\beta})^2 = \tilde\kappa_u$.
\end{proof}

\mo{Now, we present our first main result of this section, when Assumption \ref{ass:taumin} holds as well.}

\begin{thm}\label{thm: inexact main theorem}
    Let Assumptions \ref{assumption1}, \ref{ass: assumption on Hk}, \ref{ass2}, \mo{\ref{ass:taumin}}, \ref{ass: iterative solver}, and \ref{ass: uniform lower singular value} hold and let $\beta_k = \eta/\sqrt{K}$ and $\ga_k\in[\nu, \nu+\theta\beta_k]$. Let
    \mo{
    \begin{equation*}
        \nu \in \left(0, \min\left\{ \frac{\sigma\kappa_v\kappa_c^{-1}}{4(\taum L+\Gamma)\omega^2}, 1-\theta \kappa_\beta\right\}\right),
    \end{equation*}}
    and define 
    \begin{equation*}
        \tilde\kappa_1 := \taum\kappa_g\kappa_4 + (\taum L+\Gamma)\tilde \kappa_u + \mo{2}\theta^2(\taum L+\Gamma)\go^2\kappa_J^2\kappa_c^2 + \gamma_r,
    \end{equation*}
    \begin{equation*} \text{and} \quad
        \tilde \kappa_2 := \taum(\mo{f_0}-f_{\inf})+\|c_0\|+\eta^2\tilde\kappa_1.
    \end{equation*}
    Then, we have
    \begin{equation}
        \sum_{k=0}^{K-1} \bE[\nu\beta_k\taum (\uktrue)^TH_k\uktrue + \frac{1}{2}\nu\sigma (\|c_k\|-\|c_k+J_kv_k\|)] \le \tilde\kappa_2.
    \end{equation}
\end{thm}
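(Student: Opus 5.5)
We follow the proof of \cref{thm: taum exists convergence}, adapted to the inexact step $u_k$, which no longer lies in $\text{Null}(J_k)$. The starting point is again the descent inequality \eqref{generic descent of phi} from the proof of \cref{lemma: generic descent}:
\[
\phi(x_{k+1},\taum)-\phi(x_k,\taum)\le \alpha_k\taum\nabla f(x_k)^Td_k-\alpha_k(\|c_k\|-\|c_k+J_kd_k\|)+\tfrac12(\taum L+\Gamma)\alpha_k^2\|d_k\|^2 .
\]
This step only uses $\alpha_k\le1$ and Lipschitz continuity. Since $J_kd_k=J_kv_k+\beta_k r_k$, the triangle inequality together with Termination Test \ref{tt: termination test on residual norms} gives
\[
\|c_k+J_kd_k\|\le\|c_k+J_kv_k\|+\gamma_r\beta_k^2 .
\]
Because $\alpha_k\le1$, this contributes at most $\gamma_r\beta_k^2$, which accounts for the $\gamma_r$ term in $\tilde\kappa_1$. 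For the quadratic term, orthogonality of $u_k$ and $v_k$ is lost, so we use $\|d_k\|^2\le 2\beta_k^2\|u_k\|^2+2\|v_k\|^2$. The factor $2$ here explains the halved $\nu$ bound (the $4$ instead of $2$ in the denominator) and the coefficient $2\theta^2$ in $\tilde\kappa_1$.

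For the gradient term, we write $\nabla f(x_k)^Td_k=\nabla f(x_k)^T\dktrue+\beta_k\nabla f(x_k)^T(u_k-\uktrue)$. We take conditional expectations and split $\alpha_k=\nu+\theta\xi_k\beta_k$ as in \cref{lem: 3.9}.
\begin{itemize}
\item The deterministic part $\nu$ multiplies $\beta_k\nabla f(x_k)^T\bE_k[u_k-\uktrue]$. By \cref{lem: bound on norm of expecation u_k-uktre} this is at most $\nu\beta_k^2\kappa_g\kappa_4\le\beta_k^2\kappa_g\kappa_4$, which is no longer zero but is still $O(\beta_k^2)$.
\item The random part is $\theta\beta_k^2\taum\kappa_g\,\bE_k\|u_k-\uktrue\|$, bounded via the estimate in the proof of \cref{lem: upper bound on inexact u_k norm}. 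It can also be absorbed with the $\tilde\kappa_u$ term.
\end{itemize}
The term $\alpha_k^2\beta_k^2(\taum L+\Gamma)\bE_k\|u_k\|^2$ is bounded by $\beta_k^2(\taum L+\Gamma)\tilde\kappa_u$ using \cref{lem: upper bound on inexact u_k norm}. The $v_k$ term is handled exactly as before:
\[
\alpha_k^2\le 2\nu^2+2\theta^2\beta_k^2,\qquad \|v_k\|^2\le\omega^2\|J_k^Tc_k\|^2\le \omega^2\kappa_J^2\kappa_c^2 .
\]
The $\theta^2$ piece goes into $\tilde\kappa_1$. For the $\nu^2$ piece we use \cref{lemma: constraint model reduction} in the form $\|v_k\|^2\le\omega^2\kappa_c\kappa_v^{-1}(\|c_k\|-\|c_k+J_kv_k\|)$.

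Collecting terms, the remaining deterministic part is $-\alpha_k\Delta l(x_k,\taum,\nabla f(x_k),\dktrue)$, which \eqref{eq: model reduction and ukHkuk} bounds by $-\alpha_k\taum\beta_k(\uktrue)^TH_k\uktrue-\alpha_k\sigma(\|c_k\|-\|c_k+J_kv_k\|)$. The new choice of $\nu$ ensures
\[
2\nu^2(\taum L+\Gamma)\omega^2\kappa_c\kappa_v^{-1}\le\tfrac12\nu\sigma ,
\]
so at least $\tfrac12\alpha_k\sigma(\|c_k\|-\|c_k+J_kv_k\|)$ remains negative. We then use $\alpha_k\ge\nu$ on the negative terms, which are nonnegative quantities by \eqref{eq: model reduction and ukHkuk}. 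Finally we take total expectations, telescope from $0$ to $K-1$, use $\phi(x_K,\taum)\ge\taum f_{\inf}$ and $\sum\beta_k^2=\eta^2$, and bound $\phi(x_0,\taum)-\taum f_{\inf}$ by $\taum(f_0-f_{\inf})+\|c_0\|$. This yields $\tilde\kappa_2$.

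The main obstacle is the cross term $\bE_k[\alpha_k\taum\nabla f(x_k)^T(d_k-\dktrue)]$. Unlike in \cref{lemma: reduced KKT}, $u_k$ is biased, and $\alpha_k$ may be correlated with $g_k$. This is handled only because the termination test forces the bias to be $O(\beta_k)$, and $\alpha_k$ deviates from $\nu$ by $O(\beta_k)$, so every piece is $O(\beta_k^2)$. Some care is needed to check that the constants fit inside $\tilde\kappa_1$ as stated; for example, the $\theta\kappa_g\zeta^{-1}\sqrt M$ piece must be absorbed using $\tilde\kappa_u\ge 2(\zeta^{-1}\sqrt M)^2$ or a similar bound. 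If that does not fit, one could enlarge the constant slightly.
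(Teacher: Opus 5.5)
Your outline matches the paper's proof. It uses the same perturbed descent inequality, with $J_kd_k=J_kv_k+\beta_k r_k$ and the termination test producing the $\gamma_r\beta_k^2$ term. It uses the same bound $\|d_k\|^2\le 2(\beta_k^2\|u_k\|^2+\|v_k\|^2)$, which explains the $4$ in the range for $\nu$ and the coefficient $2\theta^2$. It also relies on \cref{lem: upper bound on inexact u_k norm} and \cref{lemma: constraint model reduction} in the same way, and telescopes the same way.

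The one place you depart from the paper is the cross term. Your suspicion there is justified, but your proposed fix does not work.

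The paper simply writes $\bE_k[\alpha_k\taum\beta_k\nabla f(x_k)^T(u_k-\uktrue)]\le\beta_k^2\taum\kappa_g\kappa_4$. This effectively pulls $\alpha_k$ out of $\bE_k$ and applies \cref{lem: bound on norm of expecation u_k-uktre} together with $\alpha_k\le 1$. It therefore implicitly treats $\alpha_k$ as fixed once $x_k$ is known, i.e., chosen independently of $g_k$. Under that convention no $\xi_k$-splitting is needed, and the stated $\tilde\kappa_1$ follows.

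If instead $\alpha_k$ may depend on $g_k$, as \cref{lem: 3.9} allows, the extra $\theta\taum\kappa_g\zeta^{-1}\sqrt{M}\,\beta_k^2$ is genuine. It cannot be hidden in $(\taum L+\Gamma)\tilde\kappa_u$. That term is consumed in full by $\bE_k[\alpha_k^2\beta_k^2(\taum L+\Gamma)\|u_k\|^2]$, and $\theta$ is a free parameter, so nothing among the listed constants dominates the extra term.

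The part of your random piece coming from the $\kappa_4\beta_k$ term in $\bE_k\|u_k-\uktrue\|\le\zeta^{-1}\sqrt{M}+\kappa_4\beta_k$ is harmless. It contributes at most $\theta\beta_k\cdot\taum\kappa_g\kappa_4\beta_k^2$, and combined with the $\nu$ piece this gives $(\nu+\theta\beta_k)\taum\kappa_g\kappa_4\beta_k^2\le\taum\kappa_g\kappa_4\beta_k^2$. You also dropped the factor $\taum$ in the $\nu$ piece.

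So in the random-$\alpha_k$ reading, your argument establishes the bound only after adding $\theta\taum\kappa_g\zeta^{-1}\sqrt{M}$ to $\tilde\kappa_1$. This is exactly the term that $\kappa_1$ carries in \cref{thm: taum exists convergence}. The constant as stated rests on the paper's implicit assumption that $\alpha_k$ does not depend on $g_k$.
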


\begin{proof}
Similar to the proof of \cref{lemma: generic descent},
\begin{align*}
    & \quad \ \phi (x_k +\alpha_k d_k, \taum) - \phi (x_k ,\taum ) \\
    & \le -\ga_k \Delta l(x_k, \taum,\nabla f_k, \dktrue) + \ga_k \taum \nabla f_k^T(d_k - \dktrue) \\
    & \qquad- \ga_k (\|c_k + J_kv_k\| - \|c_k +J_kv_k+\beta_k r_k\|) + \frac{\mo{\alpha_k^2}}{2}(\taum L+\Gamma) \mo{\|d_k\|^2}.
\end{align*}
Using $\|d_k\|^2 \le 2(\gb_k^2\|u_k\|^2 + \|v_k\|^2)$ and rearranging the order of the inequality, we have 
\begin{align*}
    &\quad \ \ga_k \Delta l(x_k ,\taum \nabla f_k, \dktrue) \\ 
    & \le (\phi(x_k,\taum)- \phi(x_k+\ga_kd_k,\taum)) + \alpha_k\taum \nabla f_k^T(d_k - \dktrue) \\
    & \qquad+ \ga_k (\|c_k +J_kv_k+\beta_k r_k\| - \|c_k + J_kv_k\|) + (\taum L+\Gamma) \ga_k^2 (\beta_k^2\|u_k\|^2 + \|v_k\|^2).
\end{align*}
\mo{By the definition of $\nu$ and $\ga_k$}, taking the conditional expectation, by \eqref{eq: local model reduction},
\begin{align*}
    &\quad \ \nu\bE_k [\taum \beta_k (\uktrue)^T H_k\uktrue + \sigma (\|c_k\| - \|c_k+J_kv_k\|)] \\
    & \le \bE_k [(\phi(x_k,\taum) - \phi (x_k+\alpha_kd_k, \taum)] 
        +\bE_k [\alpha_k\taum\nabla f_k^T(d_k - \dktrue)] \\
    & \qquad +\alpha_k \beta_k \bE_k[\|r_k\|] + \bE_k [\alpha_k^2\beta_k^2(\taum L+\Gamma) \|u_k\|^2] \\
    & \qquad + \mo{2}\nu^2(\taum L+\Gamma)\omega^2 \|J_k^Tc_k\|^2 + \mo{2}\theta^2\gb_k^2(\taum L+\Gamma)\omega^2\kappa_J^2\kappa_c^2.
\end{align*}
\mo{Working with the second term on the right-hand side of this inequality, by Lemma \ref{lem: upper bound on inexact u_k norm},
\begin{equation} \label{eq:inexactgdk}
    \bE_k [\alpha_k\taum\nabla f_k^T(d_k - \dktrue)] = \bE_k [\alpha_k\taum \beta_k \nabla f_k^T(u_k - \uktrue)] \le \beta_k^2 \taum \kappa_g \kappa_4.
\end{equation}}
Since $\ga_k\le 1$ and $\|r_k\| \le \gamma_r\beta_k$, we know 
$$\alpha_k\beta_k \bE_k \|r_k\| \le \gamma_r\beta_k^2.$$
By the choice of $\nu$ and \cref{lemma: constraint model reduction}  we have
\begin{equation*}
    \mo{2}\nu^2(\taum L+\Gamma) \omega^2\|J_k^Tc_k\|^2 \le \frac{1}{2}\nu \sigma (\|c_k\|-\|c_k+J_kv_k\|).
\end{equation*}
Combining the results in Lemma \ref{lem: bound on norm of expecation u_k-uktre} and \ref{lem: upper bound on inexact u_k norm}, we can conclude that 
\begin{align*}
    & \quad \ \bE_k [\nu\taum \beta_k (\uktrue)^T H_k \uktrue +\frac{1}{2} \nu\sigma(\|c_k\|-\|c_k+J_kv_k\|)] \\ &\le \bE_k[\phi(x_k,\taum) - \phi(x_k+\ga_kd_k, \taum)] \\ 
    & \qquad + \beta_k^2 (\taum\kappa_g \kappa_4 + (\taum L+\Gamma)\tilde \kappa_u + \mo{2}\theta^2 (\taum L+\Gamma)\go^2\kappa_J^2\kappa_c^2 + \gamma_r) \\  &= \bE_k[\phi(x_k,\taum) - \phi(x_k+\ga_kd_k, \taum)] + \tilde\kappa_1\beta_k^2.
\end{align*}
Thus, summing from $k=0$ to $K-1$, and substituting $\beta_k = \eta / \sqrt{K}$, we have 
\begin{equation}
    \sum_{k=0}^{K-1} \bE[\nu\beta_k\taum (\uktrue)^TH_k\uktrue + \frac{1}{2}\nu\sigma (\|c_k\|-\|c_k+J_kv_k\|)] \le \tilde\kappa_2.
\end{equation}
\end{proof}

Theorem \ref{thm: inexact main theorem} \mo{is an essentially parallel result to} Theorem \ref{thm: taum exists convergence}. The difference lies mainly in the constants, \mo{as} $\tilde\kappa_1$ includes additional terms that \mo{are dependent on} $\gamma_\rho$ and $\gamma_r$. This is natural in that our Termination Test \ref{tt: termination test on residual norms} forces the errors of residuals below a threshold that is controlled by $\beta_k$. 

For the sake of brevity, we only list the complexity results of the inexact algorithm under two different cases. Their proofs follow \mo{essentially in the same manner as those of} Corollary \ref{cor: Jkck2 complexity}, \ref{cor: reduced gradient complexity}, and Theorem \ref{thm: LICQ hold complexity}.

\begin{cor}
    Under the same assumptions as in Theorem \ref{thm: inexact main theorem}, we have the following results:
    \begin{enumerate}
        \item When no constraint qualification is assumed, we have 
        \begin{gather*}
            \frac{1}{K}\sum_{k=0}^{K-1} \bE [\|J_k^T c_k\|^2] \le \frac{2\kappa_c\tilde\kappa_2}{\nu \sigma \kappa_v K}, \,\, \text{and} \\\quad \frac{1}{K}\sum_{k=0}^{K-1} \mo{\bE[\|\nabla f(x_k)+J_k^T \yktrue\|^2]} \le \ \frac{\mo{2}\kappa_H^2\tilde\kappa_2}{\nu\eta\taum\zeta\sqrt{K}} + \frac{4\kappa_H^2 \kappa_c\kappa_0\tilde\kappa_2}{\nu \sigma\kappa_v \zeta K}.
        \end{gather*}
        \item When LICQ holds for all iterate $k$, we have
        \begin{gather*}
            \frac{1}{K}\sum_{k=0}^{K-1} \bE [\|c_k\|] \le \frac{2\tilde\kappa_2}{\nu \sigma\kappa_v\sigma_J^2 K}, \,\, \text{and} \\  \frac{1}{K}\sum_{k=0}^{K-1} \bE [\| \nabla f(x_k) + J_k^T\yktrue\|^2] \le \frac{\mo{2}\kappa_H^2\tilde\kappa_2}{\nu \eta \taum \zeta\sqrt{K}} + \mo{\frac{4 \tilde\kappa_2 \kappa_H^2 \kappa_0 \kappa_J^2 \kappa_c}{\nu\sigma\kappa_v\sigma_J^2\zeta K}}. 
        \end{gather*}        
   \end{enumerate}
\end{cor}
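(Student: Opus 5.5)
The plan is to start from the summed bound of Theorem \ref{thm: inexact main theorem},
\begin{equation*}
\sum_{k=0}^{K-1} \bE\Big[\nu\beta_k\taum (\uktrue)^TH_k\uktrue + \tfrac{1}{2}\nu\sigma (\|c_k\|-\|c_k+J_kv_k\|)\Big] \le \tilde\kappa_2,
\end{equation*}
and repeat the arguments of Corollaries \ref{cor: Jkck2 complexity}, \ref{cor: reduced gradient complexity} and Theorem \ref{thm: LICQ hold complexity}, with $\kappa_2$ replaced by $\tilde\kappa_2$. Both summands are nonnegative: the first by Assumption \ref{ass: assumption on Hk}, since $\uktrue\in\text{Null}(J_k)$, and the second by the Cauchy decrease condition \cref{eq: cauchy decreasee condition}. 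So each term can be isolated by discarding the other. This is important, because in this section the inexact $u_k$ enters only through $d_k$. The quantities $\uktrue$ and $\yktrue$ appearing in the bound are still the exact "true" ones, so Lemmas \ref{lemma: constraint model reduction}, \ref{lemma: uHutrue with KKT measure no LICQ} and \ref{lemma: model reduction and cknorm} apply verbatim.

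\textbf{Item 1, feasibility.} First drop the $u$-term. When $\|c_k\|>0$, Lemma \ref{lemma: constraint model reduction} together with $\|c_k\|\le\kappa_c$ gives $\|c_k\|-\|c_k+J_kv_k\|\ge \kappa_v\kappa_c^{-1}\|J_k^Tc_k\|^2$; when $\|c_k\|=0$ the inequality is trivial. Hence
\begin{equation*}
\tfrac12\nu\sigma\kappa_v\kappa_c^{-1}\sum_k\bE\|J_k^Tc_k\|^2\le\tilde\kappa_2,
\end{equation*}
and dividing by $K$ yields the first bound.

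\textbf{Item 1, optimality.} Next drop the constraint term and apply Lemma \ref{lemma: uHutrue with KKT measure no LICQ} inside the $u$-term, using $\beta_k=\eta/\sqrt K$:
\begin{equation*}
\frac{\nu\eta\taum\zeta}{2\kappa_H^2\sqrt K}\sum_k\bE\|\nabla f(x_k)+J_k^T\yktrue\|^2\le\tilde\kappa_2+\frac{\nu\eta\taum\kappa_0}{\sqrt K}\sum_k\bE\|J_k^Tc_k\|^2 .
\end{equation*}
Multiply through by $2\kappa_H^2\sqrt K/(\nu\eta\taum\zeta K)$. The remaining sum is controlled by the feasibility bound just proved, giving a correction term $\frac{2\kappa_H^2\kappa_0}{\zeta}\cdot\frac{2\kappa_c\tilde\kappa_2}{\nu\sigma\kappa_v K}$, which is exactly the stated second term.

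\textbf{Item 2.} Under Assumption \ref{ass: LICQ}, Lemma \ref{lemma: model reduction and cknorm} gives $\|c_k\|-\|c_k+J_kv_k\|\ge\kappa_v\sigma_J^2\|c_k\|$, so $\frac12\nu\sigma\kappa_v\sigma_J^2\sum_k\bE\|c_k\|\le\tilde\kappa_2$, which is the first LICQ bound. For the optimality bound, I would proceed as in Theorem \ref{thm: LICQ hold complexity}: bound $\|J_k^Tc_k\|^2\le\kappa_J^2\kappa_c\|c_k\|$, feed it into the same Lemma \ref{lemma: uHutrue with KKT measure no LICQ} inequality, and then use the $\bE\|c_k\|$ bound. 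This produces the constant $4\tilde\kappa_2\kappa_H^2\kappa_0\kappa_J^2\kappa_c/(\nu\sigma\kappa_v\sigma_J^2\zeta K)$.

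\textbf{Main subtlety.} The one point needing care is that Item 2 uses $\taum$. The theorem's hypothesis already includes Assumption \ref{ass:taumin}, so no issue arises there. One should nevertheless note that Lemma \ref{lemma: taumin exists} still applies in the inexact setting, since the merit-parameter rule \cref{eq: tau_k trial generating} is stated purely in terms of the exact "true" quantities. Apart from this, everything is routine bookkeeping of constants. The factor $\nu$ (rather than $\alpha_k$) is already built into Theorem \ref{thm: inexact main theorem}, so no lower bound $\alpha_k\ge\nu$ needs to be reinvoked.
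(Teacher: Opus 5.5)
Your proposal is correct and takes the same route as the paper, which simply states that the proofs follow those of Corollaries~\ref{cor: Jkck2 complexity}, \ref{cor: reduced gradient complexity} and Theorem~\ref{thm: LICQ hold complexity} with $\kappa_2$ replaced by $\tilde\kappa_2$. Your constant bookkeeping checks out, and your remark that Lemmas~\ref{lemma: constraint model reduction}, \ref{lemma: uHutrue with KKT measure no LICQ} and \ref{lemma: model reduction and cknorm} involve only the exact quantities $v_k$, $\uktrue$, $\yktrue$ (so they transfer unchanged to the inexact iterates) is precisely what justifies that substitution.
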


Again, we need at most $\mathcal{O}(\epsilon_c^{-2})$ and $\mathcal{O}(\epsilon_c^{-1})$ iterations to reach a $\epsilon_c$-stationary feasible point for case I and case II, respectively. The only difference lies in the constant. For example, the inexact algorithm needs roughly $(\tilde\kappa_2 - \kappa_2)\times 2(\kappa_c/\nu\sigma\kappa_v)$ more iterations in the worst case. \mo{With respect to the gradient of the Lagrangian}, both cases yield the same $\mathcal{O}(\epsilon_L^{-4})$ complexity.

\mo{Next, we turn our attention to the case where $\taum$ does not exist and provide a lemma parallel to \cref{lemma:c_k-c_k+1new}.}

\begin{lemma} \label{lemma:c_k-c_k+1inexact}
    Let $\nu$ be defined by \eqref{eq:nuvanishingtau}. Then, under Assumptions \ref{assumption1}, \ref{ass: assumption on Hk}, \ref{ass2}, \ref{ass: iterative solver}, and \ref{ass: uniform lower singular value}, it follows for all $k\in \bN$ that
    \begin{equation}\label{eq: inexact tauk vanish summing new}
        \bE_k [\|c_k\| - \|c (x_k+\alpha_k d_k)\|] \ge \frac{1}{2}\nu\kappa_v\kappa_c^{-1} \bE_k \|J_k^Tc_k\|^2 - \tilde\kappa_3 \beta_k^2,
    \end{equation}
    where we define 
    $$
    \tilde\kappa_3 := \Big[\Gamma (\frac{1}{2}\tilde\kappa_u + \theta^2\kappa_J^2 \kappa_c^2 \omega^2) + \gamma_r\Big].
    $$
\end{lemma}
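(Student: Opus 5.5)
We follow the proof of \cref{lemma:c_k-c_k+1new}, tracking one new error term: with an inexact $u_k$, $J_k d_k = J_k v_k + \beta_k r_k$ instead of $J_k v_k$. First, apply the $\Gamma$-Lipschitz continuity of $J$ and $\alpha_k \le 1$, as in the proof of \cref{lemma: generic descent}. This gives
\[
\|c(x_k+\alpha_k d_k)\| \le \alpha_k\|c_k + J_k v_k + \beta_k r_k\| + (1-\alpha_k)\|c_k\| + \tfrac{\Gamma}{2}\alpha_k^2\|d_k\|^2 .
\]
The triangle inequality gives $\|c_k+J_kv_k+\beta_kr_k\| \le \|c_k+J_kv_k\| + \beta_k\|r_k\|$. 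Termination Test \ref{tt: termination test on residual norms} gives $\|r_k\| \le \gamma_r\beta_k$, so with $\alpha_k\le 1$ the extra term is at most $\gamma_r\beta_k^2$. Hence
\[
\|c_k\| - \|c(x_k+\alpha_kd_k)\| \ge \alpha_k(\|c_k\|-\|c_k+J_kv_k\|) - \gamma_r\beta_k^2 - \tfrac{\Gamma}{2}\alpha_k^2\|d_k\|^2 .
\]

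Next, bound the first term below by $\nu\kappa_v\kappa_c^{-1}\|J_k^Tc_k\|^2$, using $\alpha_k\ge\nu$ and \cref{lemma: constraint model reduction} (with $\|c_k\|\le\kappa_c$; the case $c_k=0$ is trivial). For $\|d_k\|^2$, the components $u_k$ and $v_k$ are no longer orthogonal, since $u_k$ has a range-space part $J_k^+r_k$. I will therefore use $\|d_k\|^2\le 2(\beta_k^2\|u_k\|^2+\|v_k\|^2)$, so that
\[
\tfrac{\Gamma}{2}\alpha_k^2\|d_k\|^2 \le \Gamma\alpha_k^2\|v_k\|^2 + \Gamma\beta_k^2\|u_k\|^2 .
\]

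For the $v_k$ term, use $\alpha_k^2\le 2\nu^2+2\theta^2\beta_k^2$ and $\|v_k\|\le\omega\|J_k^Tc_k\|\le\omega\kappa_J\kappa_c$. The $\beta_k^2$ part is bounded by a constant times $\Gamma\theta^2\kappa_J^2\kappa_c^2\omega^2\beta_k^2$. The $\nu^2$ part, $2\Gamma\nu^2\omega^2\|J_k^Tc_k\|^2$, must be absorbed into half of the leading term, which requires $2\Gamma\nu\omega^2\le\tfrac12\kappa_v\kappa_c^{-1}$.

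This absorption is the main obstacle. The choice \eqref{eq:nuvanishingtau} only gives $\Gamma\nu\omega^2\le\tfrac12\kappa_v\kappa_c^{-1}$, so the factor-of-2 losses from non-orthogonality may not fit. I would first handle $\alpha_k^2$ more carefully, as in \cref{lemma:c_k-c_k+1new}. If that fails, I would avoid squaring the sum. Instead, estimate the cross term $2\beta_k v_k^Tu_k = 2\beta_k v_k^T J_k^+ r_k$ directly: $v_k\in\mathrm{Range}(J_k^T)$ is orthogonal to the null-space part of $u_k$, so only the range part $J_k^+r_k$ contributes. By Assumption \ref{ass: uniform lower singular value} this cross term is at most $2\beta_k^2\omega\kappa_J\kappa_c\bar\sigma_{\min}^{-1}\gamma_r$, a $\beta_k^2$ term. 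Keeping $\|v_k\|^2$ with coefficient one then reproduces exactly the exact-case absorption. The $\Gamma$ factors in $\tilde\kappa_3$ suggest that constants are tracked loosely, so either route should land at the stated form up to bookkeeping.

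Finally, take conditional expectations. By Lemma \ref{lem: upper bound on inexact u_k norm}, $\bE_k\|u_k\|^2\le\tilde\kappa_u$, so the $u_k$ term contributes $\Gamma\cdot\tfrac12\tilde\kappa_u\beta_k^2$ (or a constant multiple of it). Combining this with the $\gamma_r\beta_k^2$ and $\Gamma\theta^2\kappa_J^2\kappa_c^2\omega^2\beta_k^2$ terms gives $\tilde\kappa_3\beta_k^2$, which proves \eqref{eq: inexact tauk vanish summing new}.
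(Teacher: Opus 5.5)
Your argument is the paper's route. The paper's proof is only a pointer: follow \cref{lemma:c_k-c_k+1new}, using the techniques of \cref{thm: inexact main theorem}. Those techniques are the extra $\alpha_k\beta_k\|r_k\|\le\gamma_r\beta_k^2$ coming from $J_kd_k=J_kv_k+\beta_kr_k$, and the bound $\bE_k\|u_k\|^2\le\tilde\kappa_u$. You are right that the delicate point is $u_k\not\perp v_k$, which that pointer passes over. The stated $\tilde\kappa_3$ is exactly the exact-case constant with $\tilde\kappa_u$ and $\gamma_r$ substituted, i.e.\ what one gets by pretending $\|d_k\|^2=\beta_k^2\|u_k\|^2+\|v_k\|^2$.

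The gap is your last step. Neither of your routes reaches the stated constant, and since $\tilde\kappa_3$ is defined explicitly, ``up to bookkeeping'' does not close it. Your cross-term route is valid, but it gives $\tilde\kappa_3+\Gamma\omega\kappa_J\kappa_c\bar\sigma_{\min}^{-1}\gamma_r$. The route via $\|d_k\|^2\le2(\beta_k^2\|u_k\|^2+\|v_k\|^2)$ puts coefficient $\Gamma$ rather than $\tfrac12\Gamma$ on $\tilde\kappa_u$. With your crude bound $\alpha_k^2\le2\nu^2+2\theta^2\beta_k^2$, it also needs a smaller $\nu$ than \eqref{eq:nuvanishingtau}. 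So as written you prove the inequality only with a strictly larger constant. That is harmless for the $\mathcal{O}(\epsilon_c^{-2})$ rate, but it is not the lemma as stated.

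To get exactly $\tilde\kappa_3$, first note that your $\nu$-obstacle comes from the crude $\alpha_k^2$ bound, not from non-orthogonality. Write $\alpha_k=\nu+t$ with $0\le t\le\theta\beta_k$, and keep $t$ in the leading term as well. Since \eqref{eq:nuvanishingtau} means $\kappa_v\kappa_c^{-1}\ge2\Gamma\omega^2\nu$, you get
\[
\alpha_k\kappa_v\kappa_c^{-1}\|J_k^Tc_k\|^2-\Gamma\alpha_k^2\|v_k\|^2\ \ge\ \tfrac12\nu\kappa_v\kappa_c^{-1}\|J_k^Tc_k\|^2-\Gamma\theta^2\kappa_J^2\kappa_c^2\omega^2\beta_k^2 .
\]
Indeed, the difference is at least $\|J_k^Tc_k\|^2\big[(\tfrac12\nu+t)\kappa_v\kappa_c^{-1}-\Gamma\omega^2(\nu^2+2\nu t)\big]\ge0$. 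So doubling $\|v_k\|^2$ costs nothing.

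Next, split orthogonally using $u_k=u_{k,1}+u_{k,2}$ from \eqref{eq: uk1 and uk2}, with $v_k,u_{k,1}\in\mathrm{Range}(J_k^T)$ and $u_{k,2}\in\mathrm{Null}(J_k)$:
\[
\|d_k\|^2=\|v_k+\beta_ku_{k,1}\|^2+\beta_k^2\|u_{k,2}\|^2\le2\|v_k\|^2+\beta_k^2\big(\|u_k\|^2+\|u_{k,1}\|^2\big).
\]
Finally, since $\uktrue\in\mathrm{Null}(J_k)$,
\[
\|u_k\|^2+\|u_{k,1}\|^2=2\|u_{k,1}\|^2+\|u_{k,2}\|^2\le2\|\uktrue\|^2+2\|u_k-\uktrue\|^2 .
\]
The argument of \cref{lem: upper bound on inexact u_k norm} bounds the conditional expectation of exactly this quantity by $\tilde\kappa_u$. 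Using $\alpha_k\le1$ and adding your $\gamma_r\beta_k^2$ term then yields precisely $\tilde\kappa_3\beta_k^2$.
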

\begin{proof}
    The proof follows directly by the proof of \cref{lemma:c_k-c_k+1new} combined with the techniques used in the proof of \cref{thm: inexact main theorem}.
\end{proof}

Given this, we can present our final complexity result.

\begin{theorem}\label{thm: taum not exist inexact}
    Let Assumptions \ref{assumption1}, \ref{ass: assumption on Hk}, \ref{ass2}, \ref{ass: iterative solver}, and \ref{ass: uniform lower singular value} hold for all $k\in \bN$. Set $\beta_k = \eta/\sqrt{K}$, $\ga_k$ as in \cref{alpha choice: interval}, and $\nu$ as in \eqref{eq:nuvanishingtau}. Then for any $K\in \bN$,
    \begin{equation} \label{eq: J_kc_k no taumin, inexact}
        \frac{1}{K} \sum_{k=0}^{K-1} \bE[\|J_k^Tc_k\|^2] \le \frac{2 \kappa_c \left(\|c_0\| + \eta^2 \tilde\kappa_3\right)}{\nu \kappa_v K}.
    \end{equation}
\end{theorem}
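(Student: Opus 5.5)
The plan is to follow the proof of \cref{thm: taum not exist new} verbatim, with \cref{lemma:c_k-c_k+1inexact} playing the role of \cref{lemma:c_k-c_k+1new}. No merit parameter appears anywhere: the argument uses only the constraint violation $\|c(x)\|$ as a Lyapunov function. This is why Assumption \ref{ass:taumin} is not needed.

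\textbf{Step 1 (one-step decrease).} Invoke \eqref{eq: inexact tauk vanish summing new} at each $k\in\{0,\dots,K-1\}$, noting that $x_{k+1}=x_k+\alpha_k d_k$. Since $v_k$ is deterministic given $x_k$, the quantity $\|J_k^Tc_k\|^2$ is $x_k$-measurable, so $\bE_k\|J_k^Tc_k\|^2=\|J_k^Tc_k\|^2$. This gives
\begin{equation*}
\tfrac12\nu\kappa_v\kappa_c^{-1}\|J_k^Tc_k\|^2 \le \bE_k[\|c_k\|-\|c_{k+1}\|] + \tilde\kappa_3\beta_k^2 .
\end{equation*}

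\textbf{Step 2 (sum and telescope).} Take total expectations using the tower property, and sum over $k=0,\dots,K-1$. The differences $\bE\|c_k\|-\bE\|c_{k+1}\|$ telescope to $\|c_0\|-\bE\|c_K\|\le\|c_0\|$, since $x_0$ is deterministic and $\|c_K\|\ge 0$. With $\beta_k=\eta/\sqrt K$, we have $\sum_{k=0}^{K-1}\tilde\kappa_3\beta_k^2=\eta^2\tilde\kappa_3$.

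\textbf{Step 3 (rearrange).} Multiply by $2\kappa_c/(\nu\kappa_v)$ and divide by $K$; this yields \eqref{eq: J_kc_k no taumin, inexact}.

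\textbf{Where the real work lies.} The only substantive content is the lemma being invoked, and since it is stated earlier I may assume it. To be confident it applies under exactly these hypotheses, I would check its ingredients.
\begin{itemize}
\item With the inexact step, $J_kd_k=J_kv_k+\beta_kr_k$. The triangle inequality then gives $\|c_k+J_kd_k\|\le\|c_k+J_kv_k\|+\beta_k\|r_k\|$. Termination Test \ref{tt: termination test on residual norms} bounds the extra term by $\alpha_k\beta_k\gamma_r\beta_k\le\gamma_r\beta_k^2$, using $\alpha_k\le1$. This is the source of the $\gamma_r$ in $\tilde\kappa_3$.
\item Orthogonality of $u_k$ and $v_k$ is lost. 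I would therefore use $\|d_k\|^2\le2\beta_k^2\|u_k\|^2+2\|v_k\|^2$, together with \cref{lem: upper bound on inexact u_k norm} for $\bE_k\|u_k\|^2\le\tilde\kappa_u$. This requires Assumption \ref{ass: uniform lower singular value}.
\item The $\|v_k\|^2$ terms are absorbed via $\|v_k\|\le\omega\|J_k^Tc_k\|$, the choice of $\nu$ in \eqref{eq:nuvanishingtau}, and \cref{lemma: constraint model reduction}.
\end{itemize}
The factor of $2$ from the non-orthogonal split may demand a slightly smaller $\nu$ (or a different constant) than \eqref{eq:nuvanishingtau} provides. If so, that constant bookkeeping is the main obstacle. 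It does not affect the $\mathcal{O}(1/K)$ rate.
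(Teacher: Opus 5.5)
Your proposal is correct and is exactly the paper's argument: apply \cref{lemma:c_k-c_k+1inexact} at each $k$, take total expectations, telescope using $\|c_0\|-\bE\|c_K\|\le\|c_0\|$, use $\sum_{k=0}^{K-1}\beta_k^2=\eta^2$, and rearrange. Your worry about the factor of $2$ concerns only the lemma, not this theorem, and it does not force a smaller $\nu$. Since $v_k\in\text{Range}(J_k^T)$, only the component $J_k^+r_k$ of $u_k$ interacts with $v_k$, so $\|d_k\|^2=\beta_k^2\|u_k\|^2+2\beta_k(J_k^+r_k)^Tv_k+\|v_k\|^2$, with cross term at most $2\beta_k^2\bar\sigma_{\min}^{-1}\gamma_r\omega\kappa_J\kappa_c$; hence \eqref{eq:nuvanishingtau} suffices, at the cost of adding the constant $\Gamma\bar\sigma_{\min}^{-1}\gamma_r\omega\kappa_J\kappa_c$ to $\tilde\kappa_3$.
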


\begin{proof}
    The proof follows via an identical argument as the proof of \cref{thm: taum not exist new}.
\end{proof}

Clearly, this result parallels that of \cref{thm: taum not exist new} and proves a worst-case complexity of $\mathcal{O}(\epsilon_c^{-2})$ with respect to our infeasibility measure. Thus, all of our complexity results translate directly to the setting of inexact solves for $u_k$, under Assumptions \ref{ass: iterative solver} and \ref{ass: uniform lower singular value}.


\section{Numerical Experiments}\label{sec: numerical experiments}

In this section, we validate the performance of our inexact two-stepsize SQP (ITSQP) method with numerical experiments. We tested ITSQP together with the original stochastic SQP method (SSQP) \cite{berahas2021sequential} and a step-lengthening stochastic SQP method (SSQPL) on a subset of the equality constrained optimization problems from the CUTEst collection \cite{gratton2025s2mpj}. Since the singularity of Jacobians is detected in more than half of the problems, we use the step decomposition strategy for all three algorithms to make the results comparable.

We use an experiment setup similar to \cite{berahas2021sequential}, with a total of 110 equality constrained problems. Multiple levels of stochasticity (noise that follows multivariate normal distribution $\cN(0, \epsilon_NI)$, where $\epsilon_N\in\{10^{-4}, 10^{-3}, 10^{-2}, 10^{-1}\}$) are injected into the gradient estimates to simulate the stochastic gradient. To ``tune" the algorithms, we tested 5 different fixed scaling stepsizes $\beta \in \{10^{-4}, 10^{-3}, 10^{-2}, 10^{-1},1\}$ under 15 different seeds. Therefore, a total of 440(=110$\times$4) problems are tested, and for each problem, every algorithm is run 75(=$5\times 15$) times with different stepsizes. For the stepsize $\alpha_k$, we use the default stepsize \mo{selection} scheme in \cite{berahas2021sequential} for SSQP, a step-lengthening stepsize for SSQPL \at{and the adaptive $\ga_k$ proposed in \cite[Algorithm 4.1]{o2024two} for ITSQP}.

For each run of the algorithm, we give a budget of 10,000 iterations and report the best iterate with the following scheme: for any iterate $k$, if $\|c_k\|_\infty \le 10^{-6}$, we treat it as a feasible point. We then pick the best iterate with the lowest KKT error $\|\nabla f_k + J_k^T\yktrue\|_\infty$ from all feasible points. If $\|\nabla f_k + J_k^T\yktrue\|_\infty\le 10^{-4}$ \mo{on an iterate satisfying $\|c_k\|_\infty \le 10^{-6}$}, we terminate the algorithm. If such feasible points do not exist, we report the first-order measure of the most feasible point (one with the lowest $\|c_k\|_\infty$). This is commonly used in the stochastic SQP literature, such as \cite{curtis2024stochastic} and \cite{o2024two}.

The figures contain the KKT errors and infeasibility errors of three algorithms tested under different noise levels. For each of the 440 problems, we compared the average infeasibility errors and KKT errors over five different stepsizes and chose the one stepsize that yields the best results to plot the figures. It is \at{$\beta_k = 1$} for SSQP and SSQPL, and $\beta_k = 10^{-3}$ for ITSQP. 

We can conclude from the figures that our ITSQP method outperforms the SSQP and SSQPL methods in terms of the infeasibility measure across all noise levels. This advantage becomes increasingly pronounced as the noise level increases, highlighting the superior convergence rate of our two-stepsize algorithm.

The SSQP and SSQPL methods compute the merit parameter sequence $\tau_k$ and choose the stepsize based on $\tau_k$, which appears to help them achieve better KKT convergence. The higher first-order errors may also be attributed to the more pessimistic stepsize $\alpha_k$ we use in our algorithm.

\begin{figure}[htbp]
    \centering
    \includegraphics[width=\textwidth]{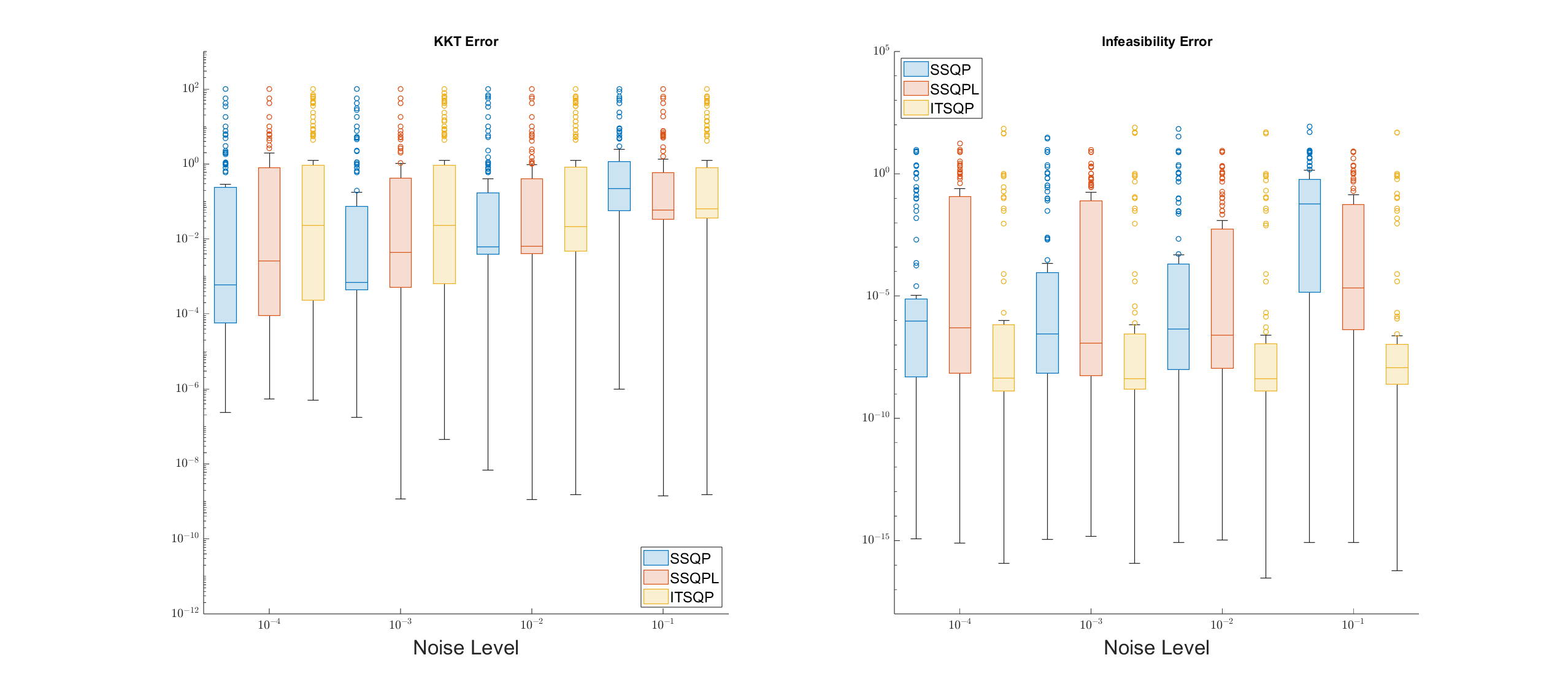}
    \caption{Comparison of KKT error and infeasibility error under different noise levels}
    \label{fig: errors comparison}
\end{figure}

\section{Conclusion}\label{sec: conclusion}

In this paper, we proposed and analyzed an inexact two-stepsize stochastic SQP method that can handle the possibility of rank deficient Jacobians. We prove the first-known $\mathcal{O}(\epsilon_c^{-2})$ complexity for the infeasibility measure $\|J_k^Tc_k\|$ to fall below $\epsilon_c$ under mild assumptions, and an improved $\mathcal{O}(\epsilon_c^{-1})$ complexity when LICQ holds, which matches the best known result in the literature. These results also hold in the case where the tangential component is computed inexactly. Numerical experiments also show that our algorithm converges more efficiently in terms of the infeasibility measure than those without the two-stepsize scheme.

Extending similar SQP strategies to the more general, inequality constrained setting, remains an open problem. In addition, extending this method to the case of stochastic equality constraints is a natural future direction.



\bibliographystyle{siamplain}
\bibliography{references}
\end{document}